\newtheorem{theorem}{Theorem}[section]
\newtheorem{lemma}[theorem]{Lemma}
\newtheorem{corollary}[theorem]{Corollary}
\newtheorem{proposition}[theorem]{Proposition}
\theoremstyle{definition}
\newtheorem{example}[theorem]{Example}
\theoremstyle{remark}
\newtheorem{remark}[theorem]{Remark}
\numberwithin{equation}{section}
\begin{document}

\title[Fractional smoothness of distributions of polynomials]
{Fractional smoothness of distributions of polynomials
and a fractional analog of the Hardy--Landau--Littlewood inequality}

\author[Vladimir I. Bogachev et al.]{Vladimir~I.~Bogachev}
\address{National Research University Higher School of Economics, Moscow, Russia}
\curraddr{}
\email{vibogach@mail.ru}
\thanks{This work has been supported by the Russian Science Foundation Grant 14-11-00196 at
Lomonosov Moscow State University.}

\author[]{Egor~D.~Kosov}
\address{Faculty of Mechanics and Mathematics,
Moscow State University, Moscow, 119991 Russia}
\curraddr{}
\email{ked\_2006@mail.ru}
\thanks{}

\author[]{Georgii~I.~Zelenov}
\address{Faculty of Mechanics and Mathematics,
Moscow State University, Moscow, 119991 Russia}
\curraddr{}
\email{zelenovyur@gmail.com}
\thanks{}

\date{}

\maketitle

\begin{abstract}
We prove that the distribution density of any  non-constant polynomial
$f(\xi_1,\xi_2,\ldots)$
of degree~$d$ in independent standard Gaussian random variables~$\xi$
(possibly, in infinitely many variables)
always belongs to the
Nikol'skii--Besov  space~$B^{1/d}(\mathbb{R}^1)$
of fractional order~$1/d$ (and this order is best possible),
and an analogous result holds
for polynomial mappings with values in~$\mathbb{R}^k$.

Our second main result is
an upper bound on the total variation distance
between two probability measures on $\mathbb{R}^k$ via
the Kantorovich distance between them and
a suitable Nikol'skii--Besov  norm of their difference.

As an application we consider the total variation distance between the distributions
of two random $k$-dimensional vectors composed of
polynomials of degree $d$ in Gaussian random variables and show that
this distance is estimated
by a fractional power of the Kantorovich distance with an exponent depending only
on $d$ and $k$, but not on the number of variables of the considered polynomials.
\end{abstract}

\noindent
Keywords: Distribution of a polynomial,
Nikol'skii--Besov class, Hardy--Landau--Littlewood inequality,
total variation norm, Kantorovich norm

\noindent
AMS Subject Classification: 60E05, 60E15, 28C20, 60F99

\section{Introduction}\label{intro}

This paper is concerned with distributions of polynomials in Gaussian random variables
and estimates in the total variation distance between measures with densities from fractional
Nikol'skii--Besov  classes.

Our first main  result (presented in Section 4 and Section 5)
states that the distribution of any non-constant polynomial
of degree~$d$
(possibly, in infinitely many variables) with respect to a Gaus\-sian measure always belongs to the
Nikol'skii--Besov  space~$B^{1/d}(\mathbb{R}^1)$
(so that the order of smoothness depends only on the degree
of this polynomial and this order is best possible) and that
an analogous result holds for multidimensional
polynomial mappings. It is well-known that a non-constant polynomial in Gaussian
random variables has a distribution density, however, in many cases this density
is not locally bounded (which happens already for the square
of the standard Gaussian random variable), hence does not belong to an
integer order Sobolev class.
The established fractional regularity is the first general result in this direction.

Our second main result gives
new lower bounds for the Kantorovich distance
(all definitions are given in Section~2) between
probability measures on~$\mathbb{R}^k$; these bounds can be also viewed as upper bounds
for the total variation distance.
Our principal new result is a fractional
multidimensional analog of the classical Hardy--Landau--Littlewood inequality.
We obtain an upper bound on the total variation
distance between two probability measures on $\mathbb{R}^k$ in terms of
the Kantorovich distance between them and a suitable Nikol'skii--Besov  norm of their difference.
A~particular case of our inequality is the estimate of the total variation norm via
the Kantorovich  norm and the BV-norm established in~\cite{BSh},~\cite{BWSh}.
The classical Hardy--Landau--Littlewood result \cite{HLL} states that
$$
\|f'\|_1^2 \le C \|f\|_1\|f''\|_1
$$
for every integrable function $f$ on the real line with two integrable derivatives.
A~multidimensional analog of this bound was obtained in
\cite{BSh}, \cite{BWSh} (see also \cite{Kohn} and~\cite{Seis}) in the following form:
for every $k$, there is a number $C(k)$ such that for every two probability measures
$\mu$ and $\nu$ on $\mathbb{R}^k$ with densities $\varrho_\mu$ and $\varrho_\nu$
 belonging to the class $BV$ of functions of bounded variation one has
\begin{equation}\label{mHLL}
d_{{\rm TV}} (\mu,\nu)^2\le C(k) d_{{\rm K}} (\mu,\nu) \|D\varrho_\mu-D\varrho_\nu\|_{{\rm TV}} ,
\end{equation}
where $d_{{\rm TV}}$ is the total variation distance and
$d_{{\rm K}}$ is the Kantorovich distance (see definitions below).
In the one-dimensional case, this inequality is equivalent to the
Hardy--Landau--Littlewood inequality (and can be obtained from the latter
by passing to smooth compactly supported functions and taking for $f$
the difference of the distribution functions of the given measures).
However, this result does not  directly  apply to
polynomial images of Gaussian measures, our second main object.
For example, the distribution density of the $\chi^2$-distribution with one degree of freedom
is unbounded (it behaves like $t^{-1/2}$ near zero)
and does not belong to the class~$BV$.
For this reason, having in mind applications to distributions of polynomials (treated in Section~4 and~5),
in Section~3
we first obtain a suitable extension of (\ref{mHLL})
that involves fractional derivatives in place of gradients.
Namely, given two  Borel probability measures
$\nu, \sigma$ in the Nikol'skii--Besov  class $B^\alpha(\mathbb{R}^k)$,
$\alpha\in (0,1]$, we prove
 that
$$
\|\sigma-\nu\|_{{\rm TV}} \le
C(k,\alpha)\|\sigma - \nu\|_{B^\alpha}^{1/(1+\alpha)}d_{{\rm K}}
(\sigma, \nu)^{\alpha/(1+\alpha)}.
$$

As an application (considered in Sections~4 and~5)
we give upper bounds on the total variation distance
via the Kantorovich distance between the distributions
of two random $k$-dimensional vectors  whose  components are
polynomials of degree $d$ in Gaussian variables.
The former distance is estimated
by a certain fractional  power of the latter with an exponent depending only
on the degree $d$ and dimension $k$ of the vectors, but not on the number of variables
of these polynomials, which yields an immediate infinite-dimensional extension.
Our bounds improve the recent results of Nourdin, Nualart and Poly~\cite{NNP}.
This improvement is due to a new method based on the aforementioned fractional multidimensional
analog of the Hardy--Landau--Littlewood inequality and also involves
Nikol'skii--Besov  classes.
In this relation recall that
Nourdin and Poly \cite[Theorem 3.1]{NP}  proved the following interesting fact
(the concepts involved in the formulation are defined in the next section).
If $\{f_n\}$ is a sequence of polynomials of degree~$d$
on a space with a Gaussian measure~$\gamma$
such that their distributions $\gamma\circ f_n^{-1}$ converge weakly to an absolutely
continuous measure, then there is a number $C$ such that
$$
d_{{\rm TV}}(\gamma\circ f_n^{-1}, \gamma\circ f_m^{-1})\le
C d_{{\rm KR}}(\gamma\circ f_n^{-1}, \gamma\circ f_m^{-1})^{\theta}, \quad
\theta=\frac{1}{2d+1},
$$
where $d_{{\rm KR}}$ is the Kantorovich--Rubinstein distance
(see below; the term ``Fortet--Mourier distance'' used in  \cite{NP} is reserved
in our paper for the equivalent metric $d_{{\rm FM}}$ from the original paper~\cite{FM}).
The proof in \cite{NP} implies that,
for any two $\gamma$-measurable polynomials of degree $d$ with
variances $\sigma_f,\sigma_g$ in a given interval $(a,b)$ with $a>0$,
there is a number $C=C(a,b,d)$, depending only on $a,b,d$,  such that
$$
d_{{\rm TV}}(\gamma\circ f^{-1}, \gamma\circ g^{-1})\le
C d_{{\rm KR}}(\gamma\circ f^{-1}, \gamma\circ g^{-1})^{\frac{1}{2d+1}}.
$$
In the multidimensional case, it was shown in~\cite{NNP} that,
given a sequence of $k$-dimensional random vectors $f_n$ composed of $\gamma$-measurable polynomials
of degree $d$ such that their distributions $\gamma\circ f_n^{-1}$ converge weakly
and the expectations of the determinants of their Malliavin matrices are separated from zero,
for every
$$
\theta<\frac{1}{(k+1)(4k(d-1)+3)+1}
$$
there exists a number $C$ such that
$$
d_{{\rm TV}}(\gamma\circ f_n^{-1}, \gamma\circ f_m^{-1})\le
C d_{{\rm KR}}(\gamma\circ f_n^{-1}, \gamma\circ f_m^{-1})^{\theta}.
$$
Here we develop a different approach based on multidimensional analogs of the
Hardy--Landau--Littlewood inequality and in Section~4
we prove an estimate with a much better rate of convergence: given
$d\in\mathbb{N}$, $a,b>0$, for each positive number
$$
\theta<\frac{1}{4k(d-1)+1},
$$
there exists a number $C=C(d,a,b,\theta)$ such that, whenever $f$ and $g$ are
$k$-dimen\-sio\-nal polynomial mappings  of degree~$d$
(in an arbitrary, possibly, infinite, number of variables)
with variances of components bounded by $b$
and the expectations of the determinants of the Malliavin matrices separated from zero by~$a$,  one has
$$
d_{{\rm TV}}(\gamma\circ f^{-1}, \gamma\circ g^{-1})\le
C d_{{\rm KR}}(\gamma\circ f^{-1}, \gamma\circ g^{-1})^{\theta}.
$$

In Section 5 we consider separately the one-dimensional case
and also improve the aforementioned bound from \cite{NP} from
the power $\theta =(2d+1)^{-1}$
to nearly $(2d-1)^{-1}$, more precisely,
we establish the foregoing bound with any power $\theta< 1/(2d-1)$.
Moreover, with a worse constant we obtain a bound with the power
$\theta=1/(d+1)$, which is close to $1/d$ and the latter cannot be increased.
Finally, in Section~6 we give two related estimates connected with results from \cite{DavM} and~\cite{NP}.
The readers not interested in the infinite-dimensional case can just ignore
the corresponding statements; the essence of the paper is in finite-dimensional
results independent of the number of variables.
We thank I.~Nourdin for useful discussions.

\section{Definitions and notation}

The standard Gaussian measure $\gamma_n$ on $\mathbb{R}^n$ has density
$$
(2\pi)^{-n/2}\exp(-|x|^2/2).
$$
The image of a measure $\mu$ on a measurable space under a measurable mapping $f$
with values in $\mathbb{R}^k$ is denoted by the symbol $\mu\circ f^{-1}$
and defined by the formula
$$
\mu\circ f^{-1}(B)=\mu(f^{-1}(B))
\quad
\hbox{for every Borel set $B\subset \mathbb{R}^k$.}
$$
If $\xi_1,\ldots,\xi_n$ are independent standard Gaussian random variables,
$f\colon\, \mathbb{R}^n\to\mathbb{R}^k$,
then the law of $f(\xi_1,\ldots,\xi_n)$ is exactly~$\gamma_n\circ f^{-1}$.
If $k=1$, then the distribution density of $\mu\circ f^{-1}$ (if exists)
is the derivative of the function $t\mapsto \mu(f<t)$.

We set $\|\varphi\|_{\infty}=\sup_x |\varphi(x)|$ for any bounded function~$\varphi$
on any set.

The total variation distance $d_{{\rm TV}} (\mu,\nu)$ between two
Borel measures $\mu,\nu$ on $\mathbb{R}^k$ is generated by the norm
$$
\|\sigma\|_{{\rm TV}}  := \sup\biggl\{\int \varphi \, d\sigma, \ \varphi\in C_b^\infty(\mathbb{R}^k),
\ \|\varphi\|_\infty \le1 \biggr\}.
$$
The Kantorovich distance
(or the Kantorovich--Rubinstein distance \cite{K42}, \cite{KR},
sometimes erroneously called the Wasserstein distance)
between two Borel probability measures
$\mu,\nu$ on $\mathbb{R}^k$ with finite first moments is defined by the formula
$$
d_{{\rm K}} (\mu,\nu) := \sup\Bigl\{\int \varphi \,
d(\mu-\nu), \ \varphi\in C_b^\infty(\mathbb{R}^k),\ \|\nabla\varphi\|_\infty \le1\Bigr\}.
$$
For measures without moments, the following Fortet--Mourier distance
 can be used (see \cite[p.~277--279]{FM}; other distances including $d_{{\rm K}}$ are considered there):
$$
d_{{\rm FM}}(\mu,\nu) := \sup\biggl\{\int \varphi\, d(\mu-\nu), \
\varphi\in C_b^\infty(\mathbb{R}^k),\ \|\varphi\|_\infty+\|\nabla\varphi\|_\infty\le1\biggr\}.
$$
An equivalent distance (also called the Kantorovich--Rubinstein distance, since
it is a special case of a metric used in \cite[Theorem~1']{KR})
which is generated by equivalent norm is defined by
$$
d_{{\rm KR}}(\mu,\nu) := \sup\biggl\{\int \varphi\, d(\mu-\nu), \
\varphi\in C_b^\infty(\mathbb{R}^k),\ \|\varphi\|_\infty \le 1,\ \|\nabla\varphi\|_\infty\le1\biggr\}.
$$
These distances can be defined on general metric spaces where in place of $C_b^\infty$ one takes
the class of all bounded Lipschitz functions.
It is clear that $d_{{\rm KR}}\le d_{{\rm K}}$.

Recall  (see \cite{BIN},~\cite{Nikol77})
that the Nikol'skii--Besov class
$B^{\alpha}_{1,\infty}(\mathbb{R}^k)$ of order $\alpha\in (0,1)$
consists  of all functions
$\varrho\in L^1(\mathbb{R}^k)$ such that
$$
\|\varrho (\cdot+h)-\varrho\|_{L^1}\le C(\varrho) |h|^\alpha
 \quad \forall\, h\in \mathbb{R}^k
$$
for some number $C(\varrho)$;
it is also denoted by
$H^{\alpha}_1(\mathbb{R}^k)$ in~\cite{Nikol77},
by $B^{\alpha;1,\infty}(\mathbb{R}^k)$ in \cite{AF}
and by $\Lambda^{1,\infty}_\alpha$ in~\cite{Stein}.
This class is a particular case of the class
$H^{\alpha}_{p}(\mathbb{R}^k)$ defined similarly with the $L^p$-norm in place
of the $L^1$-norm.
Throughout we use the shortened notation $B^\alpha(\mathbb{R}^k)$.
Moreover, we use the symbol $B^1(\mathbb{R}^k)$ also for $\alpha=1$, which
corresponds to the class $BV(\mathbb{R}^d)$ of functions  of bounded variation
(which is smaller than the usual Nikol'skii--Besov class with $\alpha=1$ defined
via symmetric differences $\varrho (\cdot+h)+\varrho (\cdot-h) -2\varrho$).
However, it will be more convenient to deal with measures possessing
densities from these classic spaces rather than with functions.

Let $\nu$ be a bounded Borel measure on $\mathbb{R}^k$ and
let $\nu_h$ denote its shift by the vector~$h$:
$$
\nu_h(A)=\nu(A-h).
$$
Let $0<\alpha\le 1$. Then the class  $B^\alpha(\mathbb{R}^k)$ coincides with
the class of densities of bounded Borel measures $\nu$ on $\mathbb{R}^k$ such
that, for some number $C_\nu$,  one has
$$
\|\nu_h - \nu\|_{{\rm TV}} \le C_\nu|h|^\alpha
\quad \forall\, h\in \mathbb{R}^k.
$$
We shall identify measures with their densities and speak of measures
in the class $B^\alpha(\mathbb{R}^k)$ in this sense.

We need the following norm on the space $B^\alpha(\mathbb{R}^k)$:
$$
\|\nu\|_{B^\alpha} := \inf\{C\colon\ \|\nu - \nu_h\|_{{\rm TV}} \le C|h|^\alpha\}.
$$
It is readily seen that this is indeed a norm. However, the space $B^\alpha(\mathbb{R}^k)$ is not
complete with this norm: its standard Banach norm is given by
$
\|\nu\|_{{\rm TV}}+ \|\nu\|_{B^\alpha}.
$
The latter is larger than $\|\nu\|_{B^\alpha}$ and the two norms are not equivalent:
indeed, letting $f_n(x)=1$ on $[-n,n]$, $f_n(0)=0$ outside $[-n-1,n+1]$ and
$f_n(x)=n+1-|x|$ if $n<|x|<n+1$, we have $\|f_n\|_{L^1}\to \infty$,
$\sup_n \|f_n\|_{B^\alpha}<\infty$, where we identify $f_n$ with the measure $f_ndx$.
The situation is similar with Sobolev spaces once we use only the norm of the gradient.

The following embedding holds (see \cite[Section~6.3]{Nikol77}):
\begin{equation}\label{nikolski-emb}
B^{\alpha}(\mathbb{R}^k)\subset H^{\beta}_{p}(\mathbb{R}^k)
\subset L^p(\mathbb{R}^k),
\quad
 \beta=\kappa \alpha, \
\kappa =1-\frac{k(p-1)}{\alpha p}.
\end{equation}
Hence all measures from
$B^\alpha(\mathbb{R}^k)$ have densities in $L^p(\mathbb{R}^k)$ for all $p<k/(k-\alpha)$.
These embeddings to $L^p$ on balls (compositions with restrictions) are compact.

For infinite-dimensional extensions of our results we recall the corresponding concepts.
A  probability measure defined on the Borel $\sigma$-field of
a locally convex space $X$ is called Radon
if its value on each Borel set is the supremum of measures of compact subsets of this set.
A centered Radon Gaussian measure $\gamma$
is a Radon probability measure on~$X$ such that
every continuous linear functional $f$ on $X$ is a centered Gaussian random variable
on $(X,\gamma)$; in other words, $\gamma\circ f^{-1}$ is either Dirac's measure at
zero
or has a distribution density $(2\pi \sigma_f)^{-1/2}\exp(-x^2/(2\sigma_f))$,
where $\sigma_f=\|f\|_{L^2(\gamma)}^2$.
On complete separable metric spaces all Borel measures are automatically Radon.
Typical examples of Gaussian measures are the countable power of the standard Gaussian measure
on~$\mathbb{R}$ (defined on the countable power $\mathbb{R}^\infty$
of~$\mathbb{R}$) and the Wiener
measure (see \cite{GM} and \cite{B14} about Gaussian measures).

Let $H\subset X$ be the Cameron--Martin space of the measure $\gamma$, i.e.,
the space of all vectors $h$ such that $\gamma_h \sim \gamma$.
If $\gamma$ is the countable power of the standard Gaussian measure on the real line,
then $H$ is the usual Hilbert space~$l^2$
(of course, for the standard Gaussian measure on $\mathbb{R}^d$
the Cameron--Martin space is $\mathbb{R}^d$ itself). The Cameron--Martin space of the Wiener
measure on $C[0,1]$ is the space of absolutely continuous functions on $[0,1]$
vanishing at $0$ and having derivatives in $L^2[0,1]$.
For a general Radon Gaussian measure the Cameron--Martin space
is also a separable Hilbert space (see \cite[Theorem 3.2.7 and Proposition 2.4.6]{GM})
with the inner product $\langle\cdot,\cdot\rangle_H$ and the norm $|\cdot|_H$ defined
by
$$
|h|_H=\sup \biggl\{ l(h)\colon\, \int_X l^2\, d\gamma \le 1, \ l\in X^{*}\biggr\}.
$$

Let $\mathcal{P}^d(\gamma)$ be the closure in $L^2(\gamma)$ of the linear space
of all  functions of the form
$$
\varphi _d(l_1(x),\ldots,l_m(x)),
$$
where
$\varphi _d(t_1,\ldots,t_m)$ is a polynomial in $m$ variables of degree $d$
and $l_1,\ldots,l_m$ are continuous linear functionals on $X$
($m$~can be an arbitrary natural number).
Functions from the class $\mathcal{P}^d(\gamma)$ will be called measurable polynomials of degree~$d$.

The Wiener chaos $\mathcal{H}_d$ of order $d$ is defined as the orthogonal complement of
$\mathcal{P}^{d-1}(\gamma)$ in $\mathcal{P}^d(\gamma)$,
 $\mathcal{H}_0$ is the space of constants.
It is well-known (see, e.g.,  \cite[Section~2.9]{GM})
that $L^2(\gamma)$ is decomposed into the orthogonal sum
$
L^2(\gamma)=\bigoplus_{k=0}^\infty \mathcal{H}_k.
$

It is clear that $\mathcal{P}^d(\gamma)=\bigoplus_{k=0}^d \mathcal{H}_k$.
The subspaces $\mathcal{H}_k$ can be also defined
by means of multiple Wiener--It\^o stochastic integrals.
This interpretation can be found in \cite[Section 1.1.2]{Nualart}
or in~\cite[Section~2.11]{GM}.

Let us define Sobolev derivatives and gradients of measurable polynomials.
Let $\{e_n\}$ be an orthogonal basis in~$H$. One can assume that $\gamma$
is the countable
power of the standard Gaussian measure on~$\mathbb{R}$ and $\{e_n\}$ is the usual basis
in~$l^2$.
For any $f\in \mathcal {P}^d(\gamma)$ and $p\ge 1$, $r\in\mathbb{N}$,
one can define the Sobolev norm
$$
\| f \|_{p,r}=
\sum_{k=0}^r\biggl(
\int \Bigl( \sum_{i_1,\ldots,i_k} (\partial_{e_{i_1}}\dots\partial_{e_{i_k}} f)^2
\Bigr)^{p/2}\,d\gamma \biggr)^{1/p}
$$
and the Sobolev gradient
$$
\nabla f (x)= \sum_{k=0}^{\infty} \partial_{e_k} f(x) \, e_k,
$$
where $\partial_{e_k}$ is the partial derivative along the vector $e_k$.
One can pick a version of $f$ such that these partial derivatives exist
and $\nabla f(x)\in H$. Moreover, $\| f \|_{p,r}<\infty$ for all $p,r<\infty$.
The Sobolev class $W^{p,r}(\gamma)$ is the completion of $\mathcal {P}^d(\gamma)$
with respect to the norm~$\|\,\cdot\,\|_{p,r}$.
This class coincides also with the completion with respect to the Sobolev norm
of the space of functions of the form $f(l_1(x),\ldots,l_m(x))$, where $f\in C_b^\infty(\mathbb{R}^m)$.
In the case of $X=\mathbb R^n$ and the standard Gaussian
measure $\gamma$ one has $H=X=\mathbb R^n$ and $\nabla f$ is the gradient of $f$ in the usual sense.

As in the finite-dimensional case,
all $\gamma$-measurable polynomials have derivatives of all orders
and the following estimate (the reverse Poincar\'e inequality) holds true:
\begin{equation}\label{rv-Poin}
\int|\nabla f|^2\, d\gamma\le c(d)\int (f - m_f)^2\, d\gamma,
\quad m_f = \int\, f d\gamma.
\end{equation}
This fact follows from the equivalence of all Sobolev norms
and all $L^p$-norms on the space of measurable polynomials
of degree~$d$ (see, e.g., Example 5.3.4 in \cite{GM}).
This equivalence of $L^p$-norms gives a bound
$$
\|f\|_q \le \|f\|_p \le C(p,q,d)\|f\|_q
$$
for all measurable polynomials $f$ of degree $d$ and any $p>q\ge 1$.

For a detailed discussion of $\gamma$-measurable polynomials, see~\cite[Section~5.10]{GM}.

We need the following inequality proved by Carbery and Wright~\cite{CW}
(and also by Nazarov, Sodin, Volberg~\cite{NSV}):
there is an absolute constant $c$ such that,
 for every Gaussian measure {\rm(}more generally, for every  convex measure{\rm)}
 $\gamma$ on $\mathbb{R}^n$ and for every polynomial $f$ of degree~$d$, one has
\begin{equation}\label{cw-ineq}
\gamma(|f|\le t)\biggl(\int|f|\, d\gamma\biggr)^{1/d}\le  cd t^{1/d}, \quad t\ge 0.
\end{equation}

Generalizations to the case of $s$-concave measures are considered in~\cite{BN};
on measurable polynomials on infinite-dimensional locally convex spaces
see also \cite{ArKos}.

We also recall the following known fact about
weakly convergent sequences of distributions of $\gamma$-measurable polynomials
with the same $\gamma$ as above
(more generally, a sequence of polynomials of degree $d$ possessing  uniformly
tight distributions is bounded in all $L^p$, see, e.g., \cite[Exercise~9.8.19]{DM}).

\begin{lemma}\label{moments-weak}
Let $\{f_n\}$ be a sequence of $\gamma$-measurable polynomials of degree~$d$.
Suppose that the distributions $\mu_n=\gamma \circ f_n^{-1}$ converge weakly
to a measure $\mu$ on $\mathbb R$.
Then, for any $p\ge1$, one has convergence of moments
$$
\lim\limits_{n\to\infty} \int_{\mathbb{R}^k} |x|^p\, d\mu_n=\int_{\mathbb{R}^k} |x|^p\, d\mu.
$$
\end{lemma}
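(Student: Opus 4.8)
The plan is to upgrade weak convergence to convergence of $p$-th moments by the standard route of uniform integrability: if $\mu_n\to\mu$ weakly and $\sup_n\int|x|^{p+1}\,d\mu_n<\infty$, then the functions $x\mapsto|x|^p$ are uniformly integrable with respect to $\{\mu_n\}$ (since $\int_{|x|>M}|x|^p\,d\mu_n\le M^{-1}\sup_n\int|x|^{p+1}\,d\mu_n$), and uniform integrability together with weak convergence yields $\int|x|^p\,d\mu_n\to\int|x|^p\,d\mu$ (and in particular forces $\mu$ to have a finite $p$-th moment). Thus the whole statement reduces to the uniform bound $\sup_n\int|x|^r\,d\mu_n<\infty$ for every $r\ge1$, i.e. $\sup_n\|f_n\|_{L^r(\gamma)}<\infty$.

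To obtain this, I would first recall that a weakly convergent sequence of probability measures on $\mathbb{R}$ has a probability measure as its limit and is uniformly tight; hence there is a radius $R>0$, independent of $n$, with $\mu_n([-R,R])\ge 1/2$, that is, $\gamma(|f_n|\le R)\ge 1/2$ for all $n$. (One picks a continuity point $R_0$ of $\mu$ with $\mu((-R_0,R_0))>1/2$, uses $\mu_n((-R_0,R_0))\to\mu((-R_0,R_0))$ to get the bound for all large $n$, and enlarges $R_0$ to absorb the finitely many remaining indices.) Now the Carbery--Wright inequality (\ref{cw-ineq}), applied to $f_n$ with $t=R$, gives
\[
\tfrac12\Bigl(\int|f_n|\,d\gamma\Bigr)^{1/d}\le \gamma(|f_n|\le R)\Bigl(\int|f_n|\,d\gamma\Bigr)^{1/d}\le cd\,R^{1/d},
\]
so $\|f_n\|_{L^1(\gamma)}=\int|f_n|\,d\gamma\le(2cd)^d R$ uniformly in $n$.

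Finally, since each $f_n$ is a measurable polynomial of degree $d$, the equivalence of $L^p$-norms on $\mathcal{P}^d(\gamma)$ recalled in Section~2 gives $\|f_n\|_{L^r(\gamma)}\le C(r,1,d)\,\|f_n\|_{L^1(\gamma)}\le C(r,1,d)(2cd)^d R$ for every $r\ge1$, with a constant independent of $n$. Taking $r=p+1$ supplies the uniform $(p+1)$-th moment bound needed above, which completes the argument; the $k$-dimensional version stated with $\int_{\mathbb{R}^k}$ follows in the same way by applying this reasoning to each coordinate polynomial $f_n^{(j)}$ and summing.

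I do not anticipate a genuine obstacle: the only substantial inputs are the Carbery--Wright estimate and the hypercontractive equivalence of $L^p$-norms for polynomials of a fixed degree, both already available, and their combination converts tightness into a uniform $L^1$- hence $L^r$-bound. The points needing slight care are the choice of one radius $R$ working for all $n$ simultaneously (handled via continuity points of $\mu$ and finitely many exceptional indices) and, in the infinite-dimensional setting, invoking the versions of both ingredients valid for measurable polynomials on locally convex spaces (cf.\ \cite{ArKos}, or reduction to finite dimensions).
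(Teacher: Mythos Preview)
Your argument is correct and follows precisely the route the paper indicates: the paper does not give a detailed proof of this lemma but states it as a known fact, noting parenthetically that a sequence of polynomials of degree $d$ with uniformly tight distributions is bounded in all $L^p$ (with a reference to \cite[Exercise~9.8.19]{DM}). Your proof supplies exactly this missing detail---tightness plus Carbery--Wright gives a uniform $L^1$-bound, hypercontractivity upgrades it to all $L^r$, and then uniform integrability converts weak convergence into convergence of moments---so there is nothing to add.
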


\section{Fractional Hardy--Landau--Littlewood type estimates}

Let us give a sufficient condition for membership in the class $B^\alpha(\mathbb{R}^k)$.

\begin{proposition}\label{pro2.1}
Let $\alpha\in (0,1]$.
Let $\nu$ be a Borel measure on $\mathbb{R}^k$.
Suppose that for every function $\varphi\in C_b^\infty(\mathbb{R}^k)$
and every unit vector $e\in\mathbb{R}^k$ one has
$$
\int_{\mathbb{R}^k} \partial_e\varphi(x)\, \nu(dx)\le
C\|\varphi\|_\infty^{\alpha} \|\partial_e\varphi\|_{\infty}^{1-\alpha}.
$$
Then
$$
\|\nu_h-\nu\|_{{\rm TV}} \le 2^{1-\alpha}C|h|^\alpha \quad \forall\, h\in\mathbb{R}^k,
$$
that is, $\nu\in B^\alpha(\mathbb{R}^k)$
and $\|\nu\|_{B^\alpha} \le 2^{1-\alpha}C$.
In particular, the density of $\nu$ belongs to all $L^p(\mathbb{R}^k)$ with $p<k/(k-\alpha)$
according to {\rm(\ref{nikolski-emb})}.
\end{proposition}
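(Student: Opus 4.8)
The plan is to estimate, for an arbitrary test function $\varphi\in C_b^\infty(\mathbb{R}^k)$ with $\|\varphi\|_\infty\le1$, the number $\int\varphi\,d(\nu_h-\nu)$, and then take the supremum over such $\varphi$, which by definition is $\|\nu_h-\nu\|_{{\rm TV}}$. We may assume $h\neq0$; set $e:=h/|h|$ and $T:=|h|$, and introduce the auxiliary function
$$
\Phi(x):=\int_0^T\varphi(x+te)\,dt.
$$
Since $\varphi$ and all of its partial derivatives are bounded and continuous, the same is true of $\Phi$ (differentiation under the integral sign is legitimate on a fixed finite interval), and $\|\Phi\|_\infty\le T\|\varphi\|_\infty\le T$; thus $\Phi\in C_b^\infty(\mathbb{R}^k)$.

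The crucial observation is that the directional derivative of $\Phi$ along $e$ telescopes: after the substitution $u=t+s$ in the difference quotient one finds
$$
\partial_e\Phi(x)=\varphi(x+Te)-\varphi(x)=\varphi(x+h)-\varphi(x),
$$
so that $\|\partial_e\Phi\|_\infty\le2\|\varphi\|_\infty\le2$. Applying the hypothesis of the proposition to the pair $(\Phi,e)$ gives
$$
\int_{\mathbb{R}^k}\partial_e\Phi\,d\nu\le C\|\Phi\|_\infty^{\alpha}\|\partial_e\Phi\|_\infty^{1-\alpha}\le C\,T^{\alpha}\,2^{1-\alpha}=2^{1-\alpha}C|h|^{\alpha}.
$$
On the other hand, since $\nu_h$ is the image of the (bounded) measure $\nu$ under the shift $x\mapsto x+h$ and $\varphi$ is bounded, the left-hand side equals
$$
\int_{\mathbb{R}^k}\bigl(\varphi(x+h)-\varphi(x)\bigr)\,\nu(dx)=\int\varphi\,d\nu_h-\int\varphi\,d\nu=\int\varphi\,d(\nu_h-\nu).
$$
Hence $\int\varphi\,d(\nu_h-\nu)\le2^{1-\alpha}C|h|^{\alpha}$ for every admissible $\varphi$, and passing to the supremum over $\varphi$ yields $\|\nu_h-\nu\|_{{\rm TV}}\le2^{1-\alpha}C|h|^{\alpha}$ for all $h\in\mathbb{R}^k$; that is, $\nu\in B^\alpha(\mathbb{R}^k)$ and $\|\nu\|_{B^\alpha}\le2^{1-\alpha}C$. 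The $L^p$-integrability of the density for $p<k/(k-\alpha)$ then follows immediately from the embedding (\ref{nikolski-emb}).

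There is essentially no serious obstacle once the right auxiliary function is spotted: the whole argument rests on the identity $\partial_e\Phi=\varphi(\cdot+h)-\varphi(\cdot)$, which converts a first-order bound on $\nu$ into a statement about translation increments, and the precise constant $2^{1-\alpha}$ is forced by the two bounds $\|\Phi\|_\infty\le|h|$ and $\|\partial_e\Phi\|_\infty\le2$. The only points that require a moment's care are the finiteness of $\nu$ (implicit already in the hypothesis, and needed to split $\int\varphi\,d\nu_h-\int\varphi\,d\nu$) and the bookkeeping of the shift/pushforward conventions.
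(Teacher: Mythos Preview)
Your proof is correct and follows essentially the same approach as the paper: both introduce the auxiliary function $\Phi(x)=\int_0^{|h|}\varphi(x+te)\,dt$, observe that $\partial_e\Phi=\varphi(\cdot+h)-\varphi(\cdot)$ with $\|\Phi\|_\infty\le|h|$ and $\|\partial_e\Phi\|_\infty\le2$, and then apply the hypothesis to~$\Phi$. The only difference is organizational (you introduce $\Phi$ first and then identify $\int\partial_e\Phi\,d\nu$ with $\int\varphi\,d(\nu_h-\nu)$, whereas the paper starts from the total variation expression and arrives at~$\Phi$).
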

\begin{proof}
Let $e = |h|^{-1}h$.
It is easy to see that
\begin{align*}
\|\nu_h-\nu\|_{{\rm TV}}  &=\sup_{\varphi\in C_b^\infty(\mathbb{R}^k),\ \|\varphi\|_\infty \le1}
\int_{\mathbb{R}^k} \varphi(x) \, (\nu_h-\nu)(dx)
\\
&=\sup_{\varphi\in C_b^\infty(\mathbb{R}^k),\ \|\varphi\|_\infty \le1}
\int_{\mathbb{R}^k} [\varphi(x+h)-\varphi(x)]\, \nu(dx)
\\
&=
\sup_{\varphi\in C_b^\infty(\mathbb{R}^k),\ \|\varphi\|_\infty \le1}
\int_{\mathbb{R}^k} \int_0^{|h|}\partial_e\varphi(x+se)\, ds\, \nu(dx).
\end{align*}
Let $\varphi\in C_b^\infty(\mathbb{R}^k)$
and $\|\varphi\|_\infty\le 1$. Consider the function
$$
\Phi(x)=\int_0^{|h|}\varphi(x+se)\, ds.
$$
Note that
$
\sup_{x\in\mathbb{R}^k}|\Phi(x)|\le |h|
$
and
$$
|\partial_e\Phi(x)|=\biggl|\int_0^{|h|}\partial_e\varphi(x+se)ds\biggr|
=|\varphi(x+h)-\varphi(x)|\le2.
$$
By the assumptions of the theorem we have
$$
\int_{\mathbb{R}^k} \partial_e\Phi(x)\,  \nu(dx)\le
C|h|^\alpha 2^{1-\alpha},
$$
hence
$$
\int_{\mathbb{R}^k} \int_0^{|h|}\partial_e\varphi(x+se)\, ds \, \nu(dx)\le
C2^{1-\alpha}|h|^\alpha,
$$
which completes the proof.
\end{proof}

The following result is a fractional analog
of the multidimensional
Hardy--Lan\-dau--Littlewood inequality established in  \cite{BSh}
(in the case $\alpha=1$).

\begin{theorem}\label{t3.2}
Let $\nu, \sigma\in B^\alpha(\mathbb{R}^k)$ be two  Borel probability measures on $\mathbb{R}^k$.
Then
\begin{equation}\label{fracHLL}
\|\sigma-\nu\|_{{\rm TV}} \le
C(k,\alpha)\|\sigma - \nu\|_{B^\alpha}^{1/(1+\alpha)}d_{{\rm K}} (\sigma, \nu)^{\alpha/(1+\alpha)},
\end{equation}
where
$$C(k,\alpha)=1+\int_{\mathbb{R}^k} |x|^\alpha\, \gamma_k(dx).$$
\end{theorem}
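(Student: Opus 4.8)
The plan is to estimate $\int\varphi\,d(\sigma-\nu)$ for an arbitrary $\varphi\in C_b^\infty(\mathbb{R}^k)$ with $\|\varphi\|_\infty\le1$ and then pass to the supremum, which is $\|\sigma-\nu\|_{\rm TV}$. Write $\eta=\sigma-\nu$; this is a finite signed measure of total mass zero lying in $B^\alpha(\mathbb{R}^k)$ with $\|\eta\|_{B^\alpha}\le\|\sigma\|_{B^\alpha}+\|\nu\|_{B^\alpha}<\infty$. If $\sigma=\nu$ the claim is trivial, so assume $\|\eta\|_{B^\alpha}>0$ and $d_{\rm K}(\sigma,\nu)<\infty$. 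For a scale $t>0$ to be fixed at the end, let $g_t$ be the density of the centered Gaussian measure on $\mathbb{R}^k$ with covariance $t^2I$, so $g_t(x)=t^{-k}p(x/t)$ with $p$ the standard Gaussian density, and split
\[
\int\varphi\,d\eta=\int(\varphi-\varphi*g_t)\,d\eta+\int(\varphi*g_t)\,d\eta.
\]
The first term will be controlled by the fractional smoothness $\|\eta\|_{B^\alpha}$, the second by the Kantorovich distance, and optimising $t$ interpolates between the two, producing the exponents $1/(1+\alpha)$ and $\alpha/(1+\alpha)$.

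For the first term I would use nothing but membership in $B^\alpha$. Since $\int g_t=1$ we have $\varphi(x)-(\varphi*g_t)(x)=\int(\varphi(x)-\varphi(x-y))g_t(y)\,dy$, so by Fubini's theorem $\int(\varphi-\varphi*g_t)\,d\eta=\int g_t(y)\bigl(\int(\varphi(x)-\varphi(x-y))\,\eta(dx)\bigr)dy$. The inner integral equals $\int\varphi\,d(\eta-\eta_{-y})$, whose modulus is at most $\|\eta_{-y}-\eta\|_{\rm TV}\le\|\eta\|_{B^\alpha}|y|^\alpha$ by the very definition of $\|\cdot\|_{B^\alpha}$ (here $\|\varphi\|_\infty\le1$ is used). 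Integrating against $g_t$ and using $\int|y|^\alpha g_t(y)\,dy=t^\alpha\int|x|^\alpha\gamma_k(dx)$ gives
\[
\Bigl|\int(\varphi-\varphi*g_t)\,d\eta\Bigr|\le t^\alpha\|\eta\|_{B^\alpha}\int_{\mathbb{R}^k}|x|^\alpha\,\gamma_k(dx).
\]

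The second term is where the real point lies. The function $\psi:=\varphi*g_t$ belongs to $C_b^\infty(\mathbb{R}^k)$ with $\|\psi\|_\infty\le1$, and for a unit vector $e$ one has $\partial_e\psi=\varphi*\partial_e g_t$, hence $|\partial_e\psi(x)|\le\|\varphi\|_\infty\|\partial_e g_t\|_{L^1}$. Now $\partial_e g_t(u)=-t^{-2}\langle u,e\rangle\,g_t(u)$, and since $u\mapsto\langle u,e\rangle$ is a centered Gaussian with variance $t^2$ with respect to $g_t(u)\,du$, one gets $\|\partial_e g_t\|_{L^1}=t^{-2}\cdot t\int_{\mathbb R}|s|\,\gamma_1(ds)=t^{-1}\sqrt{2/\pi}$, which does not depend on $e$. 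Therefore $\|\nabla\psi\|_\infty\le t^{-1}\sqrt{2/\pi}\le t^{-1}$, and by the definition of $d_{\rm K}$,
\[
\int(\varphi*g_t)\,d\eta\le\|\nabla\psi\|_\infty\,d_{\rm K}(\sigma,\nu)\le t^{-1}d_{\rm K}(\sigma,\nu).
\]
The crucial observation, and the step I expect to be the main obstacle, is that it is the \emph{operator norm} of $\nabla\psi$ — i.e.\ the supremum of $\|\partial_e g_t\|_{L^1}$ over unit directions — that enters here, and that this quantity is dimension-free and bounded by $t^{-1}$; replacing it by the naive $\int|\nabla g_t|\,dx=t^{-1}\int|x|\,\gamma_k(dx)$ would spoil the constant already for $k\ge2$. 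Obtaining the Kantorovich term with the \emph{bare} coefficient $t^{-1}$ is exactly what keeps $C(k,\alpha)$ free of the dimension beyond the single moment $\int|x|^\alpha\gamma_k(dx)$.

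Combining the two estimates and taking the supremum over $\varphi$ yields, for every $t>0$,
\[
\|\sigma-\nu\|_{\rm TV}\le t^{-1}d_{\rm K}(\sigma,\nu)+t^\alpha\|\sigma-\nu\|_{B^\alpha}\int_{\mathbb{R}^k}|x|^\alpha\,\gamma_k(dx).
\]
Finally I would choose $t=\bigl(d_{\rm K}(\sigma,\nu)/\|\sigma-\nu\|_{B^\alpha}\bigr)^{1/(1+\alpha)}$. With this choice the two summands equal $d_{\rm K}(\sigma,\nu)^{\alpha/(1+\alpha)}\|\sigma-\nu\|_{B^\alpha}^{1/(1+\alpha)}$ and $\bigl(\int|x|^\alpha\gamma_k(dx)\bigr)d_{\rm K}(\sigma,\nu)^{\alpha/(1+\alpha)}\|\sigma-\nu\|_{B^\alpha}^{1/(1+\alpha)}$ respectively, and their sum is precisely \eqref{fracHLL} with $C(k,\alpha)=1+\int_{\mathbb{R}^k}|x|^\alpha\,\gamma_k(dx)$.
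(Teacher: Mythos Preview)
Your proof is correct and follows essentially the same approach as the paper: Gaussian mollification at scale $t$, then bound one piece by the $B^\alpha$-seminorm (yielding the factor $t^\alpha\int|x|^\alpha\gamma_k(dx)$) and the other by the Kantorovich distance (yielding the factor $t^{-1}$), and optimise in $t$. The only cosmetic difference is that you smooth the test function $\varphi\mapsto\varphi*g_t$ while the paper smooths the measure $\eta\mapsto\eta*\gamma_k^\varepsilon$; these are dual formulations of the same estimate, and your directional argument giving $\|\partial_e g_t\|_{L^1}=t^{-1}\sqrt{2/\pi}\le t^{-1}$ is exactly the computation behind the paper's bound $|\nabla\Phi|\le\varepsilon^{-1}$.
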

\begin{proof}
Let $\gamma_k^{\varepsilon}$ be the centered Gaussian measure on $\mathbb{R}^k$
with the covariance matrix $\varepsilon^2{\rm I}$, i.e.,
with density $(2\pi\varepsilon^2)^{-k/2}\exp(-|x|^/(2\varepsilon^2))$.
By the triangle inequality we have
\begin{equation} \label{ek3.1}
\|\sigma-\nu\|_{{\rm TV}} \le\|(\sigma - \nu)
- (\sigma - \nu)*\gamma_k^\varepsilon\|_{{\rm TV}} +
\|\sigma*\gamma_k^\varepsilon-\nu*\gamma_k^\varepsilon\|_{{\rm TV}} .
\end{equation}
For any function $\varphi\in C_b^\infty(\mathbb{R}^k)$ with $\|\varphi\|_\infty\le1$
the following equalities hold true, where all integrals in this proof
are taken over~$\mathbb{R}^k$:
\begin{align*}
\int\varphi \, d(\sigma*\gamma_k^\varepsilon-\nu*\gamma_k^\varepsilon)&=
\int \varphi(x)\int(2\pi\varepsilon^2)^{-k/2}
\exp\Bigl(-\frac{|y-x|^2}{2\varepsilon^2}\Bigr)\, (\nu-\sigma)(dy)\, dx
\\
&=\int\biggl(\int\varphi(x)(2\pi\varepsilon^2)^{-k/2}
\exp\Bigl(-\frac{|y-x|^2}{2\varepsilon^2}\Bigr)\, dx
\biggr)(\nu-\sigma)(dy).
\end{align*}
Let us consider the function
$$
\Phi(y) := \int\varphi(x)(2\pi\varepsilon^2)^{-k/2}
\exp\Bigl(-\frac{|y-x|^2}{2\varepsilon^2}\Bigr)\, dx.
$$
We have
$$
\nabla\Phi(y) =
\varepsilon^{-1}\int\varphi(y+\varepsilon z)(2\pi)^{-k/2}z
\exp\Bigl(-\frac{|z|^2}{2}\Bigr)\, dz,
$$
hence
$
|\Phi(y)|\le 1, \ |\nabla\Phi(y)|\le\varepsilon^{-1}.
$
Therefore,
\begin{equation} \label{k-kr-differ}
\|\sigma*\gamma_k^\varepsilon-\nu*\gamma_k^\varepsilon\|_{{\rm TV}}
\le \varepsilon^{-1}d_{{\rm K}} (\sigma, \nu).
\end{equation}
We now estimate the remaining term in the right-hand side of (\ref{ek3.1}):
\begin{align*}
&\|(\sigma - \nu)-(\sigma - \nu)*\gamma_k^\varepsilon\|_{{\rm TV}}
\\
&=\sup\limits_{\|\varphi\|_\infty\le1}
\int  \biggl((2\pi\varepsilon^2)^{-k/2}
\exp\Bigl(-\frac{|y|^2}{2\varepsilon^2}\Bigr)
\int\varphi(x)\bigl((\sigma - \nu)-(\sigma_y - \nu_y)\bigr)(dx)\biggr)dy
\\
&\le
\|\sigma - \nu\|_{B^\alpha}
\int(2\pi\varepsilon^2)^{-k/2}
\exp\Bigl(-\frac{|y|^2}{2\varepsilon^2}\Bigr)|y|^\alpha\, dy
\\
&=
\varepsilon^\alpha \|\sigma - \nu\|_{B^\alpha}(2\pi)^{-k/2}
\int |y|^\alpha\exp\Bigl(-\frac{|y|^2}{2}\Bigr)\, dy.
\end{align*}
Hence we have
$$
\|\sigma-\nu\|_{{\rm TV}}
\le
\varepsilon^{-1}d_{{\rm K}} (\sigma, \nu)
+ \varepsilon^\alpha \|\sigma - \nu\|_{B^\alpha} \int |x|^\alpha \gamma_k(dx).
$$
Taking
$\varepsilon = \bigl(\|\sigma - \nu\|_{{\rm K}}/\|\sigma - \nu\|_{B^\alpha}\bigr)^{1/(1+\alpha)}$,
 we obtain~(\ref{fracHLL}).
\end{proof}

\begin{remark}\label{rem2.1}
\rm
(i)
One can modify the previous proof to obtain the following estimate
for probability measures $\nu, \sigma\in B^\alpha(\mathbb{R}^k)$ employing
the Fortet--Mourier metric:
\begin{align*}
\|\sigma-\nu\|_{{\rm TV}}
& \le
C(k,\alpha)\|\sigma - \nu\|_{B^\alpha}^{1/(1+\alpha)}d_{{\rm FM}}
(\sigma, \nu)^{\alpha/(1+\alpha)}+d_{{\rm FM}} (\sigma, \nu)
\\
&\le \bigl(C(k,\alpha)\|\sigma - \nu\|_{B^\alpha}^{1/(1+\alpha)}+2^{1/(1+\alpha)}\bigr)
d_{{\rm FM}} (\sigma, \nu)^{\alpha/(1+\alpha)} ,
\end{align*}
where $C(k,\alpha)$ is the same as above.
To this end,  in place of inequality (\ref{k-kr-differ}) we write
$
\|\sigma*\gamma_k^\varepsilon-\nu*\gamma_k^\varepsilon\|_{{\rm TV}}
\le
\bigl(\varepsilon^{-1}+1\bigr)d_{{\rm FM}} (\sigma, \nu),
$
and then proceed as in the proof above. The additional quantity $2^{1/(1+\alpha)}$ is not needed if
we slightly decrease the power at $d_{{\rm FM}}$ as explained in~(ii).

(ii)
In relation to (i) we observe that the two distances $d_{{\rm FM}}$
and $d_{{\rm K}}$, which in general admit only the one-sided estimate
$d_{{\rm FM}}\le d_{{\rm K}}$, are very close on the set of distributions of polynomials of degree $d$
with variances not exceeding a fixed number~$b$. More precisely, there is a number $L(d,b)$ such that
$$
d_{{\rm K}} (\gamma\circ f^{-1}, \gamma\circ g^{-1})\le
L(d,b) d_{{\rm FM}}(\gamma\circ f^{-1}, \gamma\circ g^{-1})
(|\log d_{{\rm FM}}(\gamma\circ f^{-1}, \gamma\circ g^{-1})|^{d/2}+1).
$$
Indeed, it is known (see \cite[Corollary~5.5.7]{GM}) that
$$
\gamma(x\colon \ |f(x)|\ge t \|f\|_2)\le c_r \exp (- r t^{2/d}), \quad
r<\frac{d}{2e},
$$
where $c_r$ depends only on~$r$.
Let $\varphi$  be a $1$-Lipschitz function on $\mathbb{R}$.
We can assume that $\varphi(0)=0$, since
$\varphi(f)  -\varphi(g)$ does not change if we subtract $\varphi(0)$ from~$\varphi$.
Considering the bounded function $\varphi_R=\max (-R, \min(R,\varphi))$, we obtain
\begin{align*}
&\int_{\mathbb{R}^k} [\varphi(f)  -\varphi(g)]\, d\gamma
\\
&\le (R + 1) d_{{\rm FM}}(\gamma\circ f^{-1}, \gamma\circ g^{-1})
+ \int_{\mathbb{R}^k} \bigl[|\varphi(f)-\varphi_R(f)|+|\varphi(g)-\varphi_R(g)|\bigr]\, d\gamma
\\
 & \le  (R+1) d_{{\rm FM}}(\gamma\circ f^{-1}, \gamma\circ g^{-1})
+ \int_{|f|>R} |f|\, d\gamma +  \int_{|g|>R} |g|\, d\gamma
\\
 & \le (R+1) d_{{\rm FM}}(\gamma\circ f^{-1}, \gamma\circ g^{-1})
+C_1 \exp (- C_2 R^{2/d}).
\end{align*}
Now we take
$$
R=\Bigl(\frac{|\log d_{{\rm FM}}(\gamma\circ f^{-1}, \gamma\circ g^{-1})|}{C_2}\Bigr)^{d/2}
$$
and immediately get the desired estimate if $d_{{\rm FM}}(\gamma\circ f^{-1}, \gamma\circ g^{-1})\le 1$.
Finally, we observe
 that if $d_{{\rm FM}}(\gamma\circ f^{-1}, \gamma\circ g^{-1})> 1$, then
$\exp (- C_2 R^{2/d})<d_{{\rm FM}}(\gamma\circ f^{-1}, \gamma\circ g^{-1})$, and thus
we obtain the estimate in the general case. However, we do not know whether the logarithmic
factor is really needed.
\end{remark}

\begin{remark}\label{rem3}
\rm
Let $\nu\in B^\alpha(\mathbb{R}^k)$ be a  Borel measure on $\mathbb{R}^k$.
Then one can prove by a similar reasoning that for every Borel set $A$ one has
$$
\nu(A) \le C_1(k,\alpha)\|\nu\|_{B^\alpha}^{k/(\alpha+k)}\lambda_k(A)^{\alpha/(\alpha + k)},
$$
where $\lambda_k$ is the standard Lebesgue measure on $\mathbb{R}^k$,
$$
C_1(k,\alpha) = (2\pi)^{-k/2} + (2\pi)^{-k/2}\int_{\mathbb{R}^k}
 \exp{\Bigl(-\frac{|y|^2}{2}\Bigr)}|y|^\alpha dy.
$$
However, the embedding theorem for Nikol'skii--Besov  spaces (see (\ref{nikolski-emb}))
gives a slightly better power: for any $r<\alpha/k$ there is
$C_2(k,\alpha,r)>0$ such that
$$
\nu(A) \le C_2(k,\alpha,r)(\|\nu\|_{B^\alpha}+1)\lambda_k(A)^{r}
\quad
\hbox{for every Borel set $A$.}
$$
\end{remark}

\section{Fractional smoothness of polynomial images of Gaussian measures}

Let us recall that the Ornstein--Uhlenbeck operator $L$ associated with the
standard Gaussian measure $\gamma$ on $\mathbb{R}^n$ is defined by
$$
L\varphi(x)=\Delta \varphi(x)-\langle x,\nabla \varphi(x)\rangle ,
$$
where $\Delta$ is the Laplace operator. The operator $L$ is symmetric
in $L^2(\gamma)$ (with domain $W^{2,2}(\gamma)$) and is frequently used
in the integration by parts formula
$$
\int_{\mathbb{R}^n} \varphi L\psi \, d\gamma
=-\int_{\mathbb{R}^n} \langle \nabla \varphi, \nabla \psi\rangle\, d\gamma .
$$
We employ this formula below.

Let $f\colon\mathbb{R}^n\to\mathbb{R}^k$ be a  mapping such that
its components $f_1,\ldots,f_k$ are polynomials of degree $d$.
Let us introduce the Malliavin matrix of $f$ by
$$
M_f(x)=(m_{i,j}(x))_{i,j\le k},
\quad
m_{i,j}(x):=\langle\nabla f_i(x), \nabla f_j(x)\rangle.
$$
It is a polynomial of degree $2k(d-1)$.
Let
$$
A_f:=(a_{i, j})_{i,j\le k}
$$
 be the adjugate matrix of $M_f$, i.e., $a_{i, j}=M^{j, i}$, where $M^{j, i}$
is the cofactor of $m_{j, i}$ in the matrix~$M_f$. Note that $a^{i,j}$ is a polynomial of degree
$k-1$ in $m_{s,t}$.
Set
$$
\Delta_f:=\det M_f.
$$
We observe that $\Delta_f\ge 0$ and
\begin{equation}\label{inver}
\Delta_f\cdot M_f^{-1} = A_f.
\end{equation}
Let $\sigma^2_{f_i}$ denote the variance of the random variable $f_i$ on $(\mathbb{R}^n,\gamma)$:
$$
\sigma^2_{f_i} := \int_{\mathbb{R}^n}
 \Bigl(f_i - \int_{\mathbb{R}^n} f_i\, d\gamma\Bigr)^2\, d\gamma.
$$

The first main result of this section is the following theorem which says
that the distribution of a  polynomial mapping $f$ with respect to a Gaussian measure
such that $f$ is nondegenerate
(in the sense that $\Delta_f>0$ on a positive measure set, or equivalently, $\gamma_n\circ f^{-1}$
is absolutely continuous)
always belongs to some Nikol'skii--Besov  class whose index depends
only on the maximal degree of components and the number of components, but
not on the number of variables.

\begin{theorem}\label{th4.1}
Let $k,d\in\mathbb{N}$, $a>0$, $b>0$, $\tau>0$. Then there exists a number  $C(d, k, a, b, \tau)>0$
such that, for every mapping $f = (f_1, \ldots, f_k)\colon\,
\mathbb{R}^n\to\mathbb{R}^k$, where each $f_i$ is a polynomial of degree~$d$ and
$$
\int_{\mathbb{R}^n} \Delta_f \, d\gamma_n\ge a, \quad
\max_{i\le k} \sigma_{f_i}\le b,
$$
for every function  $\varphi\in C_b^\infty(\mathbb{R}^k)$
   and every vector $e\in\mathbb{R}^k$ with $|e|=1$,
one has
$$
\int_{\mathbb{R}^n} \partial_e\varphi(f(x))\, \gamma_n(dx)\le C(d, k, a, b, \tau)
\|\varphi\|_\infty^\alpha
\|\partial_e\varphi\|_{\infty}^{1-\alpha}, \quad \alpha=\frac{1}{4k(d-1)+\tau}.
$$
Therefore, we have
$$
\|\gamma_n\circ f^{-1} - (\gamma_n\circ f^{-1})_h\|_{{\rm TV}}
\le C(d, k, a, b, \tau) |h|^\alpha,
$$
equivalently,
$$
\gamma_n\circ f^{-1}\in B^\alpha(\mathbb{R}^k) \quad \text{for every}\ \alpha<\frac{1}{4k(d-1)}.
$$
In particular, the density of $\gamma_n\circ f^{-1}$
belongs to all $L^p(\mathbb{R}^k)$ with $p<k/(k-\alpha)$.
\end{theorem}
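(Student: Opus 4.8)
Everything reduces to the displayed functional inequality: granting it, Proposition~\ref{pro2.1} applied to $\nu=\gamma_n\circ f^{-1}$ gives $\nu\in B^\alpha(\mathbb{R}^k)$ with $\|\nu\|_{B^\alpha}\le 2^{1-\alpha}C$, and \eqref{nikolski-emb} gives the $L^p$-membership. Moreover it suffices to prove the inequality for a \emph{single} exponent $\alpha_0$, which can be taken arbitrarily close to $1/(4k(d-1))$ from below: since $\gamma_n$ is a probability measure one always has $\int\partial_e\varphi(f)\,d\gamma_n\le\|\partial_e\varphi\|_\infty$, and interpolating this trivial bound geometrically against the bound with exponent $\alpha_0$ (after replacing $\varphi$ by $-\varphi$ if the integral is negative) yields the bound with every $\alpha<\alpha_0$. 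So it is enough to realise each $\alpha_0=(4k(d-1)+1-1/q)^{-1}$ with $q>1$; putting $\tau=1-1/q\in(0,1)$ gives exactly $\alpha=(4k(d-1)+\tau)^{-1}$, and the remaining $\tau\ge 1$ is covered too.

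\textbf{Reduction via integration by parts.} By the chain rule, $\langle\nabla(\varphi\circ f),\nabla f_j\rangle=(M_f v)_j$ with $v:=((\partial_1\varphi)(f),\dots,(\partial_k\varphi)(f))$; multiplying by the adjugate and using \eqref{inver},
\[
\Delta_f\,\partial_e\varphi(f)=\sum_{i,j\le k}e_i\,a_{i,j}\,\langle\nabla(\varphi\circ f),\nabla f_j\rangle .
\]
Fix $\delta>0$ and write $\partial_e\varphi(f)=T_1+T_2$ with $T_1=\tfrac{\Delta_f}{\Delta_f+\delta}\partial_e\varphi(f)$, $T_2=\tfrac{\delta}{\Delta_f+\delta}\partial_e\varphi(f)$. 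Then $T_1=\langle\nabla(\varphi\circ f),\widetilde V\rangle$ with $\widetilde V=(\Delta_f+\delta)^{-1}U$, $U=\sum_{i,j}e_i a_{i,j}\nabla f_j$, and since the components of $\widetilde V$ are smooth with polynomial-growth derivatives, the Gaussian integration-by-parts formula gives
\[
\int T_1\,d\gamma_n=\int(\varphi\circ f)\,\delta(\widetilde V)\,d\gamma_n,\qquad
\delta(\widetilde V)=\frac{\delta(U)}{\Delta_f+\delta}+\frac{\langle\nabla\Delta_f,U\rangle}{(\Delta_f+\delta)^2},
\]
where $\delta(U)=-\sum_{i,j}e_i\bigl(a_{i,j}\,Lf_j+\langle\nabla a_{i,j},\nabla f_j\rangle\bigr)$ is a genuine polynomial of $(k,d)$-bounded degree (using $\delta(\nabla f_j)=-Lf_j$ and the Leibniz rule for $\delta$). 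Hence $|\int T_1\,d\gamma_n|\le\|\varphi\|_\infty\|\delta(\widetilde V)\|_{L^1(\gamma_n)}$ and $|\int T_2\,d\gamma_n|\le\|\partial_e\varphi\|_\infty\,\delta\int(\Delta_f+\delta)^{-1}d\gamma_n$.

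\textbf{The two $n$-uniform estimates.} First, every polynomial above has an $L^p(\gamma_n)$-norm bounded in terms of $d,k,b$ only: by the reverse Poincar\'e inequality \eqref{rv-Poin}, $\int|\nabla g|^2\,d\gamma_n\le c(\deg g)\,\mathrm{Var}\,g$ for any polynomial $g$, whence $\|m_{s,t}\|_{L^1}\le c(d)b^2$, $\int|\nabla a_{i,j}|^2\le c\,\|a_{i,j}\|_{L^2}^2$, $\int|\nabla\Delta_f|^2\le c\,\|\Delta_f\|_{L^2}^2$, and the equivalence of all $L^p$-norms on polynomials of fixed degree propagates these to $\|a_{i,j}\|_{L^p}$ (a product of $k-1$ of the $m_{s,t}$), $\|\nabla\Delta_f\|_{L^p}$, $\|\,|U|\,\|_{L^p}$; moreover $\|Lf_j\|_{L^2}\le d\,\sigma_{f_j}\le db$, since $L$ acts as multiplication by $-m$ on the $m$-th Wiener chaos. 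In particular $\|\delta(U)\|_{L^1}\le C(k,d,b)$ and $\bigl\| |\nabla\Delta_f|\,|U| \bigr\|_{L^{q'}}\le C(q,k,d,b)$ for every $q'<\infty$. Second, by Carbery--Wright \eqref{cw-ineq} applied to the nonnegative polynomial $\Delta_f$ of degree $D=2k(d-1)$ together with $\int\Delta_f\,d\gamma_n\ge a$, one gets $\gamma_n(\Delta_f\le t)\le C(a,k,d)\,t^{1/D}$, and a layer-cake computation then yields, for all small $\delta$ and every $p\ge 1$,
\[
\|(\Delta_f+\delta)^{-1}\|_{L^p(\gamma_n)}\le C(p,k,d,a)\,\delta^{-1+1/(pD)} .
\]
From this, $\delta\int(\Delta_f+\delta)^{-1}d\gamma_n\le C\,\delta^{1/D}$ (this controls $T_2$), and by H\"older with exponents $q,q'$ followed by the above with $p=2q$,
\[
\Bigl\|\frac{\langle\nabla\Delta_f,U\rangle}{(\Delta_f+\delta)^2}\Bigr\|_{L^1}\le
\bigl\| |\nabla\Delta_f|\,|U| \bigr\|_{L^{q'}}\,\|(\Delta_f+\delta)^{-1}\|_{L^{2q}}^2
\le C(q,k,d,a,b)\,\delta^{-2+1/(qD)},
\]
while $\|(\Delta_f+\delta)^{-1}\delta(U)\|_{L^1}\le C(k,d,b)\,\delta^{-1}$; since $1/(qD)<1$ the first term dominates, so $\|\delta(\widetilde V)\|_{L^1}\le C(q,k,d,a,b)\,\delta^{-2+1/(qD)}$.

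\textbf{Optimisation and the main obstacle.} Combining, for all small $\delta>0$,
\[
\int\partial_e\varphi(f)\,d\gamma_n\le C_1\|\varphi\|_\infty\,\delta^{-2+1/(qD)}+C_2\|\partial_e\varphi\|_\infty\,\delta^{1/D},
\]
and balancing the two terms in $\delta$ gives $\int\partial_e\varphi(f)\,d\gamma_n\le C\|\varphi\|_\infty^{\alpha_0}\|\partial_e\varphi\|_\infty^{1-\alpha_0}$ with $\alpha_0=(4k(d-1)+1-1/q)^{-1}$; in the regime where the optimal $\delta$ is not small (i.e.\ $\|\varphi\|_\infty$ comparable to $\|\partial_e\varphi\|_\infty$) one instead notes that the right-hand side already exceeds $\|\partial_e\varphi\|_\infty\ge\int\partial_e\varphi(f)\,d\gamma_n$, so the bound is trivial there. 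This proves the inequality for $\alpha=(4k(d-1)+\tau)^{-1}$ with $\tau=1-1/q\in(0,1)$, hence by the interpolation remark of the first paragraph for all $\tau>0$ and, letting $q\downarrow1$, for every $\alpha<(4k(d-1))^{-1}$. The expected main obstacle is precisely the third step: getting the $\delta$-exponent $-2+1/(qD)$, rather than $-2$, for the term $(\Delta_f+\delta)^{-2}\langle\nabla\Delta_f,U\rangle$. The naive bound $(\Delta_f+\delta)^{-2}\le\delta^{-2}$ only yields $\alpha_0=(4k(d-1)+1)^{-1}$, which is too weak to reach all $\alpha<(4k(d-1))^{-1}$; extracting the extra $\delta^{1/(qD)}$ requires combining H\"older's inequality with the Carbery--Wright tail bound for $\Delta_f$ at an exponent $q$ tending to $1$, at the cost of a constant blowing up as $\tau\to0$ --- consistent with the $\tau$-dependence of $C(d,k,a,b,\tau)$ in the statement.
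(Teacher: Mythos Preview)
Your proof is correct and follows essentially the same route as the paper: the decomposition $\partial_e\varphi(f)=\frac{\Delta_f}{\Delta_f+\delta}\partial_e\varphi(f)+\frac{\delta}{\Delta_f+\delta}\partial_e\varphi(f)$, the integration by parts via the Ornstein--Uhlenbeck divergence on the first term (yielding the same three pieces the paper obtains), the Carbery--Wright layer-cake bound for $\|(\Delta_f+\delta)^{-p}\|_{L^1}$, and the H\"older split at exponent $q>1$ to extract the extra $\delta^{1/(qD)}$ from the $(\Delta_f+\delta)^{-2}$ term are all exactly what the paper does. The only cosmetic differences are that you package the integration by parts through the divergence operator $\delta(\widetilde V)$ rather than writing out $L$ directly, and that you handle the range $\tau\ge 1$ (and the regime where the optimal $\delta$ is not small) by interpolating against the trivial bound $\int\partial_e\varphi(f)\,d\gamma_n\le\|\partial_e\varphi\|_\infty$, whereas the paper normalises $\|\varphi\|_\infty\le 1$ and treats the two cases $\|\partial_e\varphi\|_\infty\lessgtr 1$ separately; these amount to the same thing.
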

\begin{proof}
We can assume that $\|\varphi\|_\infty\le 1$.
If $\|\partial_e\varphi\|_\infty \le1$, then for any $\alpha>0$ we have
(omitting indication of $\mathbb{R}^n$ in all integrations in this proof)
$$
\int \partial_e\varphi(f(x))\, \gamma_n(dx)\le \|\partial_e\varphi\|_\infty \le
\|\partial_e\varphi\|_{\infty}^{1-\alpha}.
$$
Suppose now that $\|\partial_e\varphi\|_{\infty}\ge1$.
It can be easily verified that
$$
M_f (\partial_{x_1}\varphi(f),\ldots, \partial_{x_k}\varphi(f)) =
\bigl(\langle\nabla (\varphi\circ f), \nabla f_1\rangle, \ldots,
\langle\nabla (\varphi\circ f), \nabla f_k\rangle\bigr).
$$
Here the left-hand side is interpreted as the standard product of
a matrix and a vector (with components $\partial_{x_i}\varphi(f)$)
and $\nabla$ denotes the gradient of a function of $n$ variables.
Then by equality (\ref{inver}) we obtain
$$
(\partial_e\varphi)(f) \Delta_f =
\bigl\langle v , A_f e\bigr\rangle, \quad
v=\bigl(\langle\nabla (\varphi\circ f), \nabla f_1\rangle, \ldots,
 \langle\nabla (\varphi\circ f), \nabla f_k\rangle\bigr).
$$
Let $\varepsilon\in (0,1)$ be a fixed number that will be chosen later.
The integral that we want  to estimate can be written as
\begin{equation}\label{ek4.2}
\int \partial_e\varphi(f)\, d\gamma_n
=
\int \partial_e\varphi(f)\frac{\Delta_f}{\Delta_f+\varepsilon}\, d\gamma_n+
\varepsilon\int \frac{\partial_e\varphi(f)}{\Delta_f+\varepsilon}\, d\gamma_n.
\end{equation}
We now estimate each term.
By the reasoning above we can write
$$
\int \partial_e\varphi(f)\frac{\Delta_f}{\Delta_f+\varepsilon}\, d\gamma_n
=
\int \frac{\bigl\langle\bigl(\langle\nabla \varphi\circ f, \nabla f_1\rangle, \ldots,
 \langle\nabla \varphi\circ f, \nabla f_k\rangle\bigr) , A_f e\bigr\rangle}
 {\Delta_f+\varepsilon}\, d\gamma_n.
$$
Letting $h(x) = A_f(x)e$,
we can integrate by parts and write the above term as
\begin{equation} \label{lem3_3terms}
\begin{split}
\int &(\Delta_f+\varepsilon)^{-1}
\sum_{i=1}^k\langle\nabla \varphi\circ f, \nabla f_i\rangle h_i\, d\gamma_n
\\
&=-\sum_{i=1}^k\int\varphi\circ f\Bigl( \frac{h_i Lf_i}
{\Delta_f+\varepsilon} -
\frac{h_i\langle\nabla f_i, \nabla \Delta_f\rangle}{(\Delta_f+\varepsilon)^{2}}
+\frac{\langle\nabla f_i, \nabla h_i \rangle}{\Delta_f+\varepsilon}\Bigr)\,
d\gamma_n
\\
&\le
\int \Bigl|\sum_{i=1}^kh_i Lf_i\Bigr|
(\Delta_f+\varepsilon)^{-1}\, d\gamma_n
+\int\Bigl| \sum_{i=1}^kh_i\langle\nabla f_i, \nabla \Delta_f\rangle
\Bigr|(\Delta_f+\varepsilon)^{-2}\, d\gamma_n
 \\
&+
\int\Bigl| \sum_{i=1}^k\langle\nabla f_i, \nabla h_i \rangle
\Bigr|(\Delta_f+\varepsilon)^{-1}\, d\gamma_n.
\end{split}
\end{equation}
We have to estimate each of the three terms. First of all,
note that $\Delta_f$ is itself a measurable polynomial of degree $2k(d-1)$.
We set
$$
\beta=\frac{1}{2k(d-1)}
$$
and use the Carbery--Wright inequality (\ref{cw-ineq}) to obtain
\begin{equation}\label{CaW4.4}
\begin{split}
&\int (\Delta_f+\varepsilon)^{-p}\, d\gamma_n=
p\int_0^{\varepsilon^{-1}}t^{p-1}\gamma_n\bigl((\Delta_f+\varepsilon)^{-1}\ge t\bigr)\, dt
\\
&=
p\int_0^\infty (s+\varepsilon)^{-p-1}\gamma_n\bigl(\Delta_f \le s\bigr)\, ds
\\
&\le2cpk(d-1)\biggl(\int \Delta_f \, d\gamma_n\biggr)^{-\beta}
\int_0^\infty (s+\varepsilon)^{-p-1}s^{\beta}\, ds
\\
&=\varepsilon^{-p+\beta}2cpk(d-1)
\biggl(\int \Delta_f\, d\gamma_n\biggr)^{-\beta}\int_0^\infty(s+1)^{-p-1}s^{\beta}\, ds.
\end{split}
\end{equation}
Let
$$
c(p,d) := \biggl(2cpk(d-1)\int_0^\infty(s+1)^{-p-1}s^{\beta} \, ds\biggr)^{1/p}.
$$
Let $\|A\|_{HS}=\Bigl(\sum_{i,j} a_{ij}^2\Bigr)^{1/2}$
denote the Hilbert--Schmidt norm of a matrix~$A=(a_{ij})$.
Then $\|A_f(x)\|_{HS}$
is estimated by a polynomial in the matrix elements $m_{i,j}(x)$. Hence its
$L^p$-norms are bounded by powers of $b$ (with some constants depending on $d$, $k$
and~$p$).
Let us estimate the first term in the right-hand side of (\ref{lem3_3terms}):
\begin{align*}
\int \Bigl| \sum_{i=1}^kh_i Lf_i\Bigr|
 (\Delta_f+\varepsilon)^{-1}\, d\gamma_n
 &\le
 \int (\Delta_f+\varepsilon)^{-1}\|A_f\|_{HS}
 \Bigl(\sum_{i=1}^k |Lf_i|^2\Bigr)^{1/2}\,
 d\gamma_n
 \\
 &\le \varepsilon^{-1}\int\|A_f\|_{HS}
 \Bigl(\sum_{i=1}^k |L f_i|^2\Bigr)^{1/2}\, d\gamma_n.
\end{align*}
Next we estimate the second term in the right-hand side of (\ref{lem3_3terms}):
\begin{multline*}
\int\Bigl| \sum_{i=1}^kh_i\langle\nabla f_i, \nabla \Delta_f\rangle\Bigr|
(\Delta_f+\varepsilon)^{-2}\, d\gamma_n
\\
\le\int (\Delta_f+\varepsilon)^{-2}
\|A_f\|_{HS} \Bigl( \sum_{i=1}^k\langle\nabla f_i, \nabla \Delta_f\rangle^2\Bigr)^{1/2}\, d\gamma_n
\\
\le
\int (\Delta_f+\varepsilon)^{-2}\|A_f\|_{HS}|\nabla \Delta_f|
\Bigl( \sum_{i=1}^k|\nabla f_i|^2\Bigr)^{1/2}\, d\gamma_n
\\
\le
\biggl(\int(\Delta_f+\varepsilon)^{-2q}\, d\gamma_n\biggr)^{1/q}
\biggl(\int\|A_f\|^{q'}_{HS}|\nabla \Delta_f|^{q'}
\Bigl( \sum_{i=1}^k|\nabla f_i|^2\Bigr)^{q'/2}\, d\gamma_n\biggr)^{1/q'}
\\
\le
c(2q,d)^2 \varepsilon^{-2+\beta/q}
\biggl(\int \Delta_f \, d\gamma_n\biggr)^{-\beta/q}
\\
\times
\biggl(\int\|A_f\|^{q'}_{HS}|\nabla \Delta_f|^{q'}
\Bigl( \sum_{i=1}^k|\nabla f_i|^2\Bigr)^{q'/2}\, d\gamma_n\biggr)^{1/q'},
\end{multline*}
where $q'=q/ (q-1)$ appears due to H\"older's inequality.

Finally, let us estimate the third term in the right-hand side of (\ref{lem3_3terms}):
\begin{align*}
\int\Bigl| \sum_{i=1}^k\langle\nabla f_i, \nabla h_i \rangle\Bigr|(\Delta_f
+\varepsilon)^{-1}\, d\gamma_n
& \le
\int (\Delta_f+\varepsilon)^{-1}\sum_{i=1}^k|\nabla f_i|\, |\nabla h_i|\, d\gamma_n
\\
&\le
\frac{1}{2\varepsilon}\int\sum_{i=1}^k \Bigl(|\nabla f_i|^2+|\nabla h_i|^2\Bigr)\, d\gamma_n.
\end{align*}
Since
$-2+\beta/q<-1$ and $\varepsilon\le1$, we have
$\varepsilon^{-1}\le \varepsilon^{-2+\beta/q}$.

We now use (\ref{CaW4.4})
to estimate the second term in the right-hand side of (\ref{ek4.2}):
$$
\int \frac{\partial_e\varphi(f)}{\Delta_f+\varepsilon}\, d\gamma_n
\le
\|\partial_e\varphi\|_\infty c(1,d)\varepsilon^{-1+\beta}\biggl(\int \Delta_f \,d\gamma_n\biggr)^{-\beta}.
$$
Setting  $\tau=\frac{q-1}{q}$ and taking
$$
\varepsilon = \|\partial_e\varphi\|_\infty^\omega, \quad
\omega={-\frac{1}{2+\tau\beta}}={-\frac{2k(d-1)}{4k(d-1)+\tau}},
$$
we arrive at the estimate
\begin{equation}\label{n-est-j}
\int \partial_e\varphi(f)\, d\gamma_n\le C\|\partial_e\varphi\|_\infty^{1-\alpha},
\quad  \alpha=\frac{1}{4k(d-1)+\tau},
\end{equation}
where
\begin{align*}
C = &\int\|A_f\|_{HS}\Bigl(\sum_{i=1}^k |Lf_i|^2\Bigr)^{1/2}
\, d\gamma_n
\\
&+c(2q,d)^2\biggl(\int \Delta_f\, d\gamma_n\biggr)^{-\beta/q}
\biggl(\int\|A_f\|^{q'}_{HS}|\nabla \Delta_f|^{q'}
\Bigl( \sum_{i=1}^k|\nabla f_i|^2\Bigr)^{q'/2}\, d\gamma_n\biggr)^{1/q'}
\\
&+\frac{1}{2}\int\sum_{i=1}^k \Bigl[|\nabla f_i|^2+|\nabla h_i|^2\Bigr]\, d\gamma_n+
c(1,d)\biggl(\int \Delta_f \, d\gamma_n\biggr)^{-\beta}.
\end{align*}
Using inequality (\ref{rv-Poin}) and the equivalence of the $L^p$-norms of measurable polynomials
of degree~$d$
we can replace this number
$C$ by a number $C(d, k, a, b, \tau)$ that  depends only on $d$, $k$, $a$, $b$ and~$\tau$.
Recall that $\|A_f\|_{HS}$ is estimated by a polynomial in the matrix elements $m_{i,j}(x)$. Hence its
$L^p$-norms are also bounded by powers of~$b$.
By choosing $q>1$ sufficiently close to $1$, we can make
$\tau=\frac{q-1}{q}$ in (\ref{n-est-j}) as small as we   wish. It remains to take into
account Proposition \ref{pro2.1}.
\end{proof}

By the aforementioned compact embedding (\ref{nikolski-emb}) on balls,
we immediately obtain convergence of densities in $L^p(\mathbb{R}^k)$
with $p<k/(k-\alpha)$
in case of weak convergence of distributions of mappings
satisfying the assumptions of Theorem \ref{th4.1}
(which sharpens a result from~\cite{NNP}).

Combining Theorem \ref{th4.1} and Theorem \ref{t3.2} we obtain the following result.

\begin{theorem}\label{t4.3}
Let $k,d\in \mathbb{N}$, $a>0$, $b>0$,  $\tau>0$. Then there is
$C=C(d, k, a, b, \tau)$ such that, whenever
$
f = (f_1, \ldots, f_k)
$
and
$
g = (g_1, \ldots, g_k)
$
are mappings from $\mathbb{R}^n$ to $\mathbb{R}^k$ such that
their components $f_i, g_i$ are polynomials of  degree~$d$ with
$$
\int \Delta_f\, d\gamma_n\ge a,\quad
\int \Delta_g\, d\gamma_n\ge a, \quad \max_{i\le k} \sigma_{f_i}\le b,
\quad
 \max_{i\le k} \sigma_{g_i}\le b,
$$
one has
$$
d_{{\rm TV}} (\gamma_n\circ f^{-1}, \gamma_n\circ g^{-1})\le
C d_{{\rm K}} (\gamma_n\circ f^{-1}, \gamma_n\circ g^{-1})^{\theta},
\quad \theta=\frac{1}{4k(d-1)+1+\tau}.
$$
\end{theorem}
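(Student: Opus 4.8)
The plan is to feed the two results already established into one another: Theorem~\ref{th4.1} tells us that the image measures $\gamma_n\circ f^{-1}$ and $\gamma_n\circ g^{-1}$ lie in a fixed Nikol'skii--Besov class with a norm bound independent of $n$ and of the mappings, and Theorem~\ref{t3.2} then converts this, together with the Kantorovich distance, into a total variation bound. Concretely, I would fix $\tau>0$, set $\alpha=\dfrac{1}{4k(d-1)+\tau}$, and write $\mu=\gamma_n\circ f^{-1}$, $\nu=\gamma_n\circ g^{-1}$. Applying Theorem~\ref{th4.1} once to $f$ and once to $g$ (each satisfies its hypotheses $\int\Delta\,d\gamma_n\ge a$ and $\max_i\sigma_{(\cdot)_i}\le b$) and invoking Proposition~\ref{pro2.1} as in that proof, I obtain $\mu,\nu\in B^\alpha(\mathbb{R}^k)$ with $\|\mu\|_{B^\alpha}\le C_1$ and $\|\nu\|_{B^\alpha}\le C_1$, where $C_1=C_1(d,k,a,b,\tau)$ is the constant furnished by Theorem~\ref{th4.1}; crucially it depends neither on $n$ nor on the particular mappings.

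Next I would check the mild hypothesis $\alpha\in(0,1]$ required by Theorem~\ref{t3.2}. For $d\ge 2$ one has $4k(d-1)\ge 4$, so $\alpha<1$ automatically; the degenerate case $d=1$ can be handled either by noting that both $\mu,\nu$ are then Gaussian with bounded, non-degenerate covariances, or simply by replacing $\alpha$ with any $\alpha'\le 1$, using the elementary embedding $B^\alpha(\mathbb{R}^k)\subset B^{\alpha'}(\mathbb{R}^k)$ for finite measures (for $|h|\le1$ one has $|h|^{\alpha'}\ge|h|^\alpha$, and for $|h|\ge1$ one bounds $\|\nu_h-\nu\|_{{\rm TV}}$ by $2\|\nu\|_{{\rm TV}}$). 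Since $\|\cdot\|_{B^\alpha}$ is a norm, the signed measure $\mu-\nu$ belongs to $B^\alpha(\mathbb{R}^k)$ with $\|\mu-\nu\|_{B^\alpha}\le 2C_1$, and $\mu,\nu$ are Borel probability measures, so Theorem~\ref{t3.2} applies:
\[
\|\mu-\nu\|_{{\rm TV}}\le C(k,\alpha)\,\|\mu-\nu\|_{B^\alpha}^{1/(1+\alpha)}\,d_{{\rm K}}(\mu,\nu)^{\alpha/(1+\alpha)}\le C(k,\alpha)\,(2C_1)^{1/(1+\alpha)}\,d_{{\rm K}}(\mu,\nu)^{\alpha/(1+\alpha)}.
\]

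It then remains to match the exponents. A one-line computation gives $\dfrac{\alpha}{1+\alpha}=\dfrac{1}{4k(d-1)+\tau+1}=\theta$, so setting $C=C(d,k,a,b,\tau):=C(k,\alpha)\,(2C_1)^{1/(1+\alpha)}$ — which indeed depends only on $d,k,a,b,\tau$, since $\alpha$ is a function of $d,k,\tau$ alone — yields exactly the asserted inequality $d_{{\rm TV}}(\gamma_n\circ f^{-1},\gamma_n\circ g^{-1})\le C\,d_{{\rm K}}(\gamma_n\circ f^{-1},\gamma_n\circ g^{-1})^{\theta}$.

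In this argument there is essentially no obstacle beyond bookkeeping: all of the analytic difficulty has already been absorbed into Theorems~\ref{th4.1} and~\ref{t3.2} — the integration by parts against the Ornstein--Uhlenbeck operator combined with the Carbery--Wright inequality on one side, and the Gaussian-mollification trade-off between $d_{{\rm K}}$ and the Besov seminorm on the other. The single point that genuinely matters is that the Besov-norm estimate supplied by Theorem~\ref{th4.1} is \emph{uniform} in $n$ and in the mappings; this is precisely what that theorem provides, and it is what makes the resulting exponent $\theta$ depend only on $d$, $k$ (and the free parameter $\tau$) but not on the number of variables.
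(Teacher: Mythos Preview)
Your proposal is correct and follows exactly the approach indicated in the paper, which proves Theorem~\ref{t4.3} simply by ``combining Theorem~\ref{th4.1} and Theorem~\ref{t3.2}''. Your bookkeeping of the exponent $\alpha/(1+\alpha)=1/(4k(d-1)+1+\tau)$ and the uniformity of the Besov bound in $n$ is precisely what is needed, and your side remark on the borderline case $d=1$ (where $\alpha=1/\tau$ may exceed~$1$) is a harmless refinement the paper does not spell out.
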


\begin{remark}\label{rem3.1}
\rm
Using Remark \ref{rem2.1}, one can replace $d_{K}$ with $d_{{\rm FM}} $, that is,
under the assumptions of the theorem the following estimate is also true:
$$
d_{{\rm TV}} (\gamma_n\circ f^{-1}, \gamma_n\circ g^{-1})
\le C d_{{\rm FM}} (\gamma_n\circ f^{-1}, \gamma_n\circ g^{-1})^{\theta},
\quad \theta=\frac{1}{4k(d-1)+1+\tau}
$$
for every $\tau>0$ and some other number $C=C(d, k, a, b, \tau)$.
\end{remark}

We observe that the constants in Theorems \ref{th4.1} and \ref{t4.3} do not depend
on the dimension $n$. Hence
these theorems hold true when $f_i\colon\, X\to\mathbb R$ are $\gamma$-measurable
polynomials with respect to  an arbitrary centered
Radon Gaussian measure $\gamma$ on a locally convex space $X$.

\begin{corollary}\label{newcor1}
Let $\gamma$ be a centered
Radon Gaussian measure  on a locally convex space~$X$.
Let $k,d\in\mathbb{N}$, $a>0$, $b>0$, $\tau>0$. Then there is $C(d, k, a, b, \tau)>0$
such that, for every mapping $f = (f_1, \ldots, f_k)\colon\,
X\to\mathbb{R}^k$, where each $f_i$ is a $\gamma$-measurable
polynomial of degree~$d$ and
$$
\int_{\mathbb{R}^n} \Delta_f \, d\gamma> a, \quad \max_{i\le k}\sigma_{f_i}\le b,
$$
for every function  $\varphi\in C_b^\infty(\mathbb{R}^k)$
 and every vector $e\in\mathbb{R}^k$ with $|e|=1$,
one has
$$
\int_{X} \partial_e\varphi(f(x))\, \gamma(dx)\le C(d, k, a, b, \tau)
\|\varphi\|_\infty^{\alpha}
\|\partial_e\varphi\|_{\infty}^{1-\alpha}, \quad \alpha=\frac{1}{4k(d-1)+\tau}.
$$
Therefore, if $\Delta_f>0$ on a positive measure set, the induced
measure $\gamma\circ f^{-1}$ belongs to the Nikol'skii--Besov  class
$B^\alpha(\mathbb{R}^k)$ with $\alpha$ that depends only on $d$ and~$k$.
\end{corollary}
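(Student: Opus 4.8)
The plan is to derive Corollary~\ref{newcor1} from the finite-dimensional Theorem~\ref{th4.1} by a finite-dimensional approximation, the crucial point being (as noted just before the corollary) that the constant $C(d,k,a,b,\tau)$ in Theorem~\ref{th4.1} does not depend on the number of variables. First I would pass to the model case in which $\gamma$ is the countable power of the standard Gaussian measure on $\mathbb{R}$, so that $X=\mathbb{R}^\infty$, $H=\ell^2$ with the usual basis $\{e_n\}$ and $x=(x_1,x_2,\dots)$; this is harmless, since a measurable linear isomorphism intertwining a centered Radon Gaussian measure with its countable-power model sends $\gamma$-measurable polynomials of degree $d$ to such polynomials, preserves variances and Malliavin matrices, and leaves $\gamma\circ f^{-1}$ unchanged. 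Let $\mathcal{F}_n=\sigma(x_1,\dots,x_n)$ and set $f_i^{(n)}:=\mathbb{E}_\gamma[f_i\mid\mathcal{F}_n]$. The space $\mathcal{P}^d(\gamma)$ is invariant under conditioning on $\mathcal{F}_n$ (immediate from $\mathcal{P}^d(\gamma)=\bigoplus_{j\le d}\mathcal{H}_j$ and the fact that $\mathbb{E}[\cdot\mid\mathcal{F}_n]$ maps each chaos $\mathcal{H}_j$ into itself, annihilating every Hermite monomial involving a coordinate of index $>n$), and an $\mathcal{F}_n$-measurable element of $\mathcal{P}^d(\gamma)$ is an ordinary polynomial of degree at most $d$ in $x_1,\dots,x_n$. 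By the martingale convergence theorem $f_i^{(n)}\to f_i$ in $L^2(\gamma)$, hence, by the equivalence of all Sobolev and all $L^p$-norms on the space of measurable polynomials of degree $\le d$ recalled in Section~2, $f_i^{(n)}\to f_i$ in every Sobolev space $W^{p,r}(\gamma)$.

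Next I would check that $f^{(n)}=(f_1^{(n)},\dots,f_k^{(n)})$ satisfies the hypotheses of Theorem~\ref{th4.1} with constants independent of $n$ for all large $n$. Variance contracts under conditioning, so $\sigma_{f_i^{(n)}}\le\sigma_{f_i}\le b$. Since $\nabla f_i^{(n)}\to\nabla f_i$ in $L^p(\gamma;H)$ for every $p$, the entries $m_{i,j}^{(n)}=\langle\nabla f_i^{(n)},\nabla f_j^{(n)}\rangle$ of the Malliavin matrix converge to $m_{i,j}$ in every $L^p(\gamma)$, and therefore $\Delta_{f^{(n)}}=\det M_{f^{(n)}}\to\Delta_f$ in $L^1(\gamma)$; thus $\int\Delta_{f^{(n)}}\,d\gamma\to\int\Delta_f\,d\gamma>a$, so $\int\Delta_{f^{(n)}}\,d\gamma\ge a$ for all large $n$. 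Applying Theorem~\ref{th4.1} to $f^{(n)}\colon\mathbb{R}^n\to\mathbb{R}^k$ (whose components are polynomials of degree $\le d$) gives, for every $\varphi\in C_b^\infty(\mathbb{R}^k)$ and every unit vector $e\in\mathbb{R}^k$,
$$
\int_X\partial_e\varphi(f^{(n)}(x))\,\gamma(dx)=\int_{\mathbb{R}^n}\partial_e\varphi(f^{(n)})\,d\gamma_n\le C(d,k,a,b,\tau)\,\|\varphi\|_\infty^{\alpha}\,\|\partial_e\varphi\|_\infty^{1-\alpha},\qquad\alpha=\frac{1}{4k(d-1)+\tau},
$$
with one and the same constant for all large $n$ (the first identity because $f^{(n)}$ depends only on $x_1,\dots,x_n$, whose joint law under $\gamma$ is $\gamma_n$). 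Since $f^{(n)}\to f$ in $L^2(\gamma)$, hence in probability, and $\partial_e\varphi$ is bounded and continuous, bounded convergence gives $\partial_e\varphi(f^{(n)})\to\partial_e\varphi(f)$ in $L^1(\gamma)$, so letting $n\to\infty$ yields the asserted inequality
$$
\int_X\partial_e\varphi(f(x))\,\gamma(dx)\le C(d,k,a,b,\tau)\,\|\varphi\|_\infty^{\alpha}\,\|\partial_e\varphi\|_\infty^{1-\alpha}.
$$

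Finally, if $\Delta_f>0$ on a set of positive measure, then $\int\Delta_f\,d\gamma>0$ and the inequality above applies with $a=\tfrac12\int\Delta_f\,d\gamma$ and $b=\max_{i\le k}\sigma_{f_i}$; feeding it into Proposition~\ref{pro2.1} with $\nu=\gamma\circ f^{-1}$ and using $\int_X\partial_e\varphi(f)\,d\gamma=\int_{\mathbb{R}^k}\partial_e\varphi\,d(\gamma\circ f^{-1})$ shows $\gamma\circ f^{-1}\in B^\alpha(\mathbb{R}^k)$ with $\alpha=1/(4k(d-1)+\tau)$ for every $\tau>0$, hence $\gamma\circ f^{-1}\in B^\alpha(\mathbb{R}^k)$ for every $\alpha<1/(4k(d-1))$, and its density lies in $L^p(\mathbb{R}^k)$ for $p<k/(k-\alpha)$ by~(\ref{nikolski-emb}). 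I expect the only step needing real care to be the approximation bookkeeping of the first two paragraphs — that the conditional expectations $f_i^{(n)}$ are honest polynomials of degree $\le d$ and that $\sigma_{f_i^{(n)}}$, $\int\Delta_{f^{(n)}}\,d\gamma$ and $\partial_e\varphi(f^{(n)})$ converge as claimed; everything else is a direct appeal to Theorem~\ref{th4.1}, Proposition~\ref{pro2.1} and the norm-equivalence facts of Section~2.
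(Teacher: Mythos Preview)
Your proof is correct and follows essentially the same route as the paper: reduce to $\gamma$ on $\mathbb{R}^\infty$ via the Tsirelson isomorphism, approximate each $f_i$ by its conditional expectation $f_i^{(n)}=\mathbb{E}[f_i\mid\mathcal{F}_n]$, observe these are polynomials of degree $\le d$ converging to $f_i$ in all Sobolev norms, so that $\sigma_{f_i^{(n)}}\le b$ and $\int\Delta_{f^{(n)}}\,d\gamma\ge a$ for large $n$, and then pass to the limit in the inequality of Theorem~\ref{th4.1}. You supply a bit more detail than the paper (the martingale-convergence and bounded-convergence steps, and the explicit invocation of Proposition~\ref{pro2.1} for the last assertion), but the strategy is identical.
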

\begin{proof}
By the Tsirelson isomorphism theorem (see \cite[Chapter~3]{GM}),
we can assume that $\gamma$ is the countable power of the standard Gaussian
measure on the real line (i.e., $\gamma$ is defined
on~$\mathbb{R}^\infty$). In that case we can approximate each polynomial
$f_i$  by the sequence of its finite-dimensional
conditional expectations $f_{i,n}$ with respect
to the $\sigma$-fields generated by the first $n$ variables $x_1,\ldots,x_n$.
Recall that
$$
f_{i,n}(x_1,\ldots,x_n)=\int_X f_i(x_1,\ldots,x_n, y)\, \gamma(dy),
$$
where we write vectors in $\mathbb{R}^\infty$ in the form
$(x_1,\ldots,x_n,y)$, $y=(y_1,y_2,\ldots)\in \mathbb{R}^\infty$.
It is well-known that each $f_{i,n}$ is a polynomial of degree~$d$
(see \cite[Proposition 5.4.5 and Proposition~5.10.6]{GM}).
Moreover, the polynomials $f_{i,n}$ converge to $f_i$ almost everywhere
and in all Sobolev norms
(see \cite[Corollary 3.5.2 and Proposition 5.4.5]{GM}). Therefore, for the corresponding
mappings $f_n=(f_{1,n},\ldots,f_{k,n})$ the integrals of $\Delta_{f_n}$
are not less than $a$
for all $n$ sufficiently large.
In addition, $\sigma_{f_{i,n}}\le \sigma_{f_{i}}\le b$.
This enables us to pass to the limit
$n\to\infty$ in the inequality in Theorem~\ref{th4.1}.
\end{proof}

Similarly we obtain the following result.

\begin{corollary}\label{c4.1}
Let $\gamma$ be a centered Radon Gaussian measure on a locally convex space~$X$.
Let $k,d\in \mathbb{N}$, $a>0$, $b>0$,  $\tau>0$ be fixed.
Then there exists a number
$C_1=C_1(d, k, a, b, \tau)$ such that, whenever
$$
f = (f_1, \ldots, f_k)\quad \hbox{and}\quad g = (g_1, \ldots, g_k)
$$
are mappings from $X$ to $\mathbb{R}^k$ such that
their components $f_i, g_i$ are $\gamma$-measurable polynomials of degree~$d$ with
$$
\int \Delta_f\, d\gamma\ge a,\quad
\int \Delta_g\, d\gamma\ge a, \quad \sigma_{f_i}\le b,\quad \sigma_{g_i}\le b,
\quad i=1,\ldots, k,
$$
one has
$$
d_{{\rm TV}} (\gamma\circ f^{-1}, \gamma\circ g^{-1})\le
C_1 d_{{\rm FM}} (\gamma\circ f^{-1}, \gamma\circ g^{-1})^{1/(4k(d-1)+1+\tau)}.
$$
\end{corollary}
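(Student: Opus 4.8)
The plan is to deduce this from the infinite-dimensional integration-by-parts inequality of Corollary~\ref{newcor1} together with the Fortet--Mourier form of the fractional Hardy--Landau--Littlewood inequality recorded in Remark~\ref{rem2.1}(i); this mirrors the way Theorem~\ref{t4.3} is obtained from Theorem~\ref{th4.1} and Theorem~\ref{t3.2}, with the finite-dimensional ingredients replaced by their infinite-dimensional analogs. Throughout put $\sigma:=\gamma\circ f^{-1}$, $\nu:=\gamma\circ g^{-1}$ and $\alpha:=1/(4k(d-1)+\tau)$.

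First I would apply Corollary~\ref{newcor1} to $f$ and to $g$ separately. Since $\int\Delta_f\,d\gamma\ge a$ gives $\int\Delta_f\,d\gamma>a/2$, and similarly for $g$, the corollary supplies a constant $C=C(d,k,a/2,b,\tau)$, depending only on $d,k,a,b,\tau$, with
$$
\int_X\partial_e\varphi(f(x))\,\gamma(dx)\le C\|\varphi\|_\infty^{\alpha}\|\partial_e\varphi\|_\infty^{1-\alpha},
\qquad
\int_X\partial_e\varphi(g(x))\,\gamma(dx)\le C\|\varphi\|_\infty^{\alpha}\|\partial_e\varphi\|_\infty^{1-\alpha}
$$
for every $\varphi\in C_b^\infty(\mathbb{R}^k)$ and every unit vector $e\in\mathbb{R}^k$. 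Hence $\sigma$ and $\nu$ both satisfy the hypothesis of Proposition~\ref{pro2.1} with this $C$, so $\sigma,\nu\in B^\alpha(\mathbb{R}^k)$ and $\|\sigma\|_{B^\alpha},\|\nu\|_{B^\alpha}\le 2^{1-\alpha}C$; since $\|\cdot\|_{B^\alpha}$ is a norm, $\|\sigma-\nu\|_{B^\alpha}\le 2^{2-\alpha}C$.

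Next I would invoke the Fortet--Mourier analog of Theorem~\ref{t3.2} from Remark~\ref{rem2.1}(i): for probability measures $\sigma,\nu\in B^\alpha(\mathbb{R}^k)$,
$$
\|\sigma-\nu\|_{{\rm TV}}\le\bigl(C(k,\alpha)\|\sigma-\nu\|_{B^\alpha}^{1/(1+\alpha)}+2^{1/(1+\alpha)}\bigr)\,d_{{\rm FM}}(\sigma,\nu)^{\alpha/(1+\alpha)}.
$$
A direct computation gives $\alpha/(1+\alpha)=1/(4k(d-1)+1+\tau)$, and substituting $\|\sigma-\nu\|_{B^\alpha}\le 2^{2-\alpha}C$ yields the claim with $C_1=C(k,\alpha)(2^{2-\alpha}C)^{1/(1+\alpha)}+2^{1/(1+\alpha)}$, which depends only on $d,k,a,b,\tau$.

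There is no real obstacle here: the essential point is that the constant in Corollary~\ref{newcor1}, inherited from the dimension-free constant of Theorem~\ref{th4.1}, does not depend on the number of variables, which is precisely what makes $\|\sigma-\nu\|_{B^\alpha}$ uniformly bounded. If one prefers to avoid Remark~\ref{rem2.1}(i) and use only the finite-dimensional Theorem~\ref{t4.3} with Remark~\ref{rem3.1}, an alternative is to reduce, via the Tsirelson isomorphism theorem, to $\gamma$ being the countable power of the standard Gaussian measure on $\mathbb{R}^\infty$, replace $f_i,g_i$ by their conditional expectations $f_{i,n},g_{i,n}$ onto the first $n$ coordinates (which are polynomials of degree $d$ with $\sigma_{f_{i,n}}\le\sigma_{f_i}\le b$ and $\int\Delta_{f_n}\,d\gamma_n\to\int\Delta_f\,d\gamma$, so $\ge a/2$ for large $n$, and likewise for $g$), apply Theorem~\ref{t4.3} and Remark~\ref{rem3.1} on $\mathbb{R}^n$, and let $n\to\infty$. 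The one thing needing a remark in that route is the limit passage: as $f_n\to f$ and $g_n\to g$ almost everywhere, the laws converge weakly, so $d_{{\rm FM}}(\gamma\circ f_n^{-1},\gamma\circ g_n^{-1})\to d_{{\rm FM}}(\gamma\circ f^{-1},\gamma\circ g^{-1})$ since $d_{{\rm FM}}$ metrizes weak convergence on $\mathbb{R}^k$, while $d_{{\rm TV}}$ is lower semicontinuous under weak convergence, being a supremum of the weakly continuous functionals $\mu\mapsto\int\varphi\,d\mu$, $\|\varphi\|_\infty\le1$.
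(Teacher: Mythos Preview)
Your proposal is correct. The paper itself gives no explicit proof, writing only ``Similarly we obtain the following result'' after Corollary~\ref{newcor1}; the intended argument is therefore the one you sketch as your alternative route---reduce via the Tsirelson isomorphism to $\mathbb{R}^\infty$, approximate by finite-dimensional conditional expectations, apply Theorem~\ref{t4.3} together with Remark~\ref{rem3.1}, and pass to the limit using weak convergence on the $d_{{\rm FM}}$ side and lower semicontinuity on the $d_{{\rm TV}}$ side. Your primary route, combining Corollary~\ref{newcor1} directly with Proposition~\ref{pro2.1} and the Fortet--Mourier inequality of Remark~\ref{rem2.1}(i), is a slight repackaging that avoids redoing the limit passage and is arguably cleaner; both arrive at the same constant structure and the same exponent $\alpha/(1+\alpha)=1/(4k(d-1)+1+\tau)$.
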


Along with Lemma \ref{moments-weak} this yields the following fact.

\begin{corollary}\label{c4.2}
Let $\gamma$ be a Radon Gaussian measure on a locally convex space~$X$.
Let $f_n=(f_{1,n},\ldots,f_{k,n})\colon\, X\to \mathbb{R}^k$ be a sequence of mappings
such that each $f_{j,n}$ is a $\gamma$-measurable polynomial of degree~$d$.
Suppose that the distributions $\gamma \circ f_n^{-1}$ converge weakly
on~$\mathbb R^k$ and there is $a>0$ such that for all $n\in\mathbb N$
$$
\int \Delta_{f_n}\,d\gamma >a.
$$
Then these measures also converge in variation
and, for every $\tau>0$, there exists a number
$C_2$, depending on $d, k, a, \tau$, and a common bound for the variances
of the components of~$f_n$,
such that
$$
d_{{\rm TV}} (\gamma\circ f_m^{-1}, \gamma\circ f_n^{-1})\le
C_2 d_{{\rm FM}} (\gamma\circ f_m^{-1}, \gamma\circ f_n^{-1})^{1/(4k(d-1)+1+\tau)}.
$$
\end{corollary}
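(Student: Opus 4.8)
The plan is to reduce everything to Corollary~\ref{c4.1}. First I would use weak convergence of the distributions $\gamma\circ f_n^{-1}$ to obtain a uniform bound on the variances of the components: a weakly convergent sequence of measures is uniformly tight, so Lemma~\ref{moments-weak} (applied with $p=2$) gives $\sup_n\int_{\mathbb{R}^k}|x|^2\,d(\gamma\circ f_n^{-1})<\infty$, and since $\sigma_{f_{j,n}}^2\le\int f_{j,n}^2\,d\gamma=\int x_j^2\,d(\gamma\circ f_n^{-1})\le\int|x|^2\,d(\gamma\circ f_n^{-1})$, we obtain a common bound $b:=\sup_n\max_{j\le k}\sigma_{f_{j,n}}<\infty$. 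With this $b$ and the given $a$, every pair $(f_m,f_n)$ satisfies the hypotheses of Corollary~\ref{c4.1}, which yields, for each $\tau>0$, a constant $C_2=C_2(d,k,a,b,\tau)$ such that
$$
d_{{\rm TV}}(\gamma\circ f_m^{-1},\gamma\circ f_n^{-1})\le C_2\,d_{{\rm FM}}(\gamma\circ f_m^{-1},\gamma\circ f_n^{-1})^{1/(4k(d-1)+1+\tau)}
$$
for all $m,n$. This is exactly the asserted inequality.

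Next I would derive convergence in variation from this bound. The Fortet--Mourier distance metrizes weak convergence of Borel probability measures on $\mathbb{R}^k$ (it is equivalent to the bounded-Lipschitz metric), so the weakly convergent sequence $\gamma\circ f_n^{-1}$ is Cauchy for $d_{{\rm FM}}$; by the displayed H\"older-type inequality it is then Cauchy for the total variation norm. Since the space of finite signed Borel measures on $\mathbb{R}^k$ is complete in the total variation norm, $\gamma\circ f_n^{-1}$ converges in variation to some finite measure, which, convergence in variation being stronger than weak convergence, coincides with the weak limit of the sequence.

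The substantive work has already been carried out in Theorem~\ref{th4.1}, Theorem~\ref{t3.2} and Corollary~\ref{c4.1}, so no step here is a real obstacle. The only points that need a little care are the elementary estimate bounding the variances $\sigma_{f_{j,n}}^2$ by the second moments of the image measures (so that the uniform tightness furnished by weak convergence translates into the uniform variance bound required by Corollary~\ref{c4.1}), and the invocation of the standard fact that $d_{{\rm FM}}$ generates the weak topology, which is precisely what allows the H\"older-type bound to upgrade weak Cauchyness to Cauchyness in variation.
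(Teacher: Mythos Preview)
Your proposal is correct and follows exactly the approach indicated in the paper, which simply notes that the corollary follows from Corollary~\ref{c4.1} ``along with Lemma~\ref{moments-weak}''. You have merely spelled out the details: Lemma~\ref{moments-weak} (via uniform tightness of the weakly convergent sequence) supplies the common variance bound~$b$ needed to invoke Corollary~\ref{c4.1}, and the fact that $d_{{\rm FM}}$ metrizes weak convergence then upgrades weak convergence to convergence in variation.
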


This is a multidimensional generalization of \cite[Theorem 3.1]{NP}
and an improvement of the rate of convergence as compared
to \cite[Theorem~4.1]{NNP}.

It is worth noting that, as was shown in \cite{NNP} extending
a~result from \cite{Kus},
 a~polynomial mapping $f$ from an infinite-dimensional space
with a Gaussian measure $\gamma$ to $\mathbb{R}^k$ has an absolutely continuous
distribution precisely when $\Delta_f$ is not zero a.e.
(equivalently, $\Delta_f>0$ on a positive measure set due to the $0-1$ law for
polynomials, see \cite[Proposition~5.10.10]{GM}). Moreover,
$\gamma\circ f^{-1}$ is not absolutely continuous precisely when there
is a polynomial $Q$ on $\mathbb{R}^k$ such that $Q(f)$ is a constant a.e.
Therefore,
the assumed
lower bound on the expectations of $\Delta_f$ and $\Delta_g$ is quite natural.

Combining Theorem \ref{th4.1} and Remark \ref{rem3}, one can obtain the following theorem,
which in a sense generalizes the Carbery--Wright inequality
(but the latter has been used in the proof).

\begin{corollary}
Let $k,d\in \mathbb{N}$, $a>0$, $b>0$,  $\tau>0$. Then there is
$C=C(d, k, a, b, \tau)$ such that if
$f = (f_1, \ldots, f_k)\colon\, \mathbb{R}^n\to\mathbb{R}^k$,
where each $f_i$ is a polynomial of degree~$d$, satisfies the conditions
$$
\int \Delta_f \, d\gamma_n\ge a,\quad
\max_{i\le k} \sigma_{f_i}\le b ,
$$
then
$$
\gamma_n(f\in A) \le C(d, k, a, b, \tau)\lambda_k(A)^{\theta},
\quad \theta=\frac{1}{4k^2(d-1)+\tau},
$$
where $\lambda_k$
is the standard Lebesgue measure on~$\mathbb{R}^k$.
\end{corollary}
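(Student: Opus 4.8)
The plan is to read off from Theorem~\ref{th4.1} that $\nu:=\gamma_n\circ f^{-1}$ lies in a suitable Nikol'skii--Besov class with a dimension-free norm bound, and then feed this into the embedding estimate recorded in Remark~\ref{rem3}.

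Fix $\tau>0$ and set $\theta=(4k^2(d-1)+\tau)^{-1}$. First I would choose an auxiliary parameter $\tau'\in(0,\tau/k)$ and apply Theorem~\ref{th4.1} with this $\tau'$ in place of its~$\tau$. Under the hypotheses $\int\Delta_f\,d\gamma_n\ge a$ and $\max_{i\le k}\sigma_{f_i}\le b$, the theorem (together with Proposition~\ref{pro2.1}) gives $\nu\in B^{\alpha}(\mathbb{R}^k)$ with
$$
\alpha=\frac{1}{4k(d-1)+\tau'},\qquad \|\nu\|_{B^\alpha}\le C(d,k,a,b,\tau'),
$$
and the constant here does not depend on~$n$.

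Next I would invoke the second (embedding-based) estimate of Remark~\ref{rem3}: for every $r<\alpha/k$ there is $C_2(k,\alpha,r)$ such that $\nu(A)\le C_2(k,\alpha,r)(\|\nu\|_{B^\alpha}+1)\lambda_k(A)^{r}$ for all Borel sets~$A$. The numerical point is that
$$
\frac{\alpha}{k}=\frac{1}{4k^2(d-1)+k\tau'}>\frac{1}{4k^2(d-1)+\tau}=\theta ,
$$
since $k\tau'<\tau$; hence we may take $r=\theta$ in the estimate. Substituting the bound on $\|\nu\|_{B^\alpha}$ from the previous step yields $\gamma_n(f\in A)=\nu(A)\le C(d,k,a,b,\tau)\lambda_k(A)^{\theta}$, which is the claim.

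The step deserving attention is not a genuine obstacle but a bookkeeping matter: one must check that the constant produced by Theorem~\ref{th4.1} and the constant $C_2(k,\alpha,r)$ coming from the Nikol'skii embedding~(\ref{nikolski-emb}) both depend only on $d,k,a,b,\tau$ (once $\tau'$ has been fixed as a function of $\tau$ and~$k$) and not on the number of variables~$n$; the former is already asserted in Theorem~\ref{th4.1}, and the latter is clear from the form of~(\ref{nikolski-emb}). One could alternatively use the first, elementary inequality in Remark~\ref{rem3}, which yields the exponent $(4k^2(d-1)+k\tau'+1)^{-1}$; letting $\tau'\to0$ this tends to $(4k^2(d-1)+1)^{-1}$, which already covers all $\tau\ge1$, but the embedding version is what is needed to reach the stated $\theta$ for small~$\tau$.
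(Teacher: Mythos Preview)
Your argument is correct and is exactly the approach indicated in the paper: combine Theorem~\ref{th4.1} (giving $\gamma_n\circ f^{-1}\in B^{\alpha}(\mathbb{R}^k)$ with $\alpha=(4k(d-1)+\tau')^{-1}$ and a constant independent of~$n$) with the embedding estimate of Remark~\ref{rem3}, and then choose $\tau'<\tau/k$ so that $\theta<\alpha/k$. The paper gives no further details beyond this combination, so your write-up in fact supplies the missing bookkeeping.
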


Let us mention a result from \cite{BZ15}
on distributions of multidimensional random vectors the components
of which are general functions belonging to the Sobolev classes $W^{p,2}(\gamma)$, where
$\gamma$ is a general centered Radon Gaussian measure.
Suppose we are given a sequence of mappings
$$
F_n=(F^1_n,\ldots,F^k_n)\colon\, X\to \mathbb{R}^k
$$
such that  $F^i_n\in W^{4k,2}(\gamma)$.
Let $\mu_n=\gamma\circ F_n^{-1}$.
The following theorem proved in \cite{BZ15}
is based on a simple observation
that by the compactness of the embedding of the space
$BV(U)$ of functions of bounded variation on a ball $U\subset \mathbb{R}^k$ to the space $L^1(U)$,
every weakly convergent sequence of nonnegative
measures $\mu_n$ on  $U$  with densities bounded in the norm of $BV(U)$
converges also in variation.
In order to obtain from this convergence in variation on the whole space,
it is necessary to add the uniform tightness of the measures~$\mu_n$,
i.e., the condition $\lim\limits_{R\to\infty} \sup_n \mu_n(\mathbb{R}^k\backslash U_R)=0$,
where $U_R$ is the closed ball of radius $R$  centered at the origin.
In our  situation the uniform tightness
follows from the estimate $\sup_{n,i} \|F_n^i\|_{L^1(\gamma)}<\infty$,
which gives the estimate
$$
\sup_n \int_{\mathbb{R}^k} |x|\, \mu_n(dx)<\infty.
$$
The assumption of the theorem is chosen in such a way that we are able to apply the indicated
reasoning not to the original sequence of induced measures $\mu_n$, but to some sequence
asymptotically approaching it.  For the reader's convenience and also taking
into account that the condition in \cite{BZ15} contains a misprint
(the considered norm in Theorem~2 and Corollary~1 in \cite{BZ15}
should be $\|F^i_n\|_{4d,2}$, not $\|F^i_n\|_{2d,2}$),
we include the proof that is not long.
Set
 $$
 \delta(\varepsilon)= \sup\limits_n \ \gamma(\Delta_{F_n}\le \varepsilon).
 $$

\begin{theorem}
Suppose that
$$
\sup_n \|F^i_n\|_{4k,2}<\infty \quad \hbox{and}\quad \lim\limits_{\varepsilon \to 0}\delta (\varepsilon) = 0.
$$
 Then the sequence of measures $\mu_n=\gamma\circ F_n^{-1}$ has a subsequence convergent in variation.
\end{theorem}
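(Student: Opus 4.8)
The plan is to mimic the proof of Theorem~\ref{th4.1}, with two substitutions: the Carbery--Wright inequality (which is not available for general Sobolev functions) is replaced by the hypothesis $\delta(\varepsilon)\to0$, and the fractional class $B^\alpha$ by $B^1(\mathbb{R}^k)=BV(\mathbb{R}^k)$. For $\varepsilon\in(0,1)$ let $\mu_n^\varepsilon$ be the finite nonnegative Borel measure on $\mathbb{R}^k$ obtained as the image under $F_n$ of the measure $\Delta_{F_n}(\Delta_{F_n}+\varepsilon)^{-1}\,d\gamma$, i.e.
\[
\int_{\mathbb{R}^k}\varphi\,d\mu_n^\varepsilon=\int_X\varphi(F_n)\,\frac{\Delta_{F_n}}{\Delta_{F_n}+\varepsilon}\,d\gamma,\qquad\varphi\in C_b^\infty(\mathbb{R}^k).
\]
Since $0\le\Delta_{F_n}/(\Delta_{F_n}+\varepsilon)<1$, we have $0\le\mu_n^\varepsilon\le\mu_n$ as measures, so for each fixed $\varepsilon$ the family $\{\mu_n^\varepsilon\}_n$ inherits the uniform tightness of $\{\mu_n\}_n$; the latter is uniformly tight because
\[
\int_{\mathbb{R}^k}|x|\,\mu_n(dx)=\int_X|F_n|\,d\gamma\le\sum_{i=1}^k\|F^i_n\|_{L^2(\gamma)}\le k\sup_{n,i}\|F^i_n\|_{4k,2}<\infty,
\]
whence $\sup_n\mu_n(\mathbb{R}^k\setminus U_R)\to0$ as $R\to\infty$ by Chebyshev's inequality. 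Moreover the truncation is cheap in total variation, uniformly in $n$: since $\mu_n-\mu_n^\varepsilon$ is the image under $F_n$ of $\varepsilon(\Delta_{F_n}+\varepsilon)^{-1}\,d\gamma$,
\[
\|\mu_n-\mu_n^\varepsilon\|_{{\rm TV}}\le\int_X\frac{\varepsilon}{\Delta_{F_n}+\varepsilon}\,d\gamma\le\gamma(\Delta_{F_n}\le\sqrt\varepsilon)+\sqrt\varepsilon\le\delta(\sqrt\varepsilon)+\sqrt\varepsilon,
\]
which tends to $0$ as $\varepsilon\to0$ by hypothesis.

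The decisive step is a uniform (in $n$) $B^1$-bound for $\mu_n^\varepsilon$ for each fixed $\varepsilon$. By Proposition~\ref{pro2.1} with $\alpha=1$ it is enough to show that $\int\partial_e\varphi\,d\mu_n^\varepsilon\le C(\varepsilon)\|\varphi\|_\infty$ for every unit vector $e\in\mathbb{R}^k$ and every $\varphi\in C_b^\infty(\mathbb{R}^k)$, with $C(\varepsilon)$ independent of~$n$. Arguing as in the proof of Theorem~\ref{th4.1}, one uses the identity $(\partial_e\varphi)(F_n)\,\Delta_{F_n}=\langle A_{F_n}e,v\rangle$ with $v=\bigl(\langle\nabla(\varphi\circ F_n),\nabla F^1_n\rangle,\dots,\langle\nabla(\varphi\circ F_n),\nabla F^k_n\rangle\bigr)$ and integrates by parts by means of the Ornstein--Uhlenbeck operator exactly as in~(\ref{lem3_3terms}), thereby writing $\int\partial_e\varphi(F_n)\,\Delta_{F_n}(\Delta_{F_n}+\varepsilon)^{-1}\,d\gamma$ as a sum of three integrals in which $\varphi\circ F_n$ occurs undifferentiated; bounding $(\Delta_{F_n}+\varepsilon)^{-1}\le\varepsilon^{-2}$ and $(\Delta_{F_n}+\varepsilon)^{-2}\le\varepsilon^{-2}$ (valid since $\varepsilon<1$), these are dominated by $\|\varphi\|_\infty$ times
\begin{multline*}
\varepsilon^{-2}\int\|A_{F_n}\|_{HS}\Bigl(\sum_i|LF^i_n|^2\Bigr)^{1/2}\,d\gamma
+\varepsilon^{-2}\int\|A_{F_n}\|_{HS}\,|\nabla\Delta_{F_n}|\Bigl(\sum_i|\nabla F^i_n|^2\Bigr)^{1/2}\,d\gamma\\
+\varepsilon^{-2}\int\sum_i|\nabla F^i_n|\,|\nabla h_i|\,d\gamma,
\end{multline*}
where $h=A_{F_n}e$. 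Now $\|A_{F_n}\|_{HS}$ is dominated by a polynomial of degree $2(k-1)$ in the gradients $\nabla F^i_n$, while $|\nabla\Delta_{F_n}|$ and $|\nabla h_i|$ involve also second derivatives, being of total degree at most $2k$; the degree $2k$ is attained only by $|\nabla\Delta_{F_n}|$, which is a sum of products of $2k-1$ factors $\nabla F^j_n$ and one factor $\nabla^2F^j_n$. Using the standard bound $\|LF^i_n\|_{L^p}\le C\|F^i_n\|_{p,2}$ and the generalized H\"older inequality, the hypothesis $\sup_n\|F^i_n\|_{4k,2}<\infty$ makes all three integrals finite and bounded by a constant depending only on $k$, $\varepsilon$ and $\sup_{n,i}\|F^i_n\|_{4k,2}$; the middle integral is the critical one, and the exponent $4k$ is exactly what makes $\|A_{F_n}\|_{HS}\,|\nabla\Delta_{F_n}|\,|\nabla F^i_n|$ integrable, whereas the smaller value $2k$ appearing in the misprint mentioned above would not suffice. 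All of these manipulations are justified by approximating the Sobolev functions $F^i_n$ by smooth cylindrical ones, as in the proof of Corollary~\ref{newcor1}. Hence $\mu_n^\varepsilon$ has a $B^1$-density and $\|\mu_n^\varepsilon\|_{{\rm TV}}+\|\mu_n^\varepsilon\|_{B^1}\le1+C(\varepsilon)$, a bound independent of~$n$.

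To finish, fix $\varepsilon_j\downarrow0$. For each $j$, the densities of $\{\mu_n^{\varepsilon_j}\}_n$ are bounded in $BV(U_R)$ for every ball $U_R$, hence precompact in $L^1(U_R)$ by the compactness of the embedding $BV(U_R)\hookrightarrow L^1(U_R)$; diagonalizing over an exhaustion of $\mathbb{R}^k$ by balls and using the uniform tightness of $\{\mu_n^{\varepsilon_j}\}_n$ (a consequence of $\mu_n^{\varepsilon_j}\le\mu_n$), one extracts a subsequence along which $\mu_n^{\varepsilon_j}$ converges in $L^1(\mathbb{R}^k)$. A further diagonal extraction over $j$ yields a single subsequence $\{\mu_{n_l}\}$ such that $\mu_{n_l}^{\varepsilon_j}$ converges in $L^1(\mathbb{R}^k)$, i.e.\ in total variation, as $l\to\infty$, for every~$j$. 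Given $\eta>0$, choose $j$ with $\delta(\sqrt{\varepsilon_j})+\sqrt{\varepsilon_j}<\eta/3$; then for all sufficiently large $l,l'$,
\[
d_{{\rm TV}}(\mu_{n_l},\mu_{n_{l'}})\le d_{{\rm TV}}(\mu_{n_l},\mu_{n_l}^{\varepsilon_j})+d_{{\rm TV}}(\mu_{n_l}^{\varepsilon_j},\mu_{n_{l'}}^{\varepsilon_j})+d_{{\rm TV}}(\mu_{n_{l'}}^{\varepsilon_j},\mu_{n_{l'}})<\eta,
\]
so $\{\mu_{n_l}\}$ is Cauchy, hence convergent, in the total variation norm, which is the assertion.

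The step I expect to be the main obstacle is the uniform $B^1$-bound for $\mu_n^\varepsilon$: one must perform the integration by parts of Theorem~\ref{th4.1} for non-polynomial Sobolev functions and then verify, through a careful bookkeeping of derivative orders and integrability exponents in the generalized H\"older inequality, that the exponent $4k$ in the hypothesis---and not a smaller one---is precisely what the critical term demands (this is the point affected by the misprint in~\cite{BZ15}). The remaining ingredients---the elementary total variation estimate, the compactness of $BV\hookrightarrow L^1$ on balls, and the diagonal/Cauchy argument---are routine.
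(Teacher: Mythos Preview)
Your proof is correct and follows essentially the same route as the paper's: you introduce the regularized measures $\mu_n^\varepsilon$ (the paper's $\nu_{n,\varepsilon}$), bound $\|\mu_n-\mu_n^\varepsilon\|_{\rm TV}$ via the hypothesis $\delta(\varepsilon)\to0$, establish a uniform-in-$n$ $BV$ bound on $\mu_n^\varepsilon$ through the same integration by parts against the Ornstein--Uhlenbeck operator, and conclude by the compact embedding $BV(U_R)\hookrightarrow L^1(U_R)$ together with a diagonal/Cauchy argument. The only stylistic difference is that the paper writes out the integration by parts directly and verifies explicitly that $\psi=a^n_{i,j}/(\Delta_n+\varepsilon^2)\in W^{s,1}(\gamma)$ with $s=4k/(4k-2)$ (this is where the exponent $4k$ is pinned down), whereas you package the same computation by quoting the three-term estimate~(\ref{lem3_3terms}) from Theorem~\ref{th4.1} and invoking Proposition~\ref{pro2.1}; the derivative bookkeeping and the H\"older counts are the same in both.
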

\begin{proof}
Let us consider the measures
$$
\nu_{n,\varepsilon}=\Bigl(\frac{\Delta_n}{\Delta_n+\varepsilon^2}\cdot\gamma\Bigr)\circ F_n^{-1},
\quad \Delta_n:=\Delta_{F_n}, \ \varepsilon>0.
$$
Let $\varphi\in C_0^\infty(\mathbb R^k)$.
Applying (\ref{inver}) and using the notation
$m^n_{i,j}$ and $a^n_{i,j}$ for the elements of $M_{F_n}$ and $A_{F_n}$,
respectively, we obtain
\begin{align}\label{dan-e}
\int_X \partial_{x_i} \varphi \, d \nu_{n,\varepsilon}&=
\int_X (\partial_{x_i} \varphi(F_n)) \frac{\Delta_n}{\Delta_n+\varepsilon^2}
\, d\gamma
=\int_X \sum_{j,l} \frac{ a_{i,j}^n}{\Delta_n+\varepsilon^2}
\, m_{j,l}^n (\partial_{x_l} \varphi)(F_n)  \, d\gamma  \notag
\\
&=\int_X \sum_{j} \frac{ a_{i,j}^n}{\Delta_n+\varepsilon^2} \,
\langle \nabla (\varphi \circ F_n), \nabla F^j_n\rangle_H  \, d\gamma.
\end{align}
It is known (see \cite[Section~5.8]{GM} or \cite[Section~4.2]{Shig})
that for every function $v$ in the second Sobolev class $W^{p,2}(\gamma)$, where $p>1$,
and every function $g\in W^{p',1}(\gamma)$, where $p'=p/(p-1)$,
 one has the following integration by parts  formula:
$$
\int_X \langle\nabla g, \nabla v\rangle_H\, d\gamma
=-\int_X g Lv\, d\gamma,
$$
where $Lv\in L^p(\gamma)$ is the extension of the Ornstein--Uhlenbeck operator to $W^{p,2}(\gamma)$.
Hence for all $g\in W^{qp',1}(\gamma)$ and $\psi\in W^{q'p',1}(\gamma)$
with  $q>1$, $q'=q/(q-1)$ we have (since $\psi g \in W^{p',1}(\gamma)$)
$$
 \int_X \langle\nabla g, \nabla v\rangle_H \psi\, d\gamma
 =-\int_X [g\psi Lv + g\langle\nabla \psi,\nabla v\rangle_H]\, d\gamma .
 $$
We are going to apply this formula to (\ref{dan-e}).
The hypothesis of the theorem implies that
$$
v=F^j_n \in W^{4k,2}(\gamma), \quad
g=\varphi \circ F_n \in W^{4k,1}(\gamma).
$$
To apply the integration by parts formula, we only need to ensure that
$$
\psi=\frac{ a_{i,j}^n}{\Delta_n+\varepsilon^2}\in W^{s,1}(\gamma),\quad s=\frac{4k}{4k-2}.
$$
The $L^s(\gamma)$-norm of $\psi$ is finite, since
$|\psi| \le |a^n_{i,j}|/\varepsilon^2$ and
$$
\|a^n_{i,j}\|_{L^s}  \le C \sum_{l,r\ne i,j} \| \langle\nabla F_n^l,\nabla F_n^r\rangle_H\|_{L^{s(k-1)}}
\,d\gamma \le
C \sum_{l}\|F^l_n\|_{s(2k-2),2}.
$$
The right-hand side is finite, because
$$
s\cdot(2k-2)=(4k(k-2))/(2k-1)<4k
\quad\hbox{and}\quad
\sup_n \|F^i_n\|_{4k,2}<\infty
$$
 by the assumption of the theorem.
Thus, $\psi \in L^s(\gamma)$.

Next, we show that $|\nabla \psi|_H \in L^s(\gamma)$.
Using the cofactor expansion for the determinant $\Delta_n$, we see that
$$
\nabla \Delta_n=\nabla \det {M_{F_n}}=
\sum_{i,j} \frac{\partial \det M_{F_n}}{\partial m^n_{i,j}} \nabla m^n_{i,j}=
\sum_{i,j}  a^n_{i,j} \nabla m^n_{i,j}
$$
and thus
$$
\nabla \psi=\sum_{i,j}\Bigl[ \frac{\nabla a^n_{i,j}}{\Delta_n+\varepsilon^2}+
\frac{a^n_{i,j}}{(\Delta_n+\varepsilon^2)^2} \sum_{k,r} a^n_{k,r}\nabla m^n_{k,r}\Bigr] .
$$
Similarly to the calculations above we prove that
$$
\bigl\| |\nabla a^n_{i,j}|_H \bigr\|_{L^s} \le
C \sum_{l}\|F^l_n\|_{s(2k-2),2}
$$
and
$$
\bigl\| |a^n_{i,j}a^n_{k,r}\nabla m^n_{k,r}|_H \bigr\|_{L^s}  \le
C \sum_{t}\|F^t_n\|_{s(4k-2),2}=C \sum_{t}\|F^t_n\|_{4k,2}.
$$
Thus, $|\nabla \psi|_H \in L^s(\gamma)$ and $\psi \in W^{s,1}(\gamma)$, as announced.

Applying the integration by parts formula to (\ref{dan-e}), we obtain
\begin{multline*}
\int_{\mathbb{R}^d} \partial_{x_i} \varphi \, d \nu_{n,\varepsilon}=
\sum_{j}
\int_X  \frac{ a_{i,j}^n}{\Delta_n+\varepsilon^2} \,
\langle\nabla (\varphi \circ F_n), \nabla F_j\rangle_H  \, d\gamma
\\
=
-\sum_{j} \int_X  \varphi (F_n)\, \frac{a_{i,j}^n}{\Delta_n+\varepsilon^2}
 \, L F_j  \, d\gamma
-
\sum_{j} \int_X  \varphi (F_n) \,\Bigl \langle\nabla  \frac{ a_{i,j}^n}{\Delta_n
+\varepsilon^2}, \nabla F_j \Bigr\rangle_H  \, d\gamma.
\end{multline*}
Hence the generalized partial derivatives of the measure $\nu_{n,\varepsilon}$
are the measures
$$
\sum_j \Bigl(
\frac{ a_{i,j}^n}{\Delta_n+\varepsilon^2} \, L F_j
 + \Bigl\langle\nabla \frac{ a_{i,j}^n}{\Delta_n+\varepsilon^2}, \nabla F_j \Bigr\rangle_H
\Bigr)\, \gamma \circ F_n^{-1}.
$$
Therefore,  the measure $\nu_{n,\varepsilon}$ has a density
$\varrho_{n,\varepsilon }$  of class $BV$ and its  $BV$-norm is dominated by
$$
1+
\Bigl \|\sum_j \Bigl(
\frac{ a_{i,j}^n}{\Delta_n+\varepsilon^2} \, L F_j
+ \Bigl\langle\nabla  \frac{ a_{i,j}^n}{\Delta_n+\varepsilon^2}, \nabla F_j \Bigr\rangle_H
\Bigr) \Bigr\|_{L^1(\gamma)}\le M(\varepsilon).
$$
It is known  that the
embedding $BV(U_R) \to L_1(U_R)$  is compact, where $U_R$ is the
ball of radius  $R$ centered at the origin in $\mathbb R^d$.
Hence there exists a subsequence $\{i_n\}$ such that $\{\varrho_{i_n, 1/m}\}$
converges in $L_1(U_m)$ for every $m\in \mathbb N$.

Let us estimate  $\| \nu_{i,\varepsilon}- \mu_i\|$ in the following way:
\begin{equation}\label{dif1}
\| \nu_{i,\varepsilon}- \mu_i\| = \int
\frac {\varepsilon^2} {\Delta_i+\varepsilon^2} \, d\gamma \le
 \varepsilon+ \gamma(\Delta_i \le \varepsilon) \le
\varepsilon+ \delta (\varepsilon).
\end{equation}
We observe that the family of measures $\{\nu_{i,\varepsilon}\}$, where
 $i\ge 1$, $\varepsilon>0$, is  uniformly tight.
 This follows by the  boundedness of $\{F_n$\} in $L^1(\gamma)$
and the Chebyshev inequality.

Let us now show  that the sequence of measures $\mu_{i_n}$ is fundamental in variation.
Let $\varepsilon>0$. Using the uniform tightness and (\ref{dif1})
we take $M$ such that
$$
\| \nu_{i,1/M}- \mu_i\| \le \varepsilon/5, \quad
\nu_{i,\delta}(\mathbb R^d \backslash U_{M})
\le \varepsilon / 5 \quad \forall \delta>0.
$$
Next, we take $N$ such that for all $n,m>N$ we obtain
$$
\|\varrho_{i_n, 1/M}-\varrho_{i_m, 1/M}\|_{L_1(U_{M})} \le \varepsilon / 5.
$$
 Then for all $n,m>N$ we have
\begin{align*}
\|\mu_{i_n}-\mu_{i_m} \| &\le \|\nu_{i_n,1/M}-\nu_{i_m,1/M} \|
+ \frac{2\varepsilon}{5}
=\|\varrho_{i_n,1/M}-\varrho_{i_m,1/M} \|_{L_1({\mathbb R^d})}
+ \frac{2\varepsilon}{5}
 \\
& \le \|\varrho_{i_n, 1/M}-\varrho_{i_m, 1/M}\|_{L_1(U_{M})}
+  \frac{4\varepsilon}{5} \le \varepsilon,
\end{align*}
The theorem is proved.
\end{proof}

\begin{corollary}
If a sequence $\{F^i_n\}$ is bounded in $W^{4k,2}(\gamma)$ and
$\delta (\varepsilon) \to 0$ and the distributions of $F_n$ converge weakly, then
they converge in variation.
\end{corollary}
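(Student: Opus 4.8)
The plan is to deduce this from the preceding theorem by the standard subsequence argument for convergence in a metric space. First I would recall that the total variation distance is a metric on the space of bounded Borel measures on $\mathbb{R}^k$, so in order to prove that the whole sequence $\mu_n=\gamma\circ F_n^{-1}$ converges in variation it suffices to show that every subsequence of $\{\mu_n\}$ contains a further subsequence that converges in variation, and that all these limits coincide.

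Next I would observe that any subsequence $\{F_{n_j}\}$ again satisfies the hypotheses of the preceding theorem: it is still bounded in $W^{4k,2}(\gamma)$, and since the quantity $\delta(\varepsilon)=\sup_n\gamma(\Delta_{F_n}\le\varepsilon)$ already involves the supremum over all indices, the analogous quantity built from the subsequence does not exceed $\delta(\varepsilon)$ and hence still tends to zero as $\varepsilon\to0$. Therefore the theorem applies to $\{F_{n_j}\}$ and yields a sub-subsequence $\{\mu_{n_{j_i}}\}$ that converges in variation to some bounded nonnegative measure $\nu$.

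Then I would use the elementary fact that convergence in total variation implies weak convergence: since by assumption the full sequence $\mu_n$ converges weakly to some measure $\mu$, the sub-subsequence $\mu_{n_{j_i}}$ converges weakly to $\mu$ as well, so $\nu=\mu$. Thus every subsequence of $\{\mu_n\}$ admits a further subsequence converging in variation to the one fixed limit $\mu$, and the subsequence criterion for metric convergence gives that the whole sequence $\mu_n$ converges to $\mu$ in variation, which is the assertion.

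I do not expect any real obstacle in this corollary; the only points that require (minor) care are the monotonicity of $\delta(\varepsilon)$ under passing to subsequences and the implication ``convergence in variation $\Rightarrow$ weak convergence'', both of which are immediate, so the argument is essentially a one-line reduction to the preceding theorem.
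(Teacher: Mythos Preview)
Your argument is correct and is exactly the standard subsequence reduction the paper has in mind; the paper states the corollary without proof, treating it as an immediate consequence of the preceding theorem via precisely this ``every subsequence has a further subsequence converging to the common weak limit'' trick.
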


This corollary provides another proof of
the already known fact
that if we have  $F^i_n \in \mathcal{P}_d $ and  $\|\Delta_n\|_1 \ge \beta>0$ and
the sequence of  distributions of $F_n$ converges weakly,  then it converges in variation.

\section{The one-dimensional case}

In the one-dimensional case (i.e., $k=1$) one can obtain some better estimates.
They are derived from the following theorem that replaces Theorem \ref{th4.1}
in this case
and a similar result in Theorem~\ref{t-new} that yields an even better fractional order
at the Kantorovich norm (namely, $1/(d+1)$),
but at the cost of a worse constant. As above, $\gamma_n$ is the standard Gaussian
measure on~$\mathbb{R}^n$.

\begin{theorem}\label{t5.1}
Let $d\in \mathbb{N}$, $\tau>0$. Then there is a number
$C(d,\tau)>0$ such that, whenever
$f\colon\,\mathbb{R}^n\to\mathbb{R}$ is a polynomial of degree~$d$,
for all $\varphi\in C_b^\infty(\mathbb{R}^1)$
one has
$$
\int_{\mathbb{R}^n}
 \varphi'(f(x))\, \gamma_n(dx)\le C(d,\tau)
 \sigma_f^{-\alpha} \|\varphi\|_\infty^{\alpha} \|\varphi'\|_{\infty}^{1-\alpha},
\quad \alpha=\frac{1}{2d-2+\tau}.
$$
Therefore, $\gamma_n\circ f^{-1}$ belongs to the Nikol'skii--Besov  class~$B^\alpha(\mathbb{R})$
independent of~$n$, provided that $f$ is not a constant.
\end{theorem}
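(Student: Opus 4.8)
The plan is to imitate the proof of Theorem~\ref{th4.1} in the case $k=1$, but to exploit one special feature of the one-dimensional situation in the integration-by-parts step that doubles the resulting order of smoothness. I would begin with normalizations. Both sides of the asserted inequality are positively homogeneous of degree one in $\varphi$ (since $\alpha+(1-\alpha)=1$), so we may assume $\|\varphi\|_\infty=1$, the case of a constant $\varphi$ being trivial; replacing $\varphi$ by $t\mapsto\varphi(t+m_f)$ with $m_f=\int f\,d\gamma_n$ changes neither side and lets us assume $m_f=0$; and the substitution $f\mapsto f/\sigma_f$, $\varphi\mapsto\varphi(\sigma_f\,\cdot\,)$ produces exactly the factor $\sigma_f^{-\alpha}$ (and is legitimate since $f$ may be assumed non-constant). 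Thus it suffices to treat a polynomial $f$ of degree $d$ with $\int f\,d\gamma_n=0$ and $\int f^2\,d\gamma_n=1$ and to prove $\int\varphi'(f)\,d\gamma_n\le C(d,\tau)\|\varphi'\|_\infty^{1-\alpha}$ with $\alpha=1/(2d-2+\tau)$. If $\|\varphi'\|_\infty\le1$ this is immediate because the left side is at most $\|\varphi'\|_\infty\le\|\varphi'\|_\infty^{1-\alpha}$, and if $d=1$ then $\gamma_n\circ f^{-1}$ is a nondegenerate Gaussian measure on the line and the claim is elementary; so assume $d\ge2$ and $\|\varphi'\|_\infty\ge1$, and fix $\varepsilon\in(0,1)$ to be chosen at the end.

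Set $\Delta_f:=|\nabla f|^2$ (which is $\det M_f$ for $k=1$), a measurable polynomial of degree $\le 2(d-1)$; by the Gaussian Poincar\'e inequality $\int\Delta_f\,d\gamma_n\ge\sigma_f^2=1$. I would split
\[
\int\varphi'(f)\,d\gamma_n=\int\varphi'(f)\frac{\Delta_f}{\Delta_f+\varepsilon}\,d\gamma_n+\varepsilon\int\frac{\varphi'(f)}{\Delta_f+\varepsilon}\,d\gamma_n .
\]
For the second term, the Carbery--Wright estimate used exactly as in (\ref{CaW4.4}) with $p=1$ and $\beta=\tfrac1{2(d-1)}$ gives $\int(\Delta_f+\varepsilon)^{-1}\,d\gamma_n\le c(1,d)\varepsilon^{-1+\beta}$, so that term is at most $c(1,d)\|\varphi'\|_\infty\varepsilon^{\beta}$. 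For the first term, since $k=1$ we have $\varphi'(f)\Delta_f=\langle\nabla(\varphi\circ f),\nabla f\rangle$, and integrating by parts against the Ornstein--Uhlenbeck operator with $u=(\varphi\circ f)(\Delta_f+\varepsilon)^{-1}$, using $\nabla(\Delta_f+\varepsilon)^{-1}=-(\Delta_f+\varepsilon)^{-2}\nabla\Delta_f$, the first term equals
\[
-\int(\varphi\circ f)\frac{Lf}{\Delta_f+\varepsilon}\,d\gamma_n+\int(\varphi\circ f)\frac{\langle\nabla\Delta_f,\nabla f\rangle}{(\Delta_f+\varepsilon)^2}\,d\gamma_n .
\]

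The one essential point — where the one-dimensional argument beats the naive $k=1$ specialization of Theorem~\ref{th4.1} — is the treatment of $\langle\nabla\Delta_f,\nabla f\rangle$. Rather than bounding it by $|\nabla\Delta_f|\,|\nabla f|$ (which removes only the factor $\Delta_f^{1/2}$ and leaves $(\Delta_f+\varepsilon)^{-3/2}$), I would use $\nabla\Delta_f=2(\nabla^2 f)(\nabla f)$, where $\nabla^2 f$ is the Hessian of $f$ on $H$, to get
\[
|\langle\nabla\Delta_f,\nabla f\rangle|=2|\langle(\nabla^2 f)\nabla f,\nabla f\rangle|\le 2\|\nabla^2 f\|_{HS}\,|\nabla f|^2=2\|\nabla^2 f\|_{HS}\Delta_f\le 2\|\nabla^2 f\|_{HS}(\Delta_f+\varepsilon),
\]
which turns $(\Delta_f+\varepsilon)^{-2}$ into $(\Delta_f+\varepsilon)^{-1}$. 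Then, using $\|\varphi\|_\infty=1$, H\"older's inequality with $q>1$ and $q'=q/(q-1)$, and once more (\ref{CaW4.4}), the first term is bounded by
\[
\Bigl(\int(\Delta_f+\varepsilon)^{-q}\,d\gamma_n\Bigr)^{1/q}\bigl(\|Lf\|_{q'}+2\bigl\|\|\nabla^2 f\|_{HS}\bigr\|_{q'}\bigr)\le C(q,d)\varepsilon^{-1+\beta/q},
\]
where $\|Lf\|_{q'}$ and $\bigl\|\|\nabla^2 f\|_{HS}\bigr\|_{q'}$ are finite and bounded by a constant depending only on $d$ and $q'$ by the equivalence of the Sobolev norms (and of all $L^p$-norms) on measurable polynomials of degree $\le d$ together with $\|f\|_2=1$.

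Combining the two estimates gives $\int\varphi'(f)\,d\gamma_n\le C(q,d)\varepsilon^{-1+\beta/q}+c(1,d)\|\varphi'\|_\infty\varepsilon^{\beta}$, and the choice $\varepsilon=\|\varphi'\|_\infty^{-\lambda}$ with $\lambda=(1+\beta-\beta/q)^{-1}\in(0,1)$ makes both terms equal to $\|\varphi'\|_\infty^{1-\lambda\beta}$, where $\lambda\beta=(1/\beta+1-1/q)^{-1}=(2d-2+1-1/q)^{-1}$; writing $\tau=(q-1)/q$ this is exactly $\alpha=(2d-2+\tau)^{-1}$, and $q\downarrow1$ gives $\tau\downarrow0$, while for $\tau\ge1$ one simply invokes the bound already proved for some $\tau'<1$ (stronger here because $\|\varphi'\|_\infty\ge1$). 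Undoing the normalizations yields the asserted inequality with $C(d,\tau)=C(q,d)+c(1,d)$ (up to the constants absorbed from the norms of $Lf$ and $\nabla^2 f$), and applying Proposition~\ref{pro2.1} with $k=1$ then gives $\gamma_n\circ f^{-1}\in B^\alpha(\mathbb{R})$ with $\alpha$ independent of $n$, hence membership in $B^\alpha(\mathbb{R})$ for every $\alpha<1/(2d-2)$. I expect the only genuinely subtle step to be the Hessian bound above: recognizing that $\langle\nabla\Delta_f,\nabla f\rangle$ carries a full factor $\Delta_f$ rather than merely $\Delta_f^{1/2}$ is precisely what lifts the exponent from $1/(4(d-1))$ (what the $k=1$ case of Theorem~\ref{th4.1} yields) to $1/(2(d-1))$; the remaining estimates are routine applications of Carbery--Wright, H\"older's inequality, and the hypercontractive equivalence of norms.
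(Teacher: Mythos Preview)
Your proof is correct and follows essentially the same route as the paper. The splitting $\varphi'(f)=\varphi'(f)\Delta_f/(\Delta_f+\varepsilon)+\varepsilon\varphi'(f)/(\Delta_f+\varepsilon)$, the integration by parts against~$L$, the application of Carbery--Wright as in~(\ref{CaW4.4}), the H\"older step with exponent~$q$, and the optimization in~$\varepsilon$ with $\tau=(q-1)/q$ all match the paper's argument; your Hessian observation $|\langle\nabla\Delta_f,\nabla f\rangle|\le 2\|\nabla^2 f\|_{HS}\Delta_f$ is precisely the (unannotated) passage in~(\ref{ek5.2}) from $\langle D^2 f\cdot\nabla f,\nabla f\rangle/(\Delta_f+\varepsilon)^2$ to $\|D^2 f\|_{HS}/(\Delta_f+\varepsilon)$, and your reduction to $\sigma_f=1$ at the outset is the same rescaling the paper performs at the end via $\psi(t)=\varphi(t\sigma_f^{-1})$.
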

\begin{proof}
We can assume that $\|\varphi\|_\infty\le 1$.
Fix $\varepsilon>0$ (which will to be chosen later).
The integral that we want  to estimate equals
(we again omit indication of $\mathbb{R}^n$ in the integrals below)
\begin{equation} \label{ek5.1}
\begin{split}
\int \varphi'(f(x))\, \gamma_n(dx)=&
\int \varphi'(f(x))\frac{\langle\nabla f(x),\nabla f(x)\rangle }
{\langle\nabla f(x),\nabla f(x)\rangle+\varepsilon}\, \gamma_n(dx)
\\
&+\varepsilon\int \frac{\varphi'(f(x))}{\langle\nabla f(x),\nabla f(x)\rangle+\varepsilon}
\, \gamma_n(dx).
\end{split}
\end{equation}
Let us estimate every term.
For the first term, integrating by parts, we have
\begin{equation}\label{ek5.2}
\begin{split}
\int \varphi'(f) & \frac{\langle\nabla f,\nabla f\rangle}{\langle\nabla f,\nabla f\rangle
+\varepsilon}\, d\gamma_n=
\int \frac{\langle\nabla \varphi\circ f, \nabla f\rangle} {\langle\nabla f,\nabla f\rangle
+\varepsilon}\, d\gamma_n
\\
=-&\int\varphi(f) \Bigl(\frac{ Lf}{\langle\nabla f,\nabla f\rangle
+\varepsilon}
-\frac{\langle D^2 f\cdot \nabla f,\nabla f\rangle}{(\langle\nabla f,\nabla f\rangle+\varepsilon)^2}
\Bigr)\, d\gamma_n
\\
&\le
\int \frac{| Lf|}{\langle\nabla f,
\nabla f\rangle+\varepsilon} \, \gamma_n +
\int \frac{\|D^2 f\|_{HS}}{\langle\nabla f,\nabla f\rangle+\varepsilon}\, d\gamma_n
\\
&\le
\bigl(\|Lf\|_{L^{q'}(\gamma_n)}
+\|D^2 f\|_{L^{q'}(\gamma_n)}\bigr)
\biggl(\int \bigl(\langle\nabla f,\nabla f\rangle
+\varepsilon\bigr)^{-q}\, d\gamma_n\biggr)^{1/q},
\end{split}
\end{equation}
where $q>1$. Set
$$
\beta=\frac{1}{2(d-1)}.
$$
Using inequality (\ref{rv-Poin}) and the equivalence of the Sobolev and
$L^p$-norms of polynomials of degree~$d$, we obtain that
$$
\| Lf\|_{L^{q'}(\gamma_n)}
+\|D^2 f\|_{L^{q'}(\gamma_n)}\le C(d,q)\sigma_f.
$$
Using (\ref{CaW4.4}), we obtain that the last expression in (\ref{ek5.2})
is not greater than
$$
C(d,q)\sigma_f\varepsilon^{-1+\beta/q}
\biggl(\int \langle\nabla f,\nabla f\rangle\, d\gamma_n\biggr)^{-\beta/q},
$$
which by the Poincar\'e inequality is not greater than
$$
c_1(d,q)\sigma_f^{1-2\beta/q}\varepsilon^{-1+\beta/q}.
$$
Now let us estimate the second term in the right-hand side of (\ref{ek5.1}).
As above, using (\ref{CaW4.4}) and the Poincar\'e inequality, we obtain
$$
\int (\langle\nabla f,\nabla f\rangle+\varepsilon)^{-1}\, d\gamma_n\le
c(d)\sigma_f^{-2\beta}\varepsilon^{-1+\beta}.
$$
Therefore,
$$
\varepsilon\int \frac{\varphi'(f)}{\langle\nabla f,\nabla f\rangle+\varepsilon}
\, d\gamma_n \le
 \\
\|\varphi'\|_\infty c(d)\sigma_f^{-2\beta}\varepsilon^{\beta}.
$$
Let
$$
\tau=\frac{q-1}{q}, \quad
\varepsilon = \|\varphi'\|_\infty^\omega,
\quad  \omega={-\frac{1}{1+\beta\tau}}.
$$
Then  for (\ref{ek5.1}) we have the bound
$$
\int \varphi'(f(x))\, \gamma_n(dx)\le (c_1(d,q)\sigma_f^{1-2\beta/q}
+ c(d)\sigma_f^{-2\beta})\|\varphi'\|_\infty^{1-\alpha},
\quad \alpha={\frac{1}{2d-2+\tau}}.
$$
We now take the function $\psi(t) = \varphi(t\sigma_f^{-1})$.
Using the above inequality for the polynomial $f\cdot\sigma_f^{-1}$, we can write
\begin{multline*}
\int \psi'(f(x))\, \gamma_n(dx) = \sigma_f^{-1}\int \varphi'(f(x)\sigma_f^{-1})\, \gamma_n(dx)
\\
\le \sigma_f^{-1}(c_1(d,q) + c(d))\|\varphi'\|_\infty^{1-\alpha}=
\sigma_f^{-\alpha}(c_1(d,q) + c(d))\|\psi'\|_\infty^{1-\alpha}.
\end{multline*}
Since $\tau$ can be taken as small as we wish, the theorem is proved.
\end{proof}

The last assertion about membership in Nikol'skii--Besov  classes is improved below.
Similarly to the multidimensional case, the following theorem is obtained
on the basis of the previous theorem.

\begin{theorem}\label{t5.1b}
Let $d\in \mathbb{N}$, $a>0$, $\tau>0$. Then there is a number
$C=C(d, a, \tau)>0$ such that, whenever $f$ and $g$ are
real polynomials on $\mathbb{R}^n$ of degree~$d$ with
$\sigma_{f}, \sigma_g\ge a$
one has
$$
\|\gamma_n\circ f^{-1} - \gamma_n\circ g^{-1}\|_{{\rm TV}} \le
C d_{{\rm K}} (\gamma_n\circ f^{-1}, \gamma_n\circ g^{-1})^\theta, \quad \theta=\frac{1}{2d-1+\tau}.
$$
\end{theorem}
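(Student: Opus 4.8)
The plan is to deduce this from Theorem~\ref{t5.1} together with the one-dimensional case of the fractional Hardy--Landau--Littlewood inequality, Theorem~\ref{t3.2}, in exact parallel with the way Theorem~\ref{t4.3} was obtained from Theorem~\ref{th4.1} and Theorem~\ref{t3.2}. Write $\mu=\gamma_n\circ f^{-1}$ and $\nu=\gamma_n\circ g^{-1}$; being images of a Gaussian measure under polynomials, these are Borel probability measures on $\mathbb{R}$ with finite moments of all orders, so $d_{{\rm K}}(\mu,\nu)$ is well defined, and if $\mu=\nu$ there is nothing to prove. We may also assume $d\ge2$: for $d=1$ the maps $f,g$ are non-constant affine, $\mu$ and $\nu$ are non-degenerate Gaussian, and the assertion is elementary (and holds with any exponent $\theta\le1$). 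Fix $\tau>0$ and put $\alpha=\frac{1}{2d-2+\tau}$; since $d\ge2$ we have $\alpha\in(0,1)$.

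First I would convert Theorem~\ref{t5.1} into a bound on $\|\mu-\nu\|_{B^\alpha}$. By Theorem~\ref{t5.1}, for every $\varphi\in C_b^\infty(\mathbb{R})$ one has
$$
\int_{\mathbb{R}^n}\varphi'(f(x))\,\gamma_n(dx)\le C(d,\tau)\,\sigma_f^{-\alpha}\,\|\varphi\|_\infty^{\alpha}\|\varphi'\|_\infty^{1-\alpha},
$$
which is exactly the hypothesis of Proposition~\ref{pro2.1} (with $k=1$, $e=1$) for the measure $\mu$, with constant $C(d,\tau)\sigma_f^{-\alpha}$; hence $\|\mu\|_{B^\alpha}\le 2^{1-\alpha}C(d,\tau)\sigma_f^{-\alpha}$, and likewise for $\nu$. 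Using $\sigma_f,\sigma_g\ge a$ and the triangle inequality for $\|\cdot\|_{B^\alpha}$,
$$
\|\mu-\nu\|_{B^\alpha}\le\|\mu\|_{B^\alpha}+\|\nu\|_{B^\alpha}\le 2^{2-\alpha}C(d,\tau)\,a^{-\alpha}=:C_1(d,a,\tau).
$$

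Then I would apply Theorem~\ref{t3.2} with $k=1$ to the probability measures $\mu,\nu\in B^\alpha(\mathbb{R})$, which gives
$$
\|\mu-\nu\|_{{\rm TV}}\le C(1,\alpha)\,\|\mu-\nu\|_{B^\alpha}^{1/(1+\alpha)}\,d_{{\rm K}}(\mu,\nu)^{\alpha/(1+\alpha)},
$$
where $C(1,\alpha)=1+\int_{\mathbb{R}}|x|^\alpha\,\gamma_1(dx)\le 1+\int_{\mathbb{R}}(1+x^2)\,\gamma_1(dx)=3$, the bound being uniform in $\alpha\in(0,1]$ since $|x|^\alpha\le 1+x^2$ there. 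Substituting $\|\mu-\nu\|_{B^\alpha}\le C_1(d,a,\tau)$ and computing
$$
\frac{\alpha}{1+\alpha}=\frac{1/(2d-2+\tau)}{1+1/(2d-2+\tau)}=\frac{1}{2d-1+\tau}
$$
gives the claimed estimate with $\theta=\frac{1}{2d-1+\tau}$ and $C=3\,C_1(d,a,\tau)^{1/(1+\alpha)}$.

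I do not expect a genuine obstacle: the entire analytic load rests on Theorems~\ref{t5.1} and~\ref{t3.2}, and the present statement is their mechanical combination. The only things to verify are the uniform boundedness of $C(1,\alpha)$ for $\alpha\in(0,1]$ (immediate) and the role of the hypothesis $\sigma_f,\sigma_g\ge a$ --- it is precisely what turns the $\sigma_f$-dependent factor produced by Theorem~\ref{t5.1} into a constant, and the right-hand side genuinely degenerates as $\sigma_f\to0$. Finally, exactly as in Remark~\ref{rem3.1}, one may replace $d_{{\rm K}}$ by $d_{{\rm FM}}$ at the cost of another constant by using Remark~\ref{rem2.1} in place of Theorem~\ref{t3.2}.
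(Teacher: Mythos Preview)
Your proposal is correct and follows exactly the route the paper intends: the paper simply states that Theorem~\ref{t5.1b} ``is obtained on the basis of the previous theorem'' similarly to the multidimensional case (i.e., by combining Theorem~\ref{t5.1} with Proposition~\ref{pro2.1} and Theorem~\ref{t3.2}), and you have spelled out those steps faithfully, including the arithmetic $\alpha/(1+\alpha)=1/(2d-1+\tau)$ and the role of the hypothesis $\sigma_f,\sigma_g\ge a$.
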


As in the multidimensional case, we obtain the following
infinite-dimensional extensions.

\begin{corollary}\label{newcor5.1}
Let $\gamma$ be a centered Radon Gaussian measure on a locally convex space~$X$.
Let $d\in \mathbb{N}$, $\tau>0$. Then there is a number
$C(d,\tau)>0$ such that, whenever
$f\colon\, X\to\mathbb{R}$ is a $\gamma$-measurable polynomial of degree~$d$,
for all $\varphi\in C_b^\infty(\mathbb{R}^1)$ one has
$$
\int_{X}
 \varphi'(f(x))\, \gamma(dx)\le C(d,\tau) \sigma_f^{-\alpha}
 \|\varphi\|_\infty^{\alpha} \|\varphi'\|_{\infty}^{1-\alpha},
\quad \alpha=\frac{1}{2d-2+\tau}.
$$
Therefore, $\gamma\circ f^{-1}$ belongs
to the Nikol'skii--Besov  class~$B^\alpha(\mathbb{R})$, provided that $f$ is not a
constant a.e.
\end{corollary}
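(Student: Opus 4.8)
The plan is to reduce the assertion to the finite-dimensional estimate of Theorem~\ref{t5.1}, exactly in the spirit of the proof of Corollary~\ref{newcor1}. First I would invoke the Tsirelson isomorphism theorem (see \cite[Chapter~3]{GM}) to assume that $X=\mathbb{R}^\infty$ and $\gamma$ is the countable power of the standard Gaussian measure on the real line. Writing points of $\mathbb{R}^\infty$ as $(x_1,\ldots,x_n,y)$ with $y\in\mathbb{R}^\infty$, I would introduce the conditional expectations
$$
f_n(x_1,\ldots,x_n)=\int_X f(x_1,\ldots,x_n,y)\,\gamma(dy),
$$
which are again polynomials of degree at most $d$ and which converge to $f$ both $\gamma$-a.e. and in every Sobolev norm (see \cite[Proposition 5.4.5, Proposition 5.10.6, Corollary 3.5.2]{GM}); in particular $f_n\to f$ in $L^2(\gamma)$, so $\sigma_{f_n}\to\sigma_f$.

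Next, for a fixed $\varphi\in C_b^\infty(\mathbb{R})$ I would apply Theorem~\ref{t5.1} on $\mathbb{R}^n$ to each $f_n$ to get
$$
\int_X \varphi'(f_n)\,d\gamma\le C(d,\tau)\,\sigma_{f_n}^{-\alpha}\|\varphi\|_\infty^{\alpha}\|\varphi'\|_\infty^{1-\alpha},\qquad \alpha=\frac{1}{2d-2+\tau},
$$
with one constant $C(d,\tau)$ independent of $n$. If $f$ is constant a.e.\ the desired inequality is trivial (its right-hand side is infinite), so I may assume $\sigma_f>0$, whence $\sigma_{f_n}>0$ for all large $n$. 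Since $\varphi'$ is bounded and continuous and $f_n\to f$ a.e., the dominated convergence theorem gives $\int_X\varphi'(f_n)\,d\gamma\to\int_X\varphi'(f)\,d\gamma$, while $\sigma_{f_n}^{-\alpha}\to\sigma_f^{-\alpha}$; letting $n\to\infty$ yields the claimed bound.

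Finally, when $f$ is not constant a.e., the resulting inequality says precisely that the measure $\nu=\gamma\circ f^{-1}$ satisfies the hypothesis of Proposition~\ref{pro2.1} with $k=1$ and $C=C(d,\tau)\sigma_f^{-\alpha}$, so $\nu\in B^\alpha(\mathbb{R})$ with $\|\nu\|_{B^\alpha}\le 2^{1-\alpha}C(d,\tau)\sigma_f^{-\alpha}$. I expect the only delicate point to be the justification that the conditional expectations $f_n$ remain polynomials of degree $d$ and converge in the appropriate senses, together with the convergence $\sigma_{f_n}\to\sigma_f$ used to pass to the limit on the right-hand side; all of this is standard and is exactly what was used in the proof of Corollary~\ref{newcor1}.
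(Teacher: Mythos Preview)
Your proposal is correct and follows essentially the same route as the paper: the paper does not write out a separate proof for this corollary but explicitly says it is obtained ``as in the multidimensional case,'' i.e., by the Tsirelson reduction to $\mathbb{R}^\infty$ and approximation by finite-dimensional conditional expectations used in the proof of Corollary~\ref{newcor1}, followed by passage to the limit in the inequality of Theorem~\ref{t5.1}. Your handling of the limit (dominated convergence for $\int\varphi'(f_n)\,d\gamma$ and $\sigma_{f_n}\to\sigma_f$) and the final appeal to Proposition~\ref{pro2.1} are exactly what is needed.
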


\begin{corollary}\label{c5.1}
Let $\gamma$ be a centered Radon Gaussian measure on a locally convex space~$X$.
Let $d\in \mathbb{N}$, $a>0$, $\tau>0$. Then is a number
$C_1=C_1(d, a, \tau)$ such that, whenever $f$ and $g$ are
$\gamma$-measurable polynomials on $X$ of degree~$d$ with
$\sigma_{f}, \sigma_g\ge a$
one has
$$
\|\gamma\circ f^{-1} - \gamma\circ g^{-1}\|_{{\rm TV}} \le
C_1 d_{{\rm FM}} (\gamma\circ f^{-1}, \gamma\circ g^{-1})^{1/(2d-1+\tau)}.
$$
\end{corollary}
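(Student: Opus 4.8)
The plan is to deduce Corollary~\ref{c5.1} from the one-dimensional fractional smoothness estimate Corollary~\ref{newcor5.1} together with the Fortet--Mourier form of the fractional Hardy--Landau--Littlewood inequality in Remark~\ref{rem2.1}(i), mirroring the derivation of the multidimensional Corollary~\ref{c4.1} from Corollary~\ref{newcor1} and Remark~\ref{rem2.1}(i). Since the constants in Corollary~\ref{newcor5.1} depend only on $d$ and $\tau$, and not on $X$, $\gamma$, or the number of variables, no extra infinite-dimensional reduction is required: one works directly with $f$ and $g$ on~$X$.

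Fix $\tau>0$ and set $\alpha:=1/(2d-2+\tau)$. One may assume $\alpha\in(0,1]$, i.e.\ $2d-2+\tau\ge 1$ (which covers all $d\ge 2$, and also $d=1$ when $\tau\ge 1$); the remaining case $d=1$, $\tau<1$ is elementary and is disposed of at the end. Note that $\sigma_f,\sigma_g\ge a>0$ forces $f$ and $g$ to be non-constant, so $\gamma\circ f^{-1}$ and $\gamma\circ g^{-1}$ are absolutely continuous. By Corollary~\ref{newcor5.1}, for every $\varphi\in C_b^\infty(\mathbb{R})$,
\[
\int_X\varphi'(f(x))\,\gamma(dx)\le C(d,\tau)\,\sigma_f^{-\alpha}\,\|\varphi\|_\infty^{\alpha}\,\|\varphi'\|_\infty^{1-\alpha}\le C(d,\tau)\,a^{-\alpha}\,\|\varphi\|_\infty^{\alpha}\,\|\varphi'\|_\infty^{1-\alpha},
\]
and likewise for $g$ (using $\sigma_f,\sigma_g\ge a$). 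Proposition~\ref{pro2.1} with $k=1$ (applied to $e=\pm1$) then gives $\gamma\circ f^{-1},\gamma\circ g^{-1}\in B^\alpha(\mathbb{R})$ with $\|\gamma\circ f^{-1}\|_{B^\alpha},\|\gamma\circ g^{-1}\|_{B^\alpha}\le 2^{1-\alpha}C(d,\tau)a^{-\alpha}$, whence, by the triangle inequality for $\|\cdot\|_{B^\alpha}$,
\[
\|\gamma\circ f^{-1}-\gamma\circ g^{-1}\|_{B^\alpha}\le 2^{2-\alpha}C(d,\tau)\,a^{-\alpha}=:K(d,a,\tau).
\]

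Next I would apply Remark~\ref{rem2.1}(i) with $k=1$ to the probability measures $\sigma=\gamma\circ f^{-1}$ and $\nu=\gamma\circ g^{-1}$, which lie in $B^\alpha(\mathbb{R})$ by the previous step; no moment hypothesis is needed here because that estimate involves only $d_{{\rm FM}}$ (in fact both laws have all moments by the $L^p$-equivalence for polynomials of degree~$d$). This yields
\[
\|\gamma\circ f^{-1}-\gamma\circ g^{-1}\|_{{\rm TV}}\le\bigl(C(1,\alpha)\,K(d,a,\tau)^{1/(1+\alpha)}+2^{1/(1+\alpha)}\bigr)\,d_{{\rm FM}}\bigl(\gamma\circ f^{-1},\gamma\circ g^{-1}\bigr)^{\alpha/(1+\alpha)},
\]
where $C(1,\alpha)=1+\int_{\mathbb{R}}|x|^\alpha\,\gamma_1(dx)\le 1+\int_{\mathbb{R}}(1+|x|)\,\gamma_1(dx)=2+\sqrt{2/\pi}$ for all $\alpha\in(0,1]$. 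Since $1+\alpha=(2d-1+\tau)/(2d-2+\tau)$, we get $\alpha/(1+\alpha)=1/(2d-1+\tau)$, which is exactly the exponent in the statement, and the coefficient in front is a number $C_1=C_1(d,a,\tau)$ depending on $d$, $a$, $\tau$ only. This settles the case $\alpha\le 1$.

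In the leftover case $d=1$, $\tau<1$ the mappings $f$ and $g$ are affine, so $\gamma\circ f^{-1}$ and $\gamma\circ g^{-1}$ are nondegenerate Gaussian laws on $\mathbb{R}$ with variances at least $a^2$, and their total variation distance is at most a constant depending only on $a$ times their Kantorovich distance, which by Remark~\ref{rem2.1}(ii) is controlled by $d_{{\rm FM}}$ up to a logarithmic factor that is absorbed into the power $d_{{\rm FM}}^{1/(1+\tau)}$ when $d_{{\rm FM}}$ is small (the complementary range being trivial). There is no genuine obstacle in the argument, everything being already available; the one point that requires care is the bookkeeping keeping all constants functions of $d$, $a$, $\tau$ alone, which is automatic because the standard deviations enter only through the monotone factors $\sigma_f^{-\alpha},\sigma_g^{-\alpha}\le a^{-\alpha}$ under the hypothesis $\sigma_f,\sigma_g\ge a$. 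One could alternatively first establish the $d_{{\rm FM}}$-form of the finite-dimensional Theorem~\ref{t5.1b} and then pass to $X$ via Tsirelson's isomorphism theorem and finite-dimensional conditional expectations as in the proof of Corollary~\ref{newcor1}, but the route above is shorter.
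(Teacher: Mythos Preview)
Your argument is essentially the same as what the paper has in mind: it says only ``As in the multidimensional case, we obtain the following infinite-dimensional extensions,'' and the multidimensional template is precisely Corollary~\ref{newcor1} combined with Remark~\ref{rem2.1}(i), which is exactly what you do in dimension one using Corollary~\ref{newcor5.1}, Proposition~\ref{pro2.1}, and Remark~\ref{rem2.1}(i). The computation $\alpha/(1+\alpha)=1/(2d-1+\tau)$ is correct, and the constants depend only on $d,a,\tau$ since $\sigma_f,\sigma_g$ enter solely through $\sigma^{-\alpha}\le a^{-\alpha}$.

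One small gap worth noting: in your treatment of the leftover case $d=1,\ \tau<1$ you invoke Remark~\ref{rem2.1}(ii) to pass from $d_{\rm K}$ to $d_{\rm FM}$, but that remark explicitly requires an \emph{upper} bound $b$ on the variances, while Corollary~\ref{c5.1} assumes only the lower bound $\sigma_f,\sigma_g\ge a$. So that step is not justified as written. This is a marginal edge case (the paper's own Theorem~\ref{t5.1}, on which everything rests, uses $\beta=1/(2(d-1))$ and tacitly assumes $d\ge 2$), and for $d=1$ one can argue directly with the Gaussian densities: their $B^1$-norms are $O(1/\sigma)\le O(1/a)$, hence also their $B^\alpha$-norms for every $\alpha\in(0,1]$, and then Remark~\ref{rem2.1}(i) with $\alpha=1/\tau$ (now legitimate for $\tau\ge 1$) or a direct Gaussian computation for smaller $\tau$ closes the case without the variance upper bound. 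But apart from this bookkeeping detail, your proof is correct and aligned with the paper.
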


\begin{corollary}\label{c5.2}
Let $\gamma$ be a  Radon Gaussian measure on a locally convex space.
Let $\{f_n\}$ be a sequence of $\gamma$-measurable polynomials of degree~$d$.
Suppose that the distributions $\gamma \circ f_n^{-1}$ converge weakly
to an absolutely continuous measure $\nu$ on~$\mathbb R$.
Then they also converge in variation
and for every $\tau>0$ there exists a number
$C_2=C_2(d,\sigma_\nu, \tau)$ such that
$$
d_{{\rm TV}} (\gamma\circ f_m^{-1}, \gamma\circ f_n^{-1})\le
C_2 d_{{\rm FM}} (\gamma\circ f_m^{-1}, \gamma\circ f_n^{-1})^{1/(2d-1+\tau)}.
$$
\end{corollary}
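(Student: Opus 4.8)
The plan is to package Corollary~\ref{c5.1} with a uniform lower bound on the variances $\sigma_{f_n}$ and the fact that the Fortet--Mourier metric metrizes weak convergence; the structure parallels the derivation of Corollary~\ref{c4.2} from Corollary~\ref{c4.1}. Write $\mu_n:=\gamma\circ f_n^{-1}$. Since $\nu$ is absolutely continuous it is not a Dirac measure, so $\sigma_\nu>0$. The first step is to show $\sigma_{f_n}\to\sigma_\nu$: by Lemma~\ref{moments-weak} the second moments $\int x^2\,d\mu_n$ converge to $\int x^2\,d\nu$, and the same lemma with $p=1$ together with the standard uniform-integrability consequence of weak convergence plus convergence of $\int|x|\,d\mu_n$ gives $\int x\,d\mu_n\to\int x\,d\nu$; hence $\sigma_{f_n}^2=\int x^2\,d\mu_n-\bigl(\int x\,d\mu_n\bigr)^2\to\sigma_\nu^2$. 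Set $a:=\sigma_\nu/2$; then $\sigma_{f_n}\ge a$ for all $n\ge N$ for some $N$, and in particular each such $f_n$ is non-constant.

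The second step is a direct application of Corollary~\ref{c5.1} with this value of $a$: for all $m,n\ge N$ one obtains
$$
\|\gamma\circ f_m^{-1}-\gamma\circ f_n^{-1}\|_{{\rm TV}}\le
C_1(d,a,\tau)\,d_{{\rm FM}}(\gamma\circ f_m^{-1},\gamma\circ f_n^{-1})^{1/(2d-1+\tau)} ,
$$
and since $a=\sigma_\nu/2$, the constant $C_1(d,a,\tau)$ depends only on $d$, $\sigma_\nu$, $\tau$; this is the number $C_2(d,\sigma_\nu,\tau)$ in the statement, and the displayed bound is the asserted inequality.

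The third step extracts convergence in variation. The Fortet--Mourier distance metrizes weak convergence of Borel probability measures on $\mathbb{R}$ (the $C_b^\infty$ test functions in its definition may be replaced by bounded Lipschitz functions by mollification, which does not change the metric), so $\mu_n\to\nu$ weakly implies $d_{{\rm FM}}(\mu_n,\mu_m)\to0$ as $m,n\to\infty$. Feeding this into the estimate above shows that $\{\mu_n\}_{n\ge N}$ is Cauchy in the total variation norm; since the space of bounded measures is complete in that norm, $\mu_n$ converges in variation, and the variation limit necessarily coincides with the weak limit $\nu$.

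The only non-mechanical ingredient is the uniform lower bound $\sigma_{f_n}\ge a>0$, which is precisely where absolute continuity of $\nu$ (forcing $\sigma_\nu>0$) and Lemma~\ref{moments-weak} enter; the rest is bookkeeping. One minor caveat worth recording in the write-up: the inequality is proved for $m,n\ge N$, and the finitely many remaining pairs cannot be absorbed without letting the constant depend on the sequence, so the estimate is understood for all sufficiently large $m,n$ (equivalently, after discarding finitely many terms of the sequence).
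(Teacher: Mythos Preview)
Your proposal is correct and follows essentially the same route as the paper: the Remark immediately after Corollary~\ref{c5.2} spells out exactly the argument you give---weak convergence plus Lemma~\ref{moments-weak} forces $\sigma_{f_n}\to\sigma_\nu>0$, furnishing the uniform lower bound $\sigma_{f_n}\ge a$ needed to invoke Corollary~\ref{c5.1}, after which convergence in variation follows since $d_{{\rm FM}}$ metrizes weak convergence. Your closing caveat that the displayed estimate holds for $m,n\ge N$ is apt and consistent with the paper's ``for $n$ large enough'' phrasing.
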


The second result provides an estimate with a better rate of convergence
than the one obtained in Theorem 3.1 in~\cite{NP}.

\begin{remark}
\rm
Note that in this case, unlike Corollary \ref{c4.2}, there is no condition
that the integrals of $\Delta_{f_n}$ are separated from zero.
In the case $k=1$, due to the Poincar\'e inequality,
this condition is replaced by $\sigma_{f_n}\ge a>0$
(see Corollary \ref{c4.1} and Corollary \ref{c5.1}),
which is automatically satisfied for $n$ large enough, because for
the distributions of polynomials
weak convergence implies convergence of all moments (see Lemma \ref{moments-weak}).
\end{remark}

We now show that one can even achieve the exponent $\theta=1/(d+1)$, however,
with a worse constant than before (depending on some special norm of the gradient).
Actually, by using a different approach
in the one-dimensional case, it is still possible to prove this result
with the same type of constant (depending on the variance), which will be done for general
convex measures in a forthcoming paper of the second author. We include a somewhat less sharp
result below, because its proof is much simpler.

Let $\gamma$ be a centered Radon Gaussian measure on a locally
convex space $X$ and let $H$ be its Cameron--Martin space.
For a function $f\in W^{2,1}(\gamma)$ we define $\|\nabla f\|_{*}$ by
\begin{equation}\label{norm}
\|\nabla f\|_{*}^2:= \sup\limits_{|e|_H=1}\int_X |\partial_e f|^2 d\gamma.
\end{equation}
It is clear that $\|\nabla f\|_{*}>0$ once $f$ is not a constant and that
$\|\nabla f\|_{*}\le \|\, |\nabla f|_H\, \|_{L^2(\gamma)}$.

\begin{theorem}\label{t-new}
Let $\gamma_n$ be the standard Gaussian measure on $\mathbb{R}^n$.
Then, for every $d\in\mathbb{N}$,
 there is a number $C(d)$ that depends only on~$d$ such that,
for every polynomial $f$ of degree $d$ on $\mathbb{R}^n$
 and every function $\varphi\in C_b^\infty(\mathbb{R})$, we have
$$
\int_{\mathbb{R}^n}
\varphi'(f)\, d\gamma_n\le C(d) \|\nabla f\|_{*}^{-1/d}
\|\varphi\|_\infty^{1/d} \|\varphi'\|_\infty^{1-1/d}.
$$
Therefore, $\gamma_n\circ f^{-1}$ belongs to the Nikol'skii--Besov  class
$B^{1/d}(\mathbb{R})$ provided that $f$ is not a constant.
\end{theorem}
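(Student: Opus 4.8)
The plan is to verify the hypothesis of Proposition~\ref{pro2.1} with $k=1$ and $\alpha=1/d$: it suffices to show that $\int_{\mathbb{R}^n}\varphi'(f)\,d\gamma_n\le C(d)\|\nabla f\|_{*}^{-1/d}\|\varphi\|_\infty^{1/d}\|\varphi'\|_\infty^{1-1/d}$ for every $\varphi\in C_b^\infty(\mathbb{R})$ (applying this also to $-\varphi$ covers both unit vectors of $\mathbb{R}^1$). First I would reduce: replacing $f$ by $f/\|\nabla f\|_{*}$ and $\varphi$ by $t\mapsto\varphi(t\|\nabla f\|_{*})$ brings us to the case $\|\nabla f\|_{*}=1$, and by homogeneity in $\varphi$ one may take $\|\varphi\|_\infty=1$; if $\|\varphi'\|_\infty\le1$ the bound is trivial, so assume $\|\varphi'\|_\infty\ge1$. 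Since $\gamma_n$ is rotation invariant and an orthogonal change of variables preserves the degree, we may assume the supremum in~(\ref{norm}) is attained at $e_1$, so that $u:=\partial_1 f$ is a polynomial of degree $\le d-1$ with $\int u^2\,d\gamma_n=1$. Fix $\varepsilon\in(0,1]$ and split
$$
\int\varphi'(f)\,d\gamma_n=\int\varphi'(f)\,\frac{u^2}{u^2+\varepsilon}\,d\gamma_n+\varepsilon\int\frac{\varphi'(f)}{u^2+\varepsilon}\,d\gamma_n .
$$

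For the first summand I would use Fubini: writing $x=(s,x')$ it equals $\int_{\mathbb{R}^{n-1}}\bigl(\int_{\mathbb{R}}\varphi'(g(s))\,g'(s)^2(g'(s)^2+\varepsilon)^{-1}\rho(s)\,ds\bigr)\gamma_{n-1}(dx')$, where $g(s)=f(s,x')$ has degree $\le d$ in $s$ and $\rho$ is the one–dimensional standard Gaussian density. For fixed $x'$ one integrates by parts in $s$, so the inner integral equals $-\int_{\mathbb{R}}\varphi(g)\frac{d}{ds}\bigl[g'\rho/(g'^2+\varepsilon)\bigr]\,ds$ and is at most $\|\varphi\|_\infty\bigl(\int_{\mathbb{R}}|g''|(g'^2+\varepsilon)^{-1}\rho\,ds+\tfrac12\varepsilon^{-1/2}\int_{\mathbb{R}}|\rho'|\,ds\bigr)$. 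The crucial estimate is the first term: since $g''$ has at most $d-2$ sign changes, $g'$ is monotone on at most $d-1$ intervals, hence $\arctan(g'/\sqrt\varepsilon)$ has total variation at most $(d-1)\pi$ on $\mathbb{R}$, and
$$
\int_{\mathbb{R}}\frac{|g''(s)|}{g'(s)^2+\varepsilon}\,\rho(s)\,ds=\frac1{\sqrt\varepsilon}\int_{\mathbb{R}}\Bigl|\frac{d}{ds}\arctan\frac{g'(s)}{\sqrt\varepsilon}\Bigr|\rho(s)\,ds\le\frac{\|\rho\|_\infty}{\sqrt\varepsilon}\,{\rm Var}_{\mathbb{R}}\Bigl(\arctan\frac{g'}{\sqrt\varepsilon}\Bigr)\le\frac{(d-1)\pi}{\sqrt{2\pi\varepsilon}} .
$$
This bound is uniform in $x'$, so integrating in $x'$ against the probability measure $\gamma_{n-1}$ shows that the first summand is at most $C(d)\varepsilon^{-1/2}\|\varphi\|_\infty$.

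For the second summand, $u=\partial_1 f$ has degree $\le d-1$ and $\|u\|_{L^2(\gamma_n)}=1$, so by the equivalence of $L^p$–norms of polynomials of bounded degree $\|u\|_{L^1(\gamma_n)}\ge c(d)>0$; the Carbery--Wright inequality~(\ref{cw-ineq}) then gives $\gamma_n(|u|\le t)\le C(d)t^{1/(d-1)}$ and, by the layer–cake computation as in~(\ref{CaW4.4}), $\int(u^2+\varepsilon)^{-1}\,d\gamma_n\le C(d)\varepsilon^{-1+1/(2(d-1))}$, so the second summand is at most $C(d)\|\varphi'\|_\infty\varepsilon^{1/(2(d-1))}$. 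Choosing $\varepsilon=\|\varphi'\|_\infty^{-2(d-1)/d}\in(0,1]$ balances the two contributions and yields $\int\varphi'(f)\,d\gamma_n\le C(d)\|\varphi'\|_\infty^{1-1/d}$; undoing the normalizations gives the asserted inequality, and Proposition~\ref{pro2.1} then gives $\gamma_n\circ f^{-1}\in B^{1/d}(\mathbb{R})$. The step I expect to be the real obstacle is the one–dimensional estimate for $\int|g''|(g'^2+\varepsilon)^{-1}\rho\,ds$: it is exactly here that one exploits that the \emph{number} of monotonicity intervals of $g'$ (equivalently, of sign changes of $g''$) is controlled by $d$ even though $|g'|$ itself is not, which is what upgrades the exponent from the $1/(2d-2)$ of Theorem~\ref{t5.1} to $1/d$, and dictates both the use of $\|\nabla f\|_{*}$ and the need to work one variable at a time. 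The degenerate case $d=1$ ($f$ affine, $\gamma_n\circ f^{-1}$ a non-degenerate Gaussian) is immediate, and since all constants are independent of $n$ the statement passes to $\gamma$-measurable polynomials on abstract Gaussian spaces by the usual approximation through conditional expectations.
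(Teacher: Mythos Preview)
Your proof is correct and follows essentially the same approach as the paper's: the same splitting via $(\partial_e f)^2/((\partial_e f)^2+\varepsilon)$, the same Carbery--Wright bound for the second piece, and the same key one-variable estimate for $\int |g''|(g'^2+\varepsilon)^{-1}\rho\,ds$ via the total variation of $\arctan(g'/\sqrt{\varepsilon})$ (the paper records this bound as $\varepsilon^{-1/2}(3d\sqrt{\pi/2}+1)$ after the rescaling $g=f\varepsilon^{-1/2}$, with the arctan\slash change-of-variable computation spelled out elsewhere in the paper). The only cosmetic differences are that you normalize $\|\nabla f\|_*=1$ at the start while the paper does so at the end, and you apply Fubini before the one-variable integration by parts while the paper uses the Gaussian integration-by-parts formula directly in $\mathbb{R}^n$ (producing the $\langle x,e\rangle$ term, which is your $\rho'$ term).
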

\begin{proof}
We can assume that $\|\varphi\|_\infty\le 1$.
Let $e\in \mathbb{R}^n$, $|e|=1$.
We have
$$
\int\varphi'(f)\, d\gamma
=\int\Bigl[\frac{(\partial_ef)^2}{(\partial_ef)^2+\varepsilon}\varphi'(f)\Bigr]\, d\gamma
+ \varepsilon\int \frac{\varphi'(f)}{(\partial_ef)^2+\varepsilon}\, d\gamma.
$$
Writing the first term as
$$
\int\frac{(\partial_ef)^2}{(\partial_ef)^2+\varepsilon}\varphi'(f)\, d\gamma=
\int\partial_e(\varphi(f))\frac{\partial_ef}{(\partial_ef)^2+\varepsilon}\, d\gamma
$$
and integrating by parts in the last expression, we obtain
\begin{align*}
-\int\varphi(f) & \Bigl[\frac{\partial^2_ef + \langle x, e\rangle\partial_ef}{(\partial_ef)^2+\varepsilon}-
2\frac{(\partial_ef)^2\partial^2_ef}{((\partial_ef)^2+\varepsilon)^2}\Bigr]\, d\gamma
\\
&\le
3\int\Bigl|\frac{\partial^2_ef}{(\partial_ef)^2+\varepsilon}\Bigr|\, d\gamma+
\int\Bigl|\frac{\partial_ef}{(\partial_ef)^2+\varepsilon}\Bigr|\, |\langle x, e\rangle|
\, d\gamma
\\
&=\varepsilon^{-1/2}
\biggl(3\int\Bigl|\frac{\partial^2_eg}{(\partial_eg)^2+1}\Bigr|d\gamma+
\int\Bigl|\frac{\partial_eg}{(\partial_eg)^2+1}\Bigr||\langle x, e\rangle|\,
d\gamma\biggr)
\\
&\le \varepsilon^{-1/2}(3d\sqrt{\pi/2}+1),
\end{align*}
where $g=f\varepsilon^{-1/2}$.
By using the Carbery--Wright inequality (\ref{cw-ineq}) in the same manner
as in derivation of (\ref{CaW4.4}) we have
$$
\int \frac{\varphi'(f)}{(\partial_ef)^2+\varepsilon}\, d\gamma
\le cd \|\varphi'\|_\infty
 \|\partial_ef\|_2^{-1/(d-1)}\varepsilon^{-1+1/(2d-2)}
\int_0^\infty(s+1)^{-2}s^{1/(2d-2)}\, ds.
$$
Thus,
$$
\int\varphi'(f)d\gamma\le c_1(d)\|\partial_ef\|_2^{-1/(d-1)}
\|\varphi'\|_\infty\varepsilon^{1/(2d-2)}+c_2(d)\varepsilon^{-1/2}.
$$
Taking $\varepsilon=\|\varphi'\|_\infty^{-2+2/d}$, we obtain
$$
\int\varphi'(f)\, d\gamma\le(c_1(d)\|\partial_ef\|_2^{-1/(d-1)}+c_2(d))\|\varphi'\|_\infty^{1-1/d}.
$$
Since this estimate is valid for every vector $e\in \mathbb{R}^n$ of unit length, we have
$$
\int\varphi'(f)\, d\gamma\le
\Bigl(c_1(d)\|\nabla f\|_{*}^{-1/(d-1)}+c_2(d)\Bigr)\|\varphi'\|_\infty^{1-1/d}.
$$
Applying the last estimate to the polynomial $f\|\nabla f\|_{*}^{-1}$, we find that
$$
\int\varphi'(f\|\nabla f\|_{*}^{-1})\, d\gamma\le
(c_1(d)+c_2(d))\|\varphi'\|_\infty^{1-1/d}.
$$
Let $\psi(t)=\varphi(t\|\nabla f\|_{*}^{-1})$, $C(d)=c_1(d)+c_2(d)$. Then
\begin{align*}
\int\psi'(f)\, d\gamma
&=\|\nabla f\|_{*}^{-1}\int\varphi'(f\|\nabla f\|_{*}^{-1})\, d\gamma
\\
&
\le
C(d)\|\nabla f\|_{*}^{-1}\|\varphi'\|_\infty^{1-1/d}=
C(d)\|\nabla f\|_{*}^{-1/d}\|\psi'\|_\infty^{1-1/d},
\end{align*}
which proves the theorem.
\end{proof}

\begin{corollary}\label{ic-new}
Let $\gamma$ be a centered Radon Gaussian measure on a locally convex space.
Then, for every $d\in\mathbb{N}$,
 there is a number $C(d)$ that depends only on~$d$ such that,
for every $\gamma$-measurable polynomial $f$ of degree $d$ on $X$
 and every function $\varphi\in C_b^\infty(\mathbb{R})$, we have
$$
\int_{X}
\varphi'(f)\, d\gamma\le C(d) \|\nabla f\|_{*}^{-1/d}
\|\varphi\|_\infty^{1/d} \|\varphi'\|_\infty^{1-1/d}.
$$
Therefore, $\gamma\circ f^{-1}$ belongs to the Nikol'skii--Besov  class
$B^{1/d}(\mathbb{R})$ provided that $f$ is not a constant a.e.
\end{corollary}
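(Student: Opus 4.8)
The plan is to deduce this from the finite-dimensional Theorem~\ref{t-new} by the same device used in the proof of Corollary~\ref{newcor1}. By the Tsirelson isomorphism theorem (see \cite[Chapter~3]{GM}) one may assume that $\gamma$ is the countable power of the standard Gaussian measure on the real line, defined on $\mathbb{R}^\infty$, with $\{e_n\}$ the usual basis of $l^2=H$. Let $f_n$ be the conditional expectation of $f$ with respect to the $\sigma$-field generated by the first $n$ coordinates $x_1,\ldots,x_n$. As recalled in the proof of Corollary~\ref{newcor1}, $f_n$ is a polynomial of degree~$d$ in $x_1,\ldots,x_n$ (see \cite[Proposition~5.4.5 and Proposition~5.10.6]{GM}) and $f_n\to f$ both almost everywhere and in every Sobolev norm (see \cite[Corollary~3.5.2 and Proposition~5.4.5]{GM}). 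Since $f_n$ depends only on $x_1,\ldots,x_n$, one has $\int_X \varphi'(f_n)\, d\gamma=\int_{\mathbb{R}^n}\varphi'(f_n)\, d\gamma_n$, and the quantity $\|\nabla f_n\|_{*}$ from (\ref{norm}) equals the corresponding expression for $f_n$ viewed as a polynomial on $\mathbb{R}^n$ (the supremum over the unit sphere of $H$ is attained on the span of $e_1,\ldots,e_n$). Hence Theorem~\ref{t-new} applies and gives, for every~$n$,
$$
\int_X \varphi'(f_n)\, d\gamma\le C(d)\,\|\nabla f_n\|_{*}^{-1/d}\,\|\varphi\|_\infty^{1/d}\,\|\varphi'\|_\infty^{1-1/d}.
$$

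Next I would pass to the limit as $n\to\infty$. The left-hand side converges to $\int_X\varphi'(f)\, d\gamma$ by dominated convergence, since $\varphi'$ is bounded and continuous and $f_n\to f$ almost everywhere. For the right-hand side it suffices to show $\|\nabla f_n\|_{*}\to\|\nabla f\|_{*}$. I would obtain this from the pointwise bound $|\partial_e f_n-\partial_e f|=|\langle\nabla f_n-\nabla f,e\rangle_H|\le|\nabla f_n-\nabla f|_H$, valid for every unit vector $e\in H$, which yields $\bigl|\,\|\partial_e f_n\|_{L^2(\gamma)}-\|\partial_e f\|_{L^2(\gamma)}\,\bigr|\le\bigl\|\,|\nabla f_n-\nabla f|_H\,\bigr\|_{L^2(\gamma)}$ with a right-hand side independent of~$e$ that tends to~$0$ by the Sobolev convergence; taking the supremum over~$e$ gives the claim. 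Since $f$ is not constant a.e., $\|\nabla f\|_{*}>0$, so $\|\nabla f_n\|_{*}^{-1/d}\to\|\nabla f\|_{*}^{-1/d}<\infty$ and the displayed inequality passes to the limit, which is the asserted estimate.

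Finally, I would derive the membership $\gamma\circ f^{-1}\in B^{1/d}(\mathbb{R})$ by applying Proposition~\ref{pro2.1} with $k=1$ and $\alpha=1/d$ to $\nu=\gamma\circ f^{-1}$, since the inequality just established is exactly the hypothesis of that proposition. I expect the only genuine obstacle to be the passage to the limit in $\|\nabla f_n\|_{*}$: these are suprema over the infinite-dimensional unit sphere of~$H$, so one cannot test against a single direction, but the uniform-in-$e$ estimate above upgrades the $L^2$-convergence of the gradients to convergence of the suprema, after which everything reduces to the finite-dimensional Theorem~\ref{t-new} exactly as in Corollary~\ref{newcor1}.
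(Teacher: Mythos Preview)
Your proposal is correct and is essentially the paper's intended argument: the corollary is stated without proof and is meant to follow from the finite-dimensional Theorem~\ref{t-new} by the same conditional-expectation approximation used in Corollary~\ref{newcor1}. Your additional care in showing $\|\nabla f_n\|_{*}\to\|\nabla f\|_{*}$ via the uniform-in-$e$ bound $\bigl|\|\partial_e f_n\|_{L^2}-\|\partial_e f\|_{L^2}\bigr|\le\bigl\||\nabla f_n-\nabla f|_H\bigr\|_{L^2}$ is exactly the right way to handle the supremum over the unit sphere of~$H$, and the final appeal to Proposition~\ref{pro2.1} is likewise the paper's mechanism for passing to membership in~$B^{1/d}$.
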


From the previous theorem one derives
the following assertion which is an analog of Theorem \ref{t5.1b} in this case.

\begin{theorem}\label{t5.2}
Let $d\in \mathbb{N}$, $a>0$. Then there is a number
$C=C(d, a)$ such that, whenever $f$ and $g$ are
real polynomials on $\mathbb{R}^n$ of degree~$d$ with
$\|\nabla f\|_{*}\ge a$ and $\|\nabla g\|_{*}\ge a$,
one has
$$
\|\gamma_n\circ f^{-1} - \gamma_n\circ g^{-1}\|_{{\rm TV}} \le
C d_{{\rm K}} (\gamma_n\circ f^{-1}, \gamma_n\circ g^{-1})^\theta, \quad \theta=\frac{1}{d+1}.
$$
\end{theorem}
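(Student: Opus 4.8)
The plan is to combine Theorem~\ref{t-new} with the abstract fractional Hardy--Landau--Littlewood estimate of Theorem~\ref{t3.2}, exactly in the way Theorem~\ref{t5.1b} is derived from Theorem~\ref{t5.1}. First I would observe that the hypothesis $\|\nabla f\|_{*}\ge a$, $\|\nabla g\|_{*}\ge a$, together with Theorem~\ref{t-new}, gives for every $\varphi\in C_b^\infty(\mathbb{R})$ and every unit $e\in\mathbb{R}^1$ (here $e=\pm1$, so this is just $\varphi'$) the bound
$$
\int_{\mathbb{R}^n}\varphi'(f)\,d\gamma_n\le C(d)a^{-1/d}\|\varphi\|_\infty^{1/d}\|\varphi'\|_\infty^{1-1/d},
$$
and the same for $g$. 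Since $\partial_e\varphi=\pm\varphi'$ and $\|\varphi\|_\infty$, $\|\varphi'\|_\infty$ are unchanged under $\varphi\mapsto-\varphi$, this is precisely the hypothesis of Proposition~\ref{pro2.1} with $k=1$, $\alpha=1/d$ and $C=C(d)a^{-1/d}$. Hence both measures $\mu:=\gamma_n\circ f^{-1}$ and $\nu:=\gamma_n\circ g^{-1}$ lie in $B^{1/d}(\mathbb{R})$ with
$$
\|\mu\|_{B^{1/d}}+\|\nu\|_{B^{1/d}}\le 2^{1-1/d}\cdot 2\,C(d)a^{-1/d}=:C_1(d,a).
$$

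Next I would apply Theorem~\ref{t3.2} with $k=1$, $\alpha=1/d$ to the pair $\mu,\nu$. Both are Borel probability measures (images of $\gamma_n$ under polynomial maps, with absolutely continuous laws since $f,g$ are nonconstant), so all hypotheses hold, and we get
$$
d_{{\rm TV}}(\mu,\nu)\le C(1,1/d)\,\|\mu-\nu\|_{B^{1/d}}^{1/(1+1/d)}\,d_{{\rm K}}(\mu,\nu)^{(1/d)/(1+1/d)}
= C(1,1/d)\,\|\mu-\nu\|_{B^{1/d}}^{d/(d+1)}\,d_{{\rm K}}(\mu,\nu)^{1/(d+1)}.
$$
By the triangle inequality for the (semi)norm $\|\cdot\|_{B^{1/d}}$ we have $\|\mu-\nu\|_{B^{1/d}}\le\|\mu\|_{B^{1/d}}+\|\nu\|_{B^{1/d}}\le C_1(d,a)$, so the factor $\|\mu-\nu\|_{B^{1/d}}^{d/(d+1)}$ is bounded by a constant depending only on $d$ and $a$. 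Absorbing $C(1,1/d)$ and $C_1(d,a)^{d/(d+1)}$ into a single constant $C=C(d,a)$ yields
$$
d_{{\rm TV}}(\gamma_n\circ f^{-1},\gamma_n\circ g^{-1})\le C(d,a)\,d_{{\rm K}}(\gamma_n\circ f^{-1},\gamma_n\circ g^{-1})^{1/(d+1)},
$$
which is the assertion with $\theta=1/(d+1)$.

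I do not expect a genuine obstacle here: the argument is a direct concatenation of already-established results, and the only points requiring a word of care are (i) checking that $\mu,\nu$ are honest probability measures in $B^{1/d}(\mathbb{R})$ — immediate from Proposition~\ref{pro2.1} and the nonconstancy of $f,g$ — and (ii) noting that $C(1,1/d)=1+\int_{\mathbb{R}}|x|^{1/d}\gamma_1(dx)$ is a finite absolute constant. If one wanted the infinite-dimensional version (Corollary~\ref{ic-new} in place of Theorem~\ref{t-new}) the same proof works verbatim, with $\|\nabla f\|_{*}$ in the Cameron--Martin sense; the finite-dimensional statement needs nothing beyond what is displayed above.
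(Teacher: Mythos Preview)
Your proposal is correct and is exactly the argument the paper intends: the paper states that Theorem~\ref{t5.2} is derived from Theorem~\ref{t-new} as an analog of Theorem~\ref{t5.1b}, i.e., by feeding the $B^{1/d}$ bound from Theorem~\ref{t-new} (via Proposition~\ref{pro2.1}) into the fractional Hardy--Landau--Littlewood inequality of Theorem~\ref{t3.2} with $\alpha=1/d$, $k=1$. Your write-up fills in precisely these details.
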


It can be that the optimal power is $1/d$; the following simple example shows that one cannot
get any better exponent (and that the order $1/d$ of the
Nikol'skii--Besov class above is optimal).

\begin{example}
\rm
Let us consider the monomial $x^{d}$ with even $d$
on the real line with the standard Gaussian measure~$\gamma$.
Let $\varrho$ be its distribution density.
It is obvious that $\varrho(t)=0$ if $t<0$ and that
$\varrho$ is monotonically decreasing on $(0, +\infty)$.
Let us also consider $x^{d}-h$, $h>0$.
The Kantorovich distance between the laws of $x^{d}$ and $x^{d}-h$ equals $h$
and the variation distance is given by
$$
\int_{-\infty}^{+\infty}
 |\varrho(t-h) - \varrho(t)|\, dt = \int_0^h \varrho(t)\, dt
+ \int_h^{+\infty}(\varrho(t-h) - \varrho(t))\, dt=
2\gamma(|x|\le h^{1/d}).
$$
It is readily verified that the latter expression
for small $h$ behaves like~$h^{1/d}$.
\end{example}

\begin{remark}
\rm
It is still
unknown whether the set of distributions of polynomials of a fixed degree~$d$
is closed in the weak topology (equivalently, in the metrics $d_{{\rm K}}$
and~$d_{{\rm FM}}$). The answer is positive for $d=1$ (which is trivial)
and for $d=2$ (which was proved in \cite{Arcones} and~\cite{Sev}).
Some asymptotic properties of polynomial distributions are discussed in
\cite{AgB} and~\cite{B14}.
\end{remark}

\section{Bounds via $L^2$-norms}

In this section, $\gamma$ is the standard Gaussian measure on~$\mathbb{R}^n$
(in this case we also use the symbol~$\gamma_n$) or on~$\mathbb{R}^\infty$.
The following result was announced  in \cite{DavM}
(we present it in our terms; in \cite{DavM} multiple stochastic
integrals of order $d$ are used).

{\bf Theorem A.} Let $g\in\mathcal{H}_d$ and $g\ne0$. Then there is a constant $C(d, g)$
depending only on $d$ and $g$ such that for every $f\in\mathcal{H}_d$ one has
$$
\|\gamma\circ f^{-1} - \gamma\circ g^{-1}\|_{\rm TV}\le
C(d, g) \|f-g\|_2^{1/d}.
$$

The announcement does not contain details of proof and also
the form of dependence of  $C(d,g)$ on $g$ is not indicated.
In relation to this estimate   Nourdin and Poly \cite{NP} proved the following result
(also presented here in our terms).

{\bf Theorem B.} Let $d\in\mathbb{N}$, $a>0$, $b> 0$. Then there exists a number $C(d, a, b)>0$
such that for every pair of polynomials $f, g$ of degree~$d$ with $\sigma_f\in [a,b]$
one has
$$
\|\gamma\circ f^{-1} - \gamma\circ g^{-1}\|_{\rm TV}
\le C(d, a, b)\|f-g\|_2^{1/(2d)}.
$$

While the power of the $L^2$-norm in this theorem is twice smaller
(which makes the estimate worse) than in Theorem A,
Nourdin and Poly managed to clarify dependence of $C(d, g)$ on~$g$:
this constant depends only on the bounds for the variance.
In this section, we first prove an intermediate result between Theorem A and Theorem B
and then give its multidimensional extension.
The next theorem gives an analog of the Davydov--Martynova estimate
with a constant worse than in the Nourdin--Poly estimate, but with a better dependence on the $L^2$-norm
(which differs from the announcement in \cite{DavM}  by only a logarithmic factor).
We recall that $\|\cdot\|_*$  is defined by (\ref{norm}).

\begin{theorem}
There is a constant $c(d)$ depending only on $d$ such that
for every pair of polynomials $f, g$ of degree $d>1$ one has
$$
\|\gamma\circ f^{-1} - \gamma\circ g^{-1}\|_{\rm TV}\le
c(d)\bigl(\|\nabla g\|_*^{-1/(d-1)} +
\sigma_g + 1\bigr)\|f-g\|_2^{1/d}\Bigl(\bigl|\ln\|f-g\|_2\bigr|^{d/2}+1\Bigr).
$$
\end{theorem}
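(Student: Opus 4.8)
The plan is to run an interpolation–plus–integration–by–parts argument against a test function that has first been mollified at a scale $\delta$, and then to optimize $\delta$ against $\|f-g\|_2$, using the one–dimensional fractional regularity of Theorem~\ref{t-new} to absorb the mollification error. Set $u:=f-g$, $\mu_f:=\gamma_n\circ f^{-1}$, $\mu_g:=\gamma_n\circ g^{-1}$, and $G:=\|\nabla g\|_*^{-1/(d-1)}+1$. Two reductions come first. Since $\|\mu_f-\mu_g\|_{\rm TV}\le2$, if $\|u\|_2>c\|\nabla g\|_*$ for a suitable small $c=c(d)$ then $G\,\|u\|_2^{1/d}\ge2$ once the overall constant is large, so the bound is trivial; hence we may assume $\|u\|_2\le\min(1,c\|\nabla g\|_*)$, and then $\|\nabla u\|_*\le C(d)\|u\|_2$ by~(\ref{rv-Poin}), so $\|\partial_{e_0}g_t\|_2\ge\tfrac12\|\nabla g\|_*$ for all $t\in[0,1]$, where $g_t:=(1-t)g+tf$ and $e_0$ is a unit vector with $\|\partial_{e_0}g\|_2=\|\nabla g\|_*$. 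Next, replacing $g$ by $g+(m_f-m_g)$ costs at most $\|\mu_g\|_{B^{1/d}}\,|m_f-m_g|^{1/d}\le C(d)\,G\,\|u\|_2^{1/d}$ (shift estimate $\|(\mu_g)_a-\mu_g\|_{\rm TV}\le\|\mu_g\|_{B^{1/d}}|a|^{1/d}$, plus Theorem~\ref{t-new} and Proposition~\ref{pro2.1}), and then by translation invariance of the total variation we subtract the common mean; so from now on $f,g$ have mean zero, $\|u\|_2\le1$ unchanged, $\|\nabla g\|_*$ unchanged.

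Fix $\varphi\in C_b^\infty(\mathbb R)$ with $\|\varphi\|_\infty\le1$ and put $\varphi_\delta=\varphi*\gamma_1^\delta$, so $\|\varphi_\delta\|_\infty\le1$, $\|\varphi_\delta'\|_\infty\le\sqrt{2/\pi}\,\delta^{-1}$. Split $\int(\varphi(f)-\varphi(g))\,d\gamma_n$ as $\int(\varphi-\varphi_\delta)\,d\mu_f+\int(\varphi_\delta(f)-\varphi_\delta(g))\,d\gamma_n-\int(\varphi-\varphi_\delta)\,d\mu_g$. The outer terms are each at most $\|\mu_f\|_{B^{1/d}}$ (resp.\ $\|\mu_g\|_{B^{1/d}}$) times $\int|y|^{1/d}\gamma_1^\delta(dy)=C(d)\delta^{1/d}$, and $\|\mu_f\|_{B^{1/d}},\|\mu_g\|_{B^{1/d}}\le C(d)G$ by Theorem~\ref{t-new} and Proposition~\ref{pro2.1} (using $\|\nabla f\|_*\ge\tfrac12\|\nabla g\|_*$). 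For the middle term, $\int(\varphi_\delta(f)-\varphi_\delta(g))\,d\gamma_n=\int_0^1\!\!\int\varphi_\delta'(g_t)\,u\,d\gamma_n\,dt$, so it suffices to show, uniformly in $t$ and for every smooth $\psi$ with $\|\psi\|_\infty\le1$,
\[
\Bigl|\int\psi'(g_t)\,u\,d\gamma_n\Bigr|\le C(d)\,\|u\|_2\,(G\|\psi'\|_\infty)^{1-1/d}\bigl(\log(2+G\|\psi'\|_\infty)\bigr)^{d/2}.
\]

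To prove this, fix $\eta\in(0,1)$, set $r(s):=s/(s^2+\eta)$ (so $|r|\le(2\sqrt\eta)^{-1}$), and write $\int\psi'(g_t)u\,d\gamma_n=\int\partial_{e_0}(\psi(g_t))\,u\,r(\partial_{e_0}g_t)\,d\gamma_n+\eta\int\psi'(g_t)\,u\,\bigl((\partial_{e_0}g_t)^2+\eta\bigr)^{-1}d\gamma_n$. Integrating by parts against $\gamma_n$ in the first integral and using $\|\psi\|_\infty\le1$ dominates it by $\int|\partial_{e_0}u|\,|r(\partial_{e_0}g_t)|\,d\gamma_n+\int|u|\,|\partial_{e_0}(r(\partial_{e_0}g_t))|\,d\gamma_n+\int|u|\,|r(\partial_{e_0}g_t)|\,|\langle x,e_0\rangle|\,d\gamma_n$. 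The first and third terms are at most $(2\sqrt\eta)^{-1}$ times $\|\partial_{e_0}u\|_2\le C(d)\|u\|_2$ (by~(\ref{rv-Poin})) and $\|u\|_2\|\langle x,e_0\rangle\|_2=\|u\|_2$. The middle term is the crux: fibre $\gamma_n=\gamma_1\otimes\gamma_{n-1}$ along $e_0$; on each fibre $x_0\mapsto r(\partial_{e_0}g_t)$ is $r$ composed with a polynomial of degree $\le d-1$, hence piecewise monotone with $\le3d-3$ branches, on each of which its variation is $\le\eta^{-1/2}$, so (the monotone–branch device already used in the proof of Theorem~\ref{t-new}) $\int_{\mathbb R}|u|\,|\tfrac{d}{dx_0}r(\partial_{e_0}g_t)|\,\gamma_1(dx_0)\le(3d-3)\eta^{-1/2}\sup_{x_0}|u|\,(2\pi)^{-1/2}e^{-x_0^2/2}$; by the weighted Nikol'skii inequality $\sup_t|v(t)|e^{-t^2/2}\le C(d)\|v\|_{L^2(\gamma_1)}$ for one-variable polynomials $v$ of degree $\le d$ (obtained by bounding $\|(v^2e^{-t^2/2})'\|_{L^1(\mathbb R)}$ via the $L^p$-equivalence for polynomials) and Cauchy--Schwarz in $\bar x$, this gives $\int|u|\,|\partial_{e_0}(r(\partial_{e_0}g_t))|\,d\gamma_n\le C(d)\eta^{-1/2}\|u\|_2$. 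For the last integral, $\eta\int|\psi'(g_t)||u|((\partial_{e_0}g_t)^2+\eta)^{-1}d\gamma_n\le\|\psi'\|_\infty\eta\int|u|((\partial_{e_0}g_t)^2+\eta)^{-1}d\gamma_n$; writing $|u|=|u|\mathbf 1_{\{|u|\le M\}}+|u|\mathbf 1_{\{|u|>M\}}$, the first part is $\le M\int((\partial_{e_0}g_t)^2+\eta)^{-1}d\gamma_n\le C(d)MG\,\eta^{-1+1/(2d-2)}$ by~(\ref{cw-ineq}) (computed as in~(\ref{CaW4.4}), using $\|\partial_{e_0}g_t\|_1^{-1/(d-1)}\le C(d)G$), and the second is $\le\eta^{-1}\|u\|_2\,\gamma_n(|u|>M)^{1/2}\le C(d)\eta^{-1}\|u\|_2\exp(-c(d)(M/\|u\|_2)^{2/d})$ by~\cite[Corollary~5.5.7]{GM}; choosing $M=C(d)\|u\|_2(|\log\eta|^{d/2}+1)$ makes the two comparable and yields $\le C(d)\|\psi'\|_\infty G\|u\|_2(|\log\eta|^{d/2}+1)\eta^{1/(2d-2)}$. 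Summing all contributions and minimizing in $\eta$ at $\eta\sim(G\|\psi'\|_\infty)^{-2(d-1)/d}$ gives the displayed estimate.

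Finally, with $\psi=\varphi_\delta$ we obtain $\|\mu_f-\mu_g\|_{\rm TV}\le C(d)\bigl(G\delta^{1/d}+\|u\|_2\,(G/\delta)^{1-1/d}(\log(2+G/\delta))^{d/2}\bigr)$; choosing $\delta\sim\|u\|_2G^{-1/d}$ balances the two summands, and since $\|\nabla g\|_*\ge c\|u\|_2$ in our range we have $\log G\le C(d)(1+|\log\|u\|_2|)$, so the logarithmic factor collapses to $C(d)(|\log\|f-g\|_2|^{d/2}+1)$; absorbing the $O(G\|u\|_2^{1/d})$ from the mean-zero reduction yields the asserted inequality (the summand $\sigma_g$ in the statement is not actually needed in this argument — it appears only if one replaces the fibre estimate for $\int|u||\partial_{e_0}(r(\partial_{e_0}g_t))|\,d\gamma_n$ by a cruder Hölder plus Carbery--Wright bound, which introduces $\|D^2g_t\|_{L^2(\gamma_n)}\le C(d)(\sigma_g+1)$; the infinite–dimensional case reduces to this one by conditioning on finitely many coordinates, as elsewhere). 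The one genuinely delicate point is precisely that fibre estimate: a direct Hölder/Carbery--Wright treatment of the second–order term only gives $\eta^{-1+1/(2d-2)}$ there, producing a final exponent strictly below $1/d$ for $d\ge3$, whereas the monotone–branch argument restores the sharp $\eta^{-1/2}$ and with it the exponent $1/d$.
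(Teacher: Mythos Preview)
Your argument is correct and takes a genuinely different route from the paper's proof. The paper works directly with a single test function $\varphi$, passes to its antiderivative $\Phi$, and exploits the identity $\partial_e(\Phi(f)-\Phi(g))=(\varphi(f)-\varphi(g))\partial_e g+\varphi(f)\partial_e(f-g)$ to set up an integration by parts against the regulariser $(\partial_e g)^2/((\partial_e g)^2+\varepsilon)$; the second--order term $\int|f-g|\,|\partial_e^2 g|\,((\partial_e g)^2+\varepsilon)^{-1}d\gamma$ is then handled by splitting $|f-g|$ at level $t\|f-g\|_2$, using H\"older on the large set (which brings in $\|\partial_e^2 g\|_p\lesssim\sigma_g$) and the fibre bound $\int|\partial_e^2 g|/((\partial_e g)^2+\varepsilon)\,d\gamma\le C(d)\varepsilon^{-1/2}$ on the small set, after which the Gaussian tail and the choices of $t,\varepsilon$ produce the $\|f-g\|_2^{1/d}$ and the logarithmic factor.

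Your proof instead (i) mollifies $\varphi$ at scale $\delta$ and invokes Theorem~\ref{t-new} as a black box to control the mollification error by $G\delta^{1/d}$, (ii) linearly interpolates $g_t=(1-t)g+tf$ for the smooth middle term, and (iii) most notably, in the analogue of the second--order term $\int|u|\,|\partial_{e_0}(r(\partial_{e_0}g_t))|\,d\gamma_n$ you keep the Gaussian weight with $|u|$, bound $\sup_{x_0}|u(x_0,\bar x)|e^{-x_0^2/2}\le C(d)\|u(\cdot,\bar x)\|_{L^2(\gamma_1)}$ by the one--variable norm equivalence, and separately integrate the total variation of $r\circ(\partial_{e_0}g_t)$ on the fibre. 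This sidesteps H\"older on $|\partial_e^2 g|$ altogether and indeed eliminates the $\sigma_g$ summand from the constant, as you note. The paper's proof is more self--contained (it does not feed back through Theorem~\ref{t-new}), while yours is more modular and delivers the slightly sharper constant; both place the logarithm in the same spot, namely in the truncation of $|u|$ needed for the Carbery--Wright estimate on the $\eta$--remainder. Two small remarks: the factor $\eta^{-1}$ you write in the bound for the tail part of the $\eta$--term is spurious (since $\eta/((\partial_{e_0}g_t)^2+\eta)\le1$ already), though this only weakens an estimate that still suffices; and the mean--zero reduction, while harmless, is not actually used downstream.
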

\begin{proof}
If $\|f-g\|_2\ge1/e$, then
$$
\|\gamma\circ f^{-1} - \gamma\circ g^{-1}\|_{\rm TV}\le1\le
e^{1/d}\|f-g\|^{1/d}_2\Bigl(\bigl|\ln\|f-g\|_2\bigr|^{d/2}+1\Bigr).
$$
Hence we can assume that $\|f-g\|_2\le1/e$.
Fix a function $\varphi\in C_0^\infty(\mathbb{R})$ with $\|\varphi\|_\infty\le1$,
a vector $e\in\mathbb{R}^n$ of unit length, and a number $\varepsilon\in (0, 1/e)$.
Consider the function
$$
\Phi(t) = \int_{-\infty}^t\varphi(\tau)d\tau
$$
Note that
$$
\partial_e(\Phi(f) - \Phi(g)) = \partial_ef\varphi(f)-\partial_eg\varphi(g)=
(\varphi(f)-\varphi(g))\partial_eg+\varphi(f)(\partial_ef-\partial_eg).
$$
Thus, we have (omitting indication of limits of integration in case of~$\mathbb{R}^n$)
\begin{multline*}
\int\varphi(f)-\varphi(g)d\gamma=
\int(\varphi(f)-\varphi(g))\frac{(\partial_eg)^2}{(\partial_eg)^2+\varepsilon}d\gamma
\\
+\varepsilon\int(\varphi(f)-\varphi(g))((\partial_eg)^2+\varepsilon)^{-1}d\gamma
\\
=\int\frac{\partial_eg\partial_e(\Phi(f) - \Phi(g))}{(\partial_eg)^2+\varepsilon}d\gamma -
\int\frac{\varphi(f)(\partial_ef-\partial_eg)\partial_eg}{(\partial_eg)^2+\varepsilon}d\gamma
\\
+\varepsilon\int(\varphi(f)-\varphi(g))((\partial_eg)^2+\varepsilon)^{-1}d\gamma.
\end{multline*}
Let us estimate each term separately.
First, let us consider the last term.
Using the Carbery--Wright inequality in the same manner
as in derivation of (\ref{CaW4.4}) we obtain
\begin{multline*}
\varepsilon\int(\varphi(f)-\varphi(g))((\partial_eg)^2+\varepsilon)^{-1}d\gamma\le
2\varepsilon\int((\partial_eg)^2+\varepsilon)^{-1}d\gamma
\\
\le 2dc_1 \|\partial_eg\|_2^{-1/(d-1)}\varepsilon^{1/(2d-2)}
\int_0^\infty(s+1)^{-2}s^{1/(2d-2)}ds
\\
=
c_1(d)\|\partial_eg\|_2^{-1/(d-1)}\varepsilon^{1/(2d-2)}.
\end{multline*}
Now we estimate the second term:
\begin{multline*}
-\int\frac{\varphi(f)(\partial_ef-\partial_eg)\partial_eg}{(\partial_eg)^2+\varepsilon}d\gamma\le
\int\frac{|\partial_ef-\partial_eg|\, |\partial_eg|}{(\partial_eg)^2+\varepsilon}d\gamma
\\
\le2^{-1}\varepsilon^{-1/2}\int|\partial_ef-\partial_eg|d\gamma \le c_2(d)\varepsilon^{-1/2}\|f-g\|_2.
\end{multline*}
Finally, let us estimate the first term. Integrating by parts we obtain
\begin{multline*}
\int\frac{\partial_eg\partial_e(\Phi(f) - \Phi(g))}{(\partial_eg)^2+\varepsilon}d\gamma
\\
=-\int(\Phi(f) - \Phi(g))\Bigl[\frac{\partial^2_eg-\langle x, e\rangle\partial_eg}{(\partial_eg)^2+\varepsilon}-
2\frac{(\partial_eg)^2\partial^2_eg}{((\partial_eg)^2+\varepsilon)^2}\Bigr]d\gamma
\\
\le3\int|f-g|\frac{|\partial^2_eg|}{(\partial_eg)^2+\varepsilon}d\gamma
+2^{-1}\varepsilon^{-1/2}\int|f-g|\, |\langle x, e\rangle|d\gamma
\\
\le3\int_{\{|f-g|\ge \|f-g\|_2t\}}|f-g|\frac{|\partial^2_eg|}{(\partial_eg)^2+\varepsilon}d\gamma+
3t\|f-g\|_2\int\frac{|\partial^2_eg|}{(\partial_eg)^2+\varepsilon}d\gamma
\\
+2^{-1}\varepsilon^{-1/2}\|f-g\|_2\le
c_3(d)\varepsilon^{-1}\|f-g\|_2\sigma_g\bigl(\gamma(|f-g|\ge \|f-g\|_2t)\bigr)^{1/3}
\\
+3t\|f-g\|_2\int\frac{|\partial^2_eg|}{(\partial_eg)^2+\varepsilon}d\gamma
+2^{-1}\varepsilon^{-1/2}\|f-g\|_2.
\end{multline*}
Note that writing $\gamma$ as the product of $\gamma_1$ and $\gamma_{n-1}$,
we have
\begin{multline*}
\int\frac{|\partial^2_eg|}{(\partial_eg)^2+\varepsilon} d\gamma =
\int_{\langle e\rangle^\bot}\int_{\langle e \rangle}
\frac{|\partial^2_eg|}{(\partial_eg)^2+\varepsilon} d\gamma_1\, d\gamma_{n-1}
\\
=\varepsilon^{-1/2}\int_{\langle e\rangle^\bot}\int_{\langle e \rangle}
\frac{|\partial^2_eg\varepsilon^{-1/2}|}{(\partial_eg\varepsilon^{-1/2})^2+1} d\gamma_1\, d\gamma_{n-1}
\le d (2\pi \varepsilon)^{-1/2}\int_{\langle e\rangle^\bot}\int\frac{d\tau}{\tau^2+1}\, d\gamma_{n-1}
\\
=d\varepsilon^{-1/2}(2\pi)^{-1/2}\int\frac{1}{\tau^2+1}d\tau = 3^{-1}c_4(d)\varepsilon^{-1/2}.
\end{multline*}
Recall (see \cite[Corollary~5.5.7]{GM}) that
$$
\gamma\bigl(x\colon \ |f(x)|\ge t \|f\|_2\bigr)\le c_r \exp (- r t^{2/d}), \quad
r<\frac{d}{2e},
$$
where $c_r$ depends only on~$r$.
Thus, for $t\ge 1$ and some $c\in (0,1/2)$ we obtain
\begin{multline*}
\int [\varphi(f)-\varphi(g)] d\gamma
\le
c_5(d)\Bigl(\|\partial_eg\|_2^{-1/(d-1)}\varepsilon^{1/(2d-2)}
\\
 +
\varepsilon^{-1}\|f-g\|_2\sigma_g
\exp\bigl(-ct^{2/d}\bigr) + t\|f-g\|_2\varepsilon^{-1/2}\Bigr).
\end{multline*}
Setting $t = (2c)^{-d/2}(\ln\varepsilon^{-1})^{d/2}$,
$\varepsilon = \|f-g\|_{2}^{2(d-1)/d}$
(recall that $\|f-g\|_{2}<1/e$, hence $t\ge 1$),
we obtain that the right-hand side is estimated by
\begin{multline*}
c(d)\Bigl(\|\partial_eg\|_2^{-1/(d-1)}\|f-g\|_2^{1/d} +
\sigma_g\|f-g\|_2^{1/d} + \bigl|\ln\|f-g\|_2\bigr|^{d/2}\|f-g\|_2^{1/d}\Bigr)
\\
\le c(d)\bigl(\|\partial_eg\|_2^{-1/(d-1)} +
\sigma_g + 1\bigr)\bigl|\ln\|f-g\|_2\bigr|^{d/2}\|f-g\|_2^{1/d}.
\end{multline*}
Now taking inf over $e$ and sup over $\varphi$ we obtain the desired estimate.
\end{proof}

Our next theorem is a multidimensional analog of Theorem A.
We need a lemma.

\begin{lemma}\label{l1}
Let $A$ and $B$ be a pair of square $k\times k$-matrices. Then
$$
|\det A - \det B| \le\|A-B\|_{HS}\bigl(\|A\|_{HS}^2 + \|B\|_{HS}^2\bigr)^{(k-1)/2}.
$$
\end{lemma}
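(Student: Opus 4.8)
The plan is to reduce the determinant difference to a telescoping sum over columns and then estimate via Hadamard's inequality and the Cauchy--Schwarz inequality. Write $A=(a_1,\dots,a_k)$ and $B=(b_1,\dots,b_k)$ in terms of their columns, and let $|a_i|,|b_i|$ denote Euclidean norms, so that $\|A-B\|_{HS}^2=\sum_i|a_i-b_i|^2$ and $\|A\|_{HS}^2+\|B\|_{HS}^2=\sum_j(|a_j|^2+|b_j|^2)$. First I would introduce the chain of matrices $C_i=(b_1,\dots,b_i,a_{i+1},\dots,a_k)$ for $i=0,\dots,k$, so that $C_0=A$ and $C_k=B$, and write
$$
\det A-\det B=\sum_{i=1}^k(\det C_{i-1}-\det C_i).
$$
Since $C_{i-1}$ and $C_i$ differ only in the $i$-th column, multilinearity of the determinant gives $\det C_{i-1}-\det C_i=\det(b_1,\dots,b_{i-1},\,a_i-b_i,\,a_{i+1},\dots,a_k)$.

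Next I would apply Hadamard's inequality to each of these $k$ determinants, obtaining
$$
|\det A-\det B|\le\sum_{i=1}^k|a_i-b_i|\prod_{j<i}|b_j|\prod_{j>i}|a_j|.
$$
The decisive step for obtaining the symmetric bound of the lemma with constant $1$ is to estimate each column norm by $|a_j|,|b_j|\le(|a_j|^2+|b_j|^2)^{1/2}=:s_j$, which replaces $\prod_{j<i}|b_j|\prod_{j>i}|a_j|$ by $\prod_{j\ne i}s_j$. Then Cauchy--Schwarz in the index $i$ gives
$$
|\det A-\det B|\le\Bigl(\sum_{i=1}^k|a_i-b_i|^2\Bigr)^{1/2}\Bigl(\sum_{i=1}^k\prod_{j\ne i}s_j^2\Bigr)^{1/2}=\|A-B\|_{HS}\,\Bigl(\sum_{i=1}^k\prod_{j\ne i}s_j^2\Bigr)^{1/2}.
$$

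It then suffices to check the elementary bound $\sum_{i=1}^k\prod_{j\ne i}s_j^2\le\bigl(\sum_{j=1}^k s_j^2\bigr)^{k-1}$. This follows from the multinomial expansion of $\bigl(\sum_j s_j^2\bigr)^{k-1}$: for each $i$ the monomial $\prod_{j\ne i}s_j^2$ occurs in that expansion with coefficient $(k-1)!\ge1$, and all terms of the expansion are nonnegative. Since $\sum_j s_j^2=\|A\|_{HS}^2+\|B\|_{HS}^2$, combining this with the previous display yields the assertion. The only point requiring care is keeping the constant equal to $1$: bounding the column norms by a maximum, or estimating $\sum_i\prod_{j\ne i}s_j^2$ term by term, costs a spurious factor of order $k$, so one genuinely needs the choice $s_j=(|a_j|^2+|b_j|^2)^{1/2}$ together with the multinomial observation above; the rest is routine, and the case $k=1$ is immediate since both sides equal $|a_{11}-b_{11}|$.
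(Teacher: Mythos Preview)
Your proof is correct and follows essentially the same route as the paper: a column-by-column telescoping of the determinant, multilinearity, Hadamard's inequality, and then Cauchy--Schwarz. The paper simply writes the final step $\sum_i |b_1|\cdots|b_{i-1}||a_i-b_i||a_{i+1}|\cdots|a_k|\le\|A-B\|_{HS}(\|A\|_{HS}^2+\|B\|_{HS}^2)^{(k-1)/2}$ in one line, whereas you have spelled out the justification via $s_j=(|a_j|^2+|b_j|^2)^{1/2}$ and the multinomial bound $\sum_i\prod_{j\ne i}s_j^2\le(\sum_j s_j^2)^{k-1}$.
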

\begin{proof}
Let $a_i$ and $b_i$, $i=1,\ldots,k$,  be the columns of the
matrices $A$ and $B$, respectively. The determinant of the matrix $A$
is a multilinear function in $a_1,\ldots, a_k$. We denote this
function by $\Delta(a_1,\ldots, a_k)$. We have
\begin{multline*}
|\det A - \det B| = |\Delta(a_1,\ldots, a_k) - \Delta(b_1,\ldots, b_k)|
\\
\le
\sum_{i=1}^k|\Delta(b_1,\ldots, b_{i-1}, a_i,\ldots, a_k) - \Delta(b_1,\ldots, b_i, a_{i+1},\ldots a_k)|
\\
=\sum_{i=1}^k|\Delta(b_1,\ldots, b_{i-1}, a_i - b_i, a_{i+1}, \ldots, a_k)|\le
\sum_{i=1}^k |b_1|\ldots|b_{i-1}||a_i - b_i||a_{i+1}|\ldots|a_k|
\\
\le
\Bigl(\sum_{i=1}^k|a_i - b_i|^2\Bigr)^{1/2}
\Bigl(\sum_{i=1}^k (|a_i|^2 + |b_i|^2)\Bigr)^{(k-1)/2}
\\
=
\|A-B\|_{HS}(\|A\|_{HS}^2 + \|B\|_{HS}^2)^{(k-1)/2}.
\end{multline*}
The lemma is proved.
\end{proof}

\begin{theorem}\label{t6.3}
Let $k,d\in\mathbb{N}$, $a>0$, $b> 0$, $\tau>0$. Then there exists a number  $C(d, k, a, b, \tau)>0$
such that, for every pair of mappings $f = (f_1, \ldots, f_k)$ and $g~=~(g_1, \ldots, g_k)\colon\,
\mathbb{R}^n\to\mathbb{R}^k$, where all $f_i, g_i$ are polynomials of degree~$d$ and
$$
\int_{\mathbb{R}^n} \Delta_f \, d\gamma\ge a,
\quad \max_{i\le k} \sigma_{f_i}\le b,
$$
one has
\begin{equation}\label{est1}
\|\gamma\circ f^{-1} - \gamma\circ g^{-1}\|_{\rm TV}
\le C(d, k, a, b, \tau)\|f-g\|_2^{\theta}, \ \theta=\frac{1}{4k(d-1)+\tau}.
\end{equation}
\end{theorem}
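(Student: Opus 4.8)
The plan is to bound $\int_{\mathbb{R}^n}[\varphi(f)-\varphi(g)]\,d\gamma$ for an arbitrary $\varphi\in C_b^\infty(\mathbb{R}^k)$ with $\|\varphi\|_\infty\le1$ and then pass to the supremum over such $\varphi$. One first reduces to the case $\|f-g\|_2<\delta_0$ for a suitable $\delta_0=\delta_0(d,k,a,b)\le1$ (chosen just below), the complementary case being trivial since $\|\gamma\circ f^{-1}-\gamma\circ g^{-1}\|_{\rm TV}\le2\le 2\delta_0^{-\theta}\|f-g\|_2^{\theta}$. Set $F_t:=(1-t)g+tf$ for $t\in[0,1]$; these are polynomial mappings of degree $d$, and $\|F_t-f\|_2=(1-t)\|f-g\|_2\le\|f-g\|_2$, so by the reverse Poincaré inequality~(\ref{rv-Poin}) and the equivalence of $L^p$-norms of polynomials of degree~$d$, the entries of $M_{F_t}-M_f$ have $L^p(\gamma)$-norms $\lesssim\|f-g\|_2$ (with constants depending on $d,k,b$). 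Lemma~\ref{l1} then gives $\int|\Delta_{F_t}-\Delta_f|\,d\gamma\le C(d,k,b)\|f-g\|_2$, hence $\int\Delta_{F_t}\,d\gamma\ge a/2$ for all $t$ once $\delta_0$ is small enough; similarly $\sigma_{F_{t,i}}\le b+2$. This is exactly what allows us to run, uniformly along the segment, the estimates of the proof of Theorem~\ref{th4.1}.

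Next, writing $\varphi(f)-\varphi(g)=\int_0^1\langle\nabla\varphi(F_t),f-g\rangle\,dt$ and fixing a parameter $\varepsilon\in(0,1)$, split each inner integral as
$$
\int\langle\nabla\varphi(F_t),f-g\rangle\,d\gamma
=\int\langle\nabla\varphi(F_t),f-g\rangle\frac{\Delta_{F_t}}{\Delta_{F_t}+\varepsilon}\,d\gamma
+\varepsilon\int\frac{\langle\nabla\varphi(F_t),f-g\rangle}{\Delta_{F_t}+\varepsilon}\,d\gamma .
$$
The first (main) term I would treat exactly as in Theorem~\ref{th4.1}: from $M_{F_t}\bigl(\partial_j\varphi(F_t)\bigr)_j=\bigl(\langle\nabla(\varphi\circ F_t),\nabla F_{t,i}\rangle\bigr)_i$ and~(\ref{inver}) one gets $\Delta_{F_t}\langle\nabla\varphi(F_t),f-g\rangle=\bigl\langle\bigl(\langle\nabla(\varphi\circ F_t),\nabla F_{t,i}\rangle\bigr)_i,\,A_{F_t}(f-g)\bigr\rangle$; integration by parts against the Ornstein--Uhlenbeck operator moves the gradient off $\varphi$ and produces three terms, each dominated by $\|\varphi\circ F_t\|_\infty\le1$ times an integral of $(\Delta_{F_t}+\varepsilon)^{-1}$ or $(\Delta_{F_t}+\varepsilon)^{-2}$ against polynomials carrying the factor $A_{F_t}(f-g)$ (linear in $f-g$) or $\nabla\bigl(A_{F_t}(f-g)\bigr)$ (linear in $f-g$ and $\nabla(f-g)$, hence small in $L^p(\gamma)$ by~(\ref{rv-Poin})). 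Hölder's inequality, the Carbery--Wright bound~(\ref{CaW4.4}) with $\beta=1/(2k(d-1))$ and $\int\Delta_{F_t}\,d\gamma\ge a/2$, and the norm equivalences give the uniform-in-$t$ estimate $C(d,k,a,b,q)\,\varepsilon^{-2+\beta/q}\|f-g\|_2$, the worst power coming from the term with $(\Delta_{F_t}+\varepsilon)^{-2}$.

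The crucial point is the remaining (error) term $\varepsilon\int_0^1\!\!\int\langle\nabla\varphi(F_t),f-g\rangle(\Delta_{F_t}+\varepsilon)^{-1}\,d\gamma\,dt$, in which $\nabla\varphi$ cannot be controlled by $\|\varphi\|_\infty$. Here one uses $\langle\nabla\varphi(F_t),f-g\rangle=\frac{d}{dt}\varphi(F_t)$ together with the smoothness of $t\mapsto\Delta_{F_t}$, and integrates by parts in the homotopy parameter:
$$
\varepsilon\int\int_0^1\frac{\frac{d}{dt}\varphi(F_t)}{\Delta_{F_t}+\varepsilon}\,dt\,d\gamma
=\varepsilon\int\Bigl[\frac{\varphi(f)}{\Delta_f+\varepsilon}-\frac{\varphi(g)}{\Delta_g+\varepsilon}\Bigr]d\gamma
+\varepsilon\int\int_0^1\varphi(F_t)\frac{\frac{d}{dt}\Delta_{F_t}}{(\Delta_{F_t}+\varepsilon)^2}\,dt\,d\gamma .
$$
Since $|\varphi|\le1$, the boundary terms are $\le C(d,a)\,\varepsilon^{\beta}$ by~(\ref{CaW4.4}) --- with \emph{no} factor $\|f-g\|_2$ --- while $\frac{d}{dt}\Delta_{F_t}$ is linear in $\nabla(f-g)$, hence small in $L^p(\gamma)$, so the last integral is $\le C(d,k,a,b,q)\,\varepsilon^{-1+\beta/q}\|f-g\|_2$. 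Collecting everything yields
$$
\int_{\mathbb{R}^n}[\varphi(f)-\varphi(g)]\,d\gamma\le C(d,k,a,b,q)\bigl(\varepsilon^{-2+\beta/q}\|f-g\|_2+\varepsilon^{\beta}\bigr),
$$
and the choice $\varepsilon=\|f-g\|_2^{1/(2+\beta-\beta/q)}\in(0,1)$ balances the two terms and gives the power $\beta/(2+\beta-\beta/q)=1/(4k(d-1)+\tau)$ with $\tau=(q-1)/q$, which can be made arbitrarily small by taking $q>1$ close to $1$. Taking the supremum over $\varphi$ and combining with the trivial case finishes the proof. The main obstacle is precisely this error term: bounding it naively brings in $\|\nabla\varphi\|_\infty$, which effectively forces one to go through $d_{\rm K}$ and costs an extra additive $1$ in the exponent (as in Theorem~\ref{t4.3}), and it is the integration by parts in the homotopy parameter that removes this loss; a secondary technical point, handled by Lemma~\ref{l1}, is keeping $\int\Delta_{F_t}\,d\gamma$ bounded away from zero along the whole segment.
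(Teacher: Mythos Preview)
Your proof is correct and reaches exactly the same exponent $\theta=1/(4k(d-1)+\tau)$, but it proceeds along a genuinely different path from the paper's.

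The paper uses a \emph{discrete} telescoping path $f=f^0\to f^1\to\cdots\to f^k=g$ that switches one coordinate at a time, introduces the coordinate antiderivatives $\Phi_i(y)=\int_{-\infty}^{y_i}\varphi(\ldots,t,\ldots)\,dt$, and exploits the identity $\partial_{y_i}\Phi_i=\varphi$ together with $|\Phi_i(f^{i-1})-\Phi_i(f^i)|\le|f_i-g_i|$ to get the factor $\|f-g\|_2$ after one spatial integration by parts. Crucially, the paper keeps the fixed denominator $\Delta_f+\varepsilon$ throughout and absorbs the mismatch between $\Delta_{f^{i-1}}$ and $\Delta_f$ via Lemma~\ref{l1}; this avoids ever needing a lower bound on $\int\Delta_{f^i}\,d\gamma$ or on $\int\Delta_g\,d\gamma$.

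Your approach uses the \emph{continuous} linear homotopy $F_t=(1-t)g+tf$ and the fundamental theorem of calculus in~$t$. The decisive idea---integrating the error term by parts in the homotopy parameter so that $\langle\nabla\varphi(F_t),f-g\rangle=\tfrac{d}{dt}\varphi(F_t)$ is replaced by $\varphi(F_t)\,\tfrac{d}{dt}\Delta_{F_t}/(\Delta_{F_t}+\varepsilon)^2$ plus boundary terms---is an elegant substitute for the paper's antiderivative trick and yields the same $\varepsilon^\beta$ contribution without invoking $\|\nabla\varphi\|_\infty$. The price you pay is that your denominators involve $\Delta_{F_t}$ (and $\Delta_g$ at the boundary), so you need the preliminary reduction to $\|f-g\|_2<\delta_0$ via Lemma~\ref{l1} to secure $\int\Delta_{F_t}\,d\gamma\ge a/2$ uniformly in $t$; the paper's version needs no such reduction for the lower bound on $\Delta$ (its reduction to $\|f-g\|_2\le1$ is only used to compare powers of $\varepsilon$). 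Both methods lead to the same balance $\varepsilon^{-2+\beta/q}\|f-g\|_2$ versus $\varepsilon^\beta$ and hence the same final exponent.
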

\begin{proof}
Fix $\varphi\in C_0^\infty(\mathbb{R}^k)$ with $\|\varphi\|_\infty\le1$.
Let $f^i = (g_1, \ldots, g_i, f_{i+1},\ldots, f_k)$, $f^0 = f$, $f^k = g$.
Consider the function
$$
\Phi_i(y_1,\ldots, y_k) =
\int_{-\infty}^{y_i} \varphi(y_1,\ldots, y_{i-1}, t, y_{i+1},\ldots, y_k)dt.
$$
Note that for each $i$ we have
$$
\nabla (\Phi_i(f^{i-1})) - \nabla (\Phi_i(f^i) ) =
\sum_{j=1}^k
\bigl(\partial_{y_j}\Phi_i(f^{i-1})\nabla f^{i-1}_j -
\partial_{y_j}\Phi_i(f^i)\nabla f^i_j\bigr),
$$
which can be written as
\begin{multline*}
\sum_{j=1}^k \bigl(\partial_{y_j}\Phi_i(f^{i-1})
- \partial_{y_j}\Phi_i(f^i)\bigr)\nabla f^{i-1}_j
+ \partial_{y_i}\Phi_i(f^i)(\nabla f^{i-1}_i - \nabla f^i_i)
\\
=\sum_{j=1}^k \bigl(\partial_{y_j}\Phi_i(f^{i-1})
- \partial_{y_j}\Phi_i(f^i)\bigr)\nabla f^{i-1}_j + \varphi(f^i)(\nabla f_i - \nabla g_i).
\end{multline*}
Thus,
\begin{multline*}
\bigl(\langle\nabla\Phi_i(f^{i-1}) - \nabla\Phi_i(f^i), \nabla f^{i-1}_m\rangle\bigr)_{m=1}^k
 \\
=M_{f^{i-1}}
\bigl(\partial_{y_j}\Phi_i(f^{i-1}) - \partial_{y_j}\Phi_i(f^i)\bigr)_{j=1}^k
+  \varphi(f^i)\bigl(\langle\nabla f_i - \nabla g_i, \nabla f^{i-1}_m\rangle\bigr)_{m=1}^k.
\end{multline*}
Recall that
$\Delta_f\cdot M_f^{-1} = A_f$ (see (\ref{inver})).
Hence,
denoting the elements of the matrix $A_f^k$ by $a^{s,r}_{f^k}$,
 we obtain
\begin{multline}\label{f1}
\Delta_{f^{i-1}}(\varphi(f^{i-1})
- \varphi(f^i)) = \Delta_{f^{i-1}}(\partial_{y_i}\Phi_i(f^{i-1}) - \partial_{y_i}\Phi_i(f^i))
\\
=\sum_{j=1}^k \langle\nabla\Phi_i(f^{i-1}) - \nabla\Phi_i(f^i), \nabla f^{i-1}_j\rangle a^{j,i}_{f^{i-1}} -
\varphi(f^i)\sum_{j=1}^k \langle\nabla f_i - \nabla g_i, \nabla f^{i-1}_j\rangle a^{j,i}_{f^{i-1}}.
\end{multline}
Next we observe that
\begin{multline}\label{f2}
\int [\varphi(f) - \varphi(g)] d\gamma
\\
= \sum_{i=1}^{k}\int \frac{\Delta_{f^{i-1}}(\varphi(f^{i-1})
- \varphi(f^i))}{\Delta_f + \varepsilon}d\gamma
 +
\sum_{i=1}^{k}\int \frac{(\Delta_{f^{i-1}} - \Delta_{f^i})\varphi(f^i)}{\Delta_f + \varepsilon}d\gamma
\\
+\int\frac{(\Delta_g - \Delta_f)\varphi(g)}
{\Delta_f + \varepsilon}d\gamma + \int\varepsilon(\varphi(f) - \varphi(g))(\Delta_f + \varepsilon)^{-1}d\gamma.
\end{multline}
Let us estimate each term separately.
Let
$$
\beta=(2k(d-1))^{-1}.
$$
Recall (see (\ref{CaW4.4})) that
$$
\int (\Delta_f+\varepsilon)^{-p}\, d\gamma \le
c(p,d)^p\varepsilon^{-p+\beta}\biggl(\int \Delta_f\, d\gamma\biggr)^{-\beta}.
$$
Using this inequality, we estimate the last term in the right-hand side of (\ref{f2}):
$$
\int\varepsilon(\varphi(f) - \varphi(g))
(\Delta_f + \varepsilon)^{-1}d\gamma \le 2c(1,d)\varepsilon^\beta
\biggl(\int \Delta_f\, d\gamma\biggr)^{-\beta}.
$$
The second and the third term in (\ref{f2}) can be estimated as follows.
By Lemma \ref{l1} we have
\begin{multline*}
|\Delta_g - \Delta_f|\le \Bigl(\|M_f\|_{HS}^2 + \|M_g\|_{HS}^2\Bigr)^{(k-1)/2}\|M_f - M_g\|_{HS}
\\
\le
\sqrt{2}\Bigl(\sum_{i=1}^k \bigl(|\nabla f_i|^2 + |\nabla g_i|^2\bigr)\Bigr)^{k-1/2}
\Bigl(\sum_{i=1}^k |\nabla f_i-\nabla g_i|^2\Bigr)^{1/2},
\end{multline*}
where we used the estimates
$\|M_f\|_{HS}^2=\sum_{i,j} \langle\nabla f_i, \nabla f_j\rangle^2 \le
\Bigl(\sum_{i} |\nabla f_i|^2\Bigr)^2$
and
\begin{multline*}
\|M_f - M_g\|_{HS}^2 = \sum_{i,j}
\bigl(\langle\nabla f_i, \nabla f_j\rangle - \langle\nabla g_i, \nabla g_j\rangle\bigr)^2
\\
 \le
2 \sum_{i,j} \bigl[ \bigl(\langle\nabla f_i, \nabla f_j\rangle
- \langle\nabla f_i, \nabla g_j\rangle\bigr)^2
+ \bigl(\langle\nabla f_i, \nabla g_j\rangle
 - \langle\nabla g_i, \nabla g_j\rangle\bigr)^2\bigr]
\\
\le
2 \sum_{i,j} \bigl[ |\nabla f_i|^2 |\nabla f_j-\nabla g_j|^2
+ |\nabla f_i  - \nabla g_i|^2 |\nabla g_j|^2\bigr]
\\
=
2 \sum_{i}
|\nabla f_i - \nabla g_i|^2 \sum_{i} \bigl(|\nabla f_i|^2 + |\nabla g_i|^2\bigr).
\end{multline*}
Similarly,
$$
|\Delta_{f^{i-1}} - \Delta_{f^i}|\le 2^k
\Bigl(\sum_{i=1}^k
(|\nabla f_i|^2 + |\nabla g_i|^2)\Bigr)^{k-1/2}
\Bigl(\sum_{i=1}^k |\nabla f_i-\nabla g_i|^2\Bigr)^{1/2}.
$$
Using these estimates we obtain
\begin{multline*}
\int \frac{(\Delta_{f^{i-1}} - \Delta_{f^i})\varphi(f^i)}{\Delta_f + \varepsilon}d\gamma \le
\int \frac{|\Delta_{f^{i-1}} - \Delta_{f^i}|}{\Delta_f + \varepsilon}d\gamma
\\
\le
2^k\int \Bigl(\sum_{i=1}^k (|\nabla f_i|^2
+ |\nabla g_i|^2)\Bigr)^{k-1/2}
\Bigl(\sum_{i=1}^k |\nabla f_i-\nabla g_i|^2\Bigr)^{1/2}
(\Delta_f + \varepsilon)^{-1}d\gamma
\\
\le
C(k, d) \Bigl(\sum_{i=1}^k (\sigma^2_{f_i} + \sigma_{g_i}^2)\Bigr)^{k-1/2}
\|f-g\|_2\varepsilon^{-1}.
\end{multline*}
Similarly,
$$
\int \frac{(\Delta_{g} - \Delta_{f})\varphi(g)}{\Delta_f + \varepsilon}d\gamma\le
C(k, d) \Bigl(\sum_{i=1}^k
(\sigma^2_{f_i} + \sigma_{g_i}^2)\Bigr)^{k-1/2}\|f-g\|_2\varepsilon^{-1}.
$$
Let us now consider the first term in the right-hand side of (\ref{f2}). By (\ref{f1}) we have
\begin{multline}\label{f3}
\int \frac{\Delta_{f^{i-1}}(\varphi(f^{i-1}) - \varphi(f^i))}{\Delta_f + \varepsilon}d\gamma
\\
=\int (\Delta_f + \varepsilon)^{-1}\sum_{j=1}^k \langle\nabla\Phi_i(f^{i-1})
- \nabla\Phi_i(f^i), \nabla f^{i-1}_j\rangle a^{j,i}_{f^{i-1}}d\gamma
\\
-\int\varphi(f^i)(\Delta_f + \varepsilon)^{-1}
\sum_{j=1}^k \langle\nabla f_i-\nabla g_i, \nabla f^{i-1}_j\rangle a^{j,i}_{f^{i-1}}d\gamma.
\end{multline}
The second term in (\ref{f3}) can be estimated in the following way:
\begin{multline*}
\int\varphi(f^i)(\Delta_f + \varepsilon)^{-1}\langle\nabla f_i-\nabla g_i,
\nabla f^{i-1}_j\rangle a^{j,i}_{f^{i-1}}d\gamma
\\
\le
\int(\Delta_f + \varepsilon)^{-1}|\nabla f_i-\nabla g_i| \,
|\nabla f^{i-1}_j|\, |a^{j,i}_{f^{i-1}}|d\gamma
\\
\le
\varepsilon^{-1}(k-1)^2!\int\Bigl(\sum_{i=1}^k (|\nabla f_i|^2 + |\nabla g_i|^2)
\Bigr)^{k-1/2}
\Bigl(\sum_{i=1}^k |\nabla f_i-\nabla g_i|^2\Bigr)^{1/2}d\gamma
\\
\le C(k, d)\varepsilon^{-1} \Bigl(\sum_{i=1}^k (\sigma^2_{f_i} + \sigma_{g_i}^2)
\Bigr)^{k-1/2}\|f-g\|_2.
\end{multline*}
Finally, let us consider
the first term in (\ref{f3}). Fix $p>1$. Integrating by parts we have
\begin{multline*}
\int (\Delta_f + \varepsilon)^{-1}\langle\nabla\Phi_i(f^{i-1})
- \nabla\Phi_i(f^i), \nabla f^{i-1}_j\rangle a^{j,i}_{f^{i-1}}d\gamma
= - \int (\Phi_i(f^{i-1}) - \Phi_i(f^i))
\\
\times \Bigl(\frac{a^{j,i}_{f^{i-1}}
L f^{i-1}_j}{\Delta_f + \varepsilon} - \frac{a^{j,i}_{f^{i-1}}
\langle\nabla f^{i-1}_j, \nabla \Delta_f \rangle}
{(\Delta_f + \varepsilon)^{2}}
+ \frac{\langle\nabla f^{i-1}_j, \nabla a^{j,i}_{f^{i-1}}\rangle}{\Delta_f
+ \varepsilon}\Bigr)d\gamma
\\
\le \int |f_i - g_i|\Bigl(\frac{|a^{j,i}_{f^{i-1}}L f^{i-1}_j|}
{\Delta_f + \varepsilon} + \frac{|a^{j,i}_{f^{i-1}} \langle\nabla f^{i-1}_j, \nabla \Delta_f \rangle|}
{(\Delta_f + \varepsilon)^{2}}
+ \frac{|\langle\nabla f^{i-1}_j, \nabla a^{j,i}_{f^{i-1}}\rangle|}
{\Delta_f + \varepsilon}\Bigr)d\gamma,
\end{multline*}
which is estimated by
\begin{multline*}
\varepsilon^{-1}\|f-g\|_2\Bigl(\|a^{j,i}_{f^{i-1}}L f^{i-1}_j\|_2
+ \|\langle\nabla f^{i-1}_j, \nabla a^{j,i}_{f^{i-1}}\rangle\|_2\Bigr)
\\
+C(p, k ,d)\|f-g\|_2\|a^{j,i}_{f^{i-1}} \langle\nabla f^{i-1}_j,
\nabla \Delta_f \rangle\|_2\|(\Delta_f + \varepsilon)^{-2}\|_p
\\
\le C(k,d)\varepsilon^{-1} \Bigl
(\sum_{i=1}^k (\sigma^2_{f_i} + \sigma_{g_i}^2)\Bigr)^{k-1/2}\|f-g\|_2
\\
+C_1(p,k,d)\|f-g\|_2\Bigl(\sum_{i=1}^k (\sigma^2_{f_i}
+ \sigma_{g_i}^2)\Bigr)^{2k-1/2}\varepsilon^{-2+\beta/p}
\biggl(\int \Delta_f\, d\gamma\biggr)^{-\beta/p}.
\end{multline*}
Now the left-hand side of (\ref{f2}) can be estimated by
\begin{multline*}
 C_2(p,k,d)\biggl(\varepsilon^\beta
\biggl(\int \Delta_f\, d\gamma\biggr)^{-\beta}
+\Bigl(\sum_{i=1}^k (\sigma^2_{f_i} + \sigma_{g_i}^2)\Bigr)^{k-1/2}
\|f-g\|_2\varepsilon^{-1}
 \\
+\|f-g\|_2\Bigl(\sum_{i=1}^k (\sigma^2_{f_i} + \sigma_{g_i}^2)
\Bigr)^{2k-1/2}\varepsilon^{-2+\beta/p}
\biggl(\int \Delta_f\, d\gamma\biggr)^{-\beta/p}\biggr).
\end{multline*}
If $\|f-g\|_2\ge1$, the desired estimate (\ref{est1}) is trivial.
Assume that $\|f-g\|_2\le1$. Whenever $\varepsilon\le 1$ we have
$\varepsilon^{-1} \le\varepsilon^{-2+\beta/p}$.
Let $\tau = (p-1)/p$.
Setting $\varepsilon = \|f-g\|^\alpha$ with $\alpha = (2+ \beta\tau)^{-1}$, we have
$$
\|\gamma\circ f^{-1} - \gamma\circ g^{-1}\|_{\rm TV}
\le C_2(p,k,d)R(f,g)\|f-g\|^{\theta}, \ \theta=\frac{1}{4k(d-1)+\tau},
$$
 where
\begin{multline*}
R(f,g) = \biggl(\int \Delta_f\, d\gamma\biggr)^{-\beta} +
\Bigl(\sum_{i=1}^k (\sigma^2_{f_i} + \sigma_{g_i}^2)\Bigr)^{k-1/2}
\\
 +
\Bigl(\sum_{i=1}^k (\sigma^2_{f_i} + \sigma_{g_i}^2)\Bigr)^{2k-1/2}
\biggl(\int \Delta_f\, d\gamma\biggr)^{-\beta/p}.
\end{multline*}
Since $|\sigma_{f_i} - \sigma_{g_i}|\le2\|f-g\|_2\le2$,
the desired estimate is proved.
\end{proof}

\begin{remark}
\rm
Theorem \ref{t4.3} yields an analog
of estimate (\ref{est1}) with the power of the $L^2$-norm equal to $1/(4k(d-1)+1+\tau)$.
Hence Theorem~\ref{t6.3} provides a better rate of convergence.
\end{remark}

\bibliographystyle{amsplain}

\end{document}